\def\plist@algorithm{Alg.\space}
\crefname{table}{tab.}{tabs.}
\Crefname{table}{Table}{Tables}
\newcommand{\Matlab}{\textsc{Matlab}}
\DeclareMathOperator{\rank}{rank}
\DeclareMathOperator*{\argmin}{argmin}
\DeclareMathOperator*{\argmax}{argmax}
\newcommand{\Stiefel}{\mathrm{St}}
\renewcommand{\vec}{\mathrm{vec}}
\newcommand{\mlin}[1]{\mbox{\tt{#1}}}
\algnewcommand{\algorithmicor}{\textbf{ or }}
\algnewcommand{\Or}{\algorithmicor}
\algnewcommand{\And}{\algorithmicand}
\newtheorem{theorem}{Theorem}[section]
\newtheorem{lemma}[theorem]{Lemma}%
\newtheorem{proposition}[theorem]{Proposition}%
\newtheorem{definition}[theorem]{Definition}%
\begin{document}

\title{A Riemannian rank-adaptive method for higher-order tensor completion in the tensor-train format}
\date{}
\author{
  Charlotte Vermeylen \and
  Marc Van Barel 
}

\maketitle

\begin{abstract}
In this paper a new Riemannian rank adaptive method (RRAM) is proposed for the low-rank tensor completion problem (LRTCP) formulated as a least-squares optimization problem on the algebraic variety of tensors of bounded tensor-train (TT) rank. The method iteratively optimizes over fixed-rank smooth manifolds using a Riemannian conjugate gradient algorithm from Steinlechner (2016) and gradually increases the rank by computing a descent direction in the tangent cone to the variety. Additionally, a numerical method to estimate the amount of rank increase is proposed based on a theoretical result for the stationary points of the low-rank tensor approximation problem and a definition of an estimated TT-rank. Furthermore, when the iterate comes close to a lower-rank set, the RRAM decreases the rank based on the TT-rounding algorithm from Oseledets (2011) and a definition of a numerical rank. We prove that the TT-rounding algorithm can be considered as an approximate projection onto the lower-rank set which satisfies a certain angle condition to ensure that the image is sufficiently close to that of an exact projection. Several numerical experiments are given to illustrate the use of the RRAM and its subroutines in {\Matlab}. Furthermore, in all experiments the proposed RRAM outperforms the state-of-the-art RRAM for tensor completion in the TT format from Steinlechner (2016) in terms of computation time.
\end{abstract}

\section{Introduction}\label{sec1}

We consider the \emph{low-rank tensor completion problem (LRTCP)} formulated as a least-squares optimization problem on the algebraic variety
$\mathbb{R}_{\le \left( k_1, \dots, k_{d-1} \right)}^{n_1 \times \dots \times n_d}$ of $n_1 \times \dots \times n_d$ real tensors of order $d$ and TT-rank at most $k:=\left( k_1, \dots, k_{d-1} \right)$ \cite[Definition 1.4]
{kutschan2018}:
\begin{equation} \label{eq:min_completion_TT}
\min_{X \in \mathbb{R}_{\le \left( k_1,\dots, k_{d-1} \right)}^{n_1 \times \cdots  \times n_d}} \underbrace{\frac{1}{2} \left\lVert X_{\Omega} - A_{\Omega} \right\rVert^2}_{=:f_\Omega(X)},
\end{equation}
where $A \in \mathbb{R}^{n_1 \times \cdots  \times n_d}$, $\Omega \subseteq \lbrace 1, \dots, n_1 \rbrace \times \cdots \times \lbrace 1, \dots, n_d \rbrace$ is called the sampling set,
\begin{equation} \label{eq:Z_Omega}
Z_{\Omega}\left(i_1,\dots,i_d\right) := \begin{cases}
Z\left(i_1,\dots,i_d \right)  & \text{if }\left(i_1,\dots,i_d \right) \in \Omega \\
0 & \text{otherwise} \\
\end{cases},
\end{equation}
for all $Z \in \mathbb{R}^{n_1 \times \cdots  \times n_d}$, and the norm is induced by the inner product\cite[Example~4.149]{Hackbusch}:
\begin{equation} \label{eq:inner_product}
\langle Y ,X \rangle = \langle \vec(Y) ,\vec(X) \rangle, \quad \forall~ X,Y \in \mathbb{R}^{n_1 \times \cdots  \times n_d}.
\end{equation}

A \emph{tensor-train decomposition (TTD)}\cite{Oseledets2011} of a tensor $X \in \mathbb{R}^{n_1 \times \dots \times n_d}$ is a factorization $X = X_1 \cdot X_2 \cdots X_{d-1} \cdot X_d$,
where $X_i \in \mathbb{R}^{r_{i-1} \times n_i \times r_i}$, and $r_0:=r_d:=1$. The `$\cdot$' indicates the multiplication of two tensors or a matrix and a tensor and more specifically the contraction between the last dimension of the first factor and the first dimension of the second factor: $X_{i} \cdot X_{i+1} := \left[ X_i^\mathrm{R} X_{i+1}^\mathrm{L} \right]^{r_{i-1} \times n_i \times n_{i+1} \times r_{i+1}}$,
where $X_i^\mathrm{R}$ and $X_{i+1}^\mathrm{L}$ are the right and left unfolding of a tensor respectively:
\begin{align*}
X_i^\mathrm{R} &:= \left[ X_i \right]^{r_{i-1} n_i \times r_i} := \mathrm{reshape}\left( X_i, r_{i-1} n_i \times  r_i \right), \\
X_{i+1}^\mathrm{L} &:= \left[ X_{i+1} \right]^{r_{i} \times n_{i+1} r_{i+1}}:= \mathrm{reshape}\left( X_{i+1}, r_{i} \times n_{i+1} r_{i+1} \right). 
\end{align*}
Remark that the left or right unfolding of a matrix is equal to the matrix itself.
An element in $X$ can thus be obtained as
\begin{equation*}
X(i_1, \dots, i_d) = \sum_{j_1=1}^{r_1} \cdots \sum_{j_{d-1}=1}^{r_{d-1}} X_1(i_1 ,j_1) X_2(j_1, i_2, j_2) \cdots X_d(j_{d-1}, i_d).
\end{equation*}
The minimal $r:=(r_1,\cdots, r_{d-1})$ for which a TTD of $X$ exists, is called the \emph{TT-rank} of $X$ or $\rank_{\mathrm{TT}} X$. For second-order tensors (matrices), the {TT-rank} reduces to the standard matrix rank. An advantage of the TTD is that the rank can be determined as the matrix rank of the unfoldings:
\begin{align} \label{eq:def_rank_TT}
r_i &:= \rank X^{<i>}, & X^{<i>} &:= \left[ X \right]^{n_1 \cdots n_{i} \times n_{i+1} \cdots n_d}.
\end{align}
A TTD of minimal rank can be obtained by computing successive SVDs of the unfoldings\cite[Algorithm 1]{Oseledets2011}. 
The set of TTDs of fixed rank $(r_1,\dots,r_{d-1})$ is known to be a \emph{smooth manifold} \cite[Lemma 4]{Holtz_manifolds_2012}:
\begin{equation} \label{eq:fixed_rank_manifold}
\mathbb{R}_{(r_1,\dots,r_{d-1})}^{n_1 \times \cdots \times n_d} := \left\lbrace X \in \mathbb{R}^{n_1 \times \cdots \times n_d} \mid \rank_{\mathrm{TT}} X = (r_1,\cdots, r_{d-1}) \right\rbrace,
\end{equation}
and the set of TTDs of bounded rank $k:=(k_1,\dots,k_{d-1})$ an algebraic variety \cite{kutschan2018}
\begin{equation} \label{eq:variety}
\mathbb{R}_{\le (k_1,\dots,k_{d-1})}^{n_1 \times \cdots \times n_d} := \left\lbrace X \in \mathbb{R}^{n_1 \times \cdots \times n_d} \mid \rank_{\mathrm{TT}} X \le (k_1,\dots,k_{d-1}) \right\rbrace,
\end{equation}
where the inequality applies element-wise.

Different Riemannian optimization methods on the smooth manifold have already been developed \cite{psenka2020second,cai2023tensor,manopt}. However, they require as input an adequate value for the TT-rank which is difficult to determine for most applications \cite{cai2023tensor,tensor_completioni_reg_TT_2020,Holtz2012}, and is therefore in general determined by trial-and-error. Futhermore, in practical LRTCPs, $A$ has usually full TT-rank due to noise. 



When $k$ is set too high however, the complexity of an algorithm to solve \eqref{eq:min_completion_TT} is unnecessarily high and furthermore overfitting can occur, i.e., $X$ approximates $A_\Omega$ well but not the full tensor $A$. To detect overfitting, usually a test data set $\Gamma$ is used \cite{Steinl_high_dim_TT_compl_2016}. When the error on this test set increases during optimization while the error of \eqref{eq:min_completion_TT} decreases overfitting has occurred and the algorithm should be stopped or the rank decreased. On the other hand, when $k$ is set too low, the search space may not contain a sufficiently good approximation of $A$\cite{Steinl_high_dim_TT_compl_2016,psenka2020second,TTcompletion_precond_kasai_2016,kressner2014low}. It is thus important to choose an adequate value for $k$.

The sampling ratio is defined as:
\begin{equation} \label{eq:rho_omega}
\rho_\Omega := \frac{\lvert \Omega \rvert}{n_1 \cdots n_d},
\end{equation}
where $\lvert \Omega \rvert$ denotes the number of elements in $\Omega$. The smaller $\rho_\Omega$, the more difficult it is to recover $A$ from $A_{\Omega}$ by solving \eqref{eq:min_completion_TT}. However, the minimal number of samples needed is not known \cite{budzinskiy2021tensor}.

In this paper, we propose a RRAM for higher-order tensor completion in the tensor-train format to resolve this difficulty. RRAMs are state-of-the-art methods that can be used to minimize a continuously differentiable function on a low-rank variety, a problem appearing, e.g., in low-rank matrix completion \cite{Zhou2016, gao2022riemannian}. The RRAMs in \cite{Zhou2016, gao2022riemannian} are developed for the set of bounded rank matrices and iteratively optimize over the smooth fixed-rank manifolds starting from a low initial rank. They increase the rank by performing a line search along a descent direction selected in the \emph{tangent cone} to the variety. This direction can be the projection of the negative gradient onto the tangent cone but does not need to; for instance, the RRAM developed by Gao and Absil \cite{gao2022riemannian} uses the projection of the negative gradient onto the part of the tangent cone that is normal to the tangent space. Additionally, they decrease the rank based on a truncated SVD when the fixed-rank algorithm converges to an element of a lower-rank set. This is possible because the manifold of fixed-rank matrices is not closed.

In this paper, we aim to generalize these RRAMs to the TT format. In this format, only the RRAM by Steinlechner is known to us from the literature \cite{Steinl_high_dim_TT_compl_2016}. This method is developed for high-dimensional tensor completion and has a random rank update mechanism in the sense that each TT-rank is increased subsequently by one by adding a small random term in the TTD of the current best approximation $X \in \mathbb{R}^{n_1 \times \cdots  \times n_d}_{(r_1, \dots, r_{d-1})}$:
\begin{align*}
X_{i}^\mathrm{R} &\gets \begin{bmatrix} X_{i}^\mathrm{R} & \varepsilon~ \texttt{randn} \left( r_{i-1}n_i \times 1\right) \end{bmatrix}, & X_{i+1}^\mathrm{L} \gets \begin{bmatrix} X_{i+1}^\mathrm{L} \\ \varepsilon~ \texttt{randn} \left( 1 \times n_{i+1} r_{i+1}  \right) \end{bmatrix},
\end{align*}
where $i \in \lbrace 1, \dots ,d-1 \rbrace$ and $\varepsilon$ is small, e.g., $10^{-8}$, such that $f_{\Omega}$ does not increase much, and \texttt{randn} is a built-in {\Matlab} function to generate normally distributed pseudo-random numbers.
The RRAM only terminates if a predefined maximal value for each $r_i$ is reached. 
Furthermore, no rank reduction step is included which makes the algorithm prone to overfitting. The full algorithm is available in the \textsc{Manopt} toolbox \cite{manopt}. 
 
We improve this RRAM by including a method to increase the TT-rank based on a descent direction in the tangent cone. The tangent cone is the set of all tangent vectors to the variety and is discussed in more detail in \Cref{sec:tangent_cone}. The full method to increase the rank is discussed in \Cref{sec:rank_incr_sec}. Furthermore, in \Cref{sec:rank_est_dim_d}, a numerical method is derived to determine how much the rank should be increased. Lastly, a method to decrease the rank is given in \Cref{sec:rank_red_gen}, which is necessary when the iterate comes close to a lower-rank set. This is possible because as for the manifold of fixed-rank matrices, the manifold of fixed-rank TTDs is not closed. This method can be considered as an \emph{approximate projection} to the lower-rank set. The approximate projection ensures that the image is sufficiently close to that of the true projection. Approximate projections are discussed in more detail in \Cref{sec:approx_proj}.

First, in \Cref{sec:prelim_RRAM} some preliminaries for the RRAM are given. Then, in \Cref{sec:rank_incr_sec} and \Cref{sec:rank_red_gen}, the methods to increase and decrease the rank are respectively proposed. Finally in \Cref{sec:RRAM_gen}, the full algorithm is given together with several numerical experiments to compare the proposed RRAM with the state-of-the-art RRAM \cite{Steinl_high_dim_TT_compl_2016}. 

\section{Preliminaries} \label{sec:prelim_RRAM}

In this section, we first give the general notation used in the rest of the paper and the preliminaries concerning the TTD. Afterwards, we introduce a compact notation to simplify the tensor expressions that are derived in the rest of the paper. Fourthly, the parametrization of the tangent cone is discussed \cite{kutschan2018}. We derive slightly different orthogonality conditions such that no matrix inverse is needed in the method to increase the rank, which improves the overall stability of the RRAM. Then, in \Cref{sec:LRTAP}, the auxiliary \emph{low-rank tensor approximation problem (LRTAP)} is defined.
Lastly, approximate projections are discussed. 

\subsection{Notation}

Tensors are denoted by capital letters. A matrix can be considered as a tensor of order two. Matrices are therefore also denoted by capital letters. On the other hand, scalars and vectors, which are zero- and one-dimensional `tensors' respectively, are denoted by lower-case letters. The order will always be clear from the context. 

\subsection{Properties tensor-train decomposition}

In this section, we review some properties of the TTD that are frequently used in the rest of the paper; we refer to the original paper \cite{Oseledets2011} and the subsequent works \cite{LR_tensor_methods_Steinlechner_2014,Steinl_high_dim_TT_compl_2016,Steinlechner_thesis2016} for more details. 

\begin{itemize}[leftmargin=*]
\item A TTD is not unique: 
$$X = \left( X_1 B_1 \right) \cdot \left( B_1^{-1} \cdot X_2 \cdot B_2 \right) \cdots \left( B_{d-2}^{-1} \cdot X_{d-1} \cdot B_{d-1} \right) \cdot \left( B_{d-1}^{-1} X_d \right),$$ 
where $B_i \in \mathrm{GL}(r_i)$, for $i=1, \dots, d-1$. Consequently, it can be proven that the dimension of the manifold in \eqref{eq:fixed_rank_manifold} is \cite{Holtz_manifolds_2012}
\begin{equation} \label{eq:dim_Rrn}
\mathrm{dim} \left( \mathbb{R}^{n_1 \times \cdots \times n_d}_{(r_1, \dots, r_{d-1})}\right) = \sum_{i=1}^d r_{i-1} n_i r_i - \sum_{i=1}^{d-1} r_i^2.
\end{equation}
Thus, similarly as for the SVD, \emph{orthogonality conditions} can be enforced to improve the numerical stability of algorithms working with TTDs \cite{Oseledets2011,Steinlechner_thesis2016,Steinl_high_dim_TT_compl_2016}.

\item To define orthogonality conditions, we let $\Stiefel(p, n) := \{U \in \mathbb{R}^{n \times p} \mid U^\top U = I_p\}$ denote the \emph{Stiefel manifold}, where $n, p \in \mathbb{N}$ with $n \ge p$. 

\item For every $U \in \Stiefel(p, n)$, we let $P_U := UU^\top$ and $P_U^\perp := I_n-P_U$
denote the orthogonal projections onto the range of $U$ and its orthogonal complement respectively.

\item A TTD is called \emph{i-orthogonal}, for $i \in \lbrace 1, \dots, d \rbrace$, if 
\begin{equation} \label{eq:i-orth}
X = X_1' \cdots X_{i-1}' \cdot \dot{X}_i \cdot X_{i+1}'' \cdots X_d'',
\end{equation}
where ${X_j'}^\mathrm{R} \in \Stiefel (r_j,r_{j-1}n_j)$, for $j=1,\dots, i-1$, and $\left({X_k''}^\mathrm{L}\right)^\top \in \Stiefel (r_{k-1},n_k r_{k})$, for $k=i+1,\dots, d$, and $\dot{X}_i \in \mathbb{R}^{r_{i-1} \times n_i \times r_i}$. 
The tensors $X_i''$ are called \emph{left-orthogonal} and the tensors $X_i'$ \emph{right-orthogonal}.
It holds that
\begin{align}  \label{eq:dotXifvXiQ}
\dot{X}_i^\mathrm{R} &= {X_i'}^\mathrm{R} Q_i, & \dot{X}_i^{\mathrm{L}} &= R_i {X_i''}^{\mathrm{L}},
\end{align}
for some $Q_i \in \mathrm{GL}(r_i)$ and $R_i \in \mathrm{GL}(r_{i-1})$. Thus, 
\begin{align} \label{eq:def_Qi_Ri_orth_X_dim_d}
Q_i &= {X_i'}^{\mathrm{R},\top} {\dot{X}_i}^{\mathrm{R}}, & R_i =\dot{X}_i^{\mathrm{L}} {X_i''}^{\mathrm{L},\top}.
\end{align}
and $X$ can be written as
\begin{equation} \label{eq:X_Qi}
\begin{split}
X &= \dot{X}_1 R_2^{-1} \cdot \dot{X}_2 \cdot R_3^{-1} \cdot \dot{X}_3 \cdots R_d^{-1} \dot{X}_d = \dot{X}_1 Q_1^{-1} \cdot \dot{X}_2 \cdot Q_2^{-1} \cdot \dot{X}_3 \cdots Q_{d-1}^{-1} \dot{X}_d, 
\end{split}
\end{equation}
and consequently $Q_i = R_{i+1}$, for $i=1 ,\dots, d-1$. We can derive a similar expression for $A \in \mathbb{R}_{(r_1', \dots, r_{d-1}')}^{n_1 \times \dots \times n_d}$:
\begin{equation} \label{eq:A_Qi}
\begin{split}
A &= \dot{A}_1 {Q_1'}^{-1} \cdot \dot{A}_2 \cdot {Q_2'}^{-1} \cdot \dot{A}_3 \cdots {Q_{d-1}'}^{-1} \dot{A}_d, 
\end{split}
\end{equation}
where $Q_i' = {A_i'}^{\mathrm{R},\top} {\dot{A}_i}^{\mathrm{R}}=\dot{A}_{i+1}^{\mathrm{L}} {A_{i+1}''}^{\mathrm{L},\top}$, and where the factors $\dot{A}_i$, $A_i'$, and $A_i''$ are defined in the same way as in \eqref{eq:i-orth}.

\item The unfoldings $X^{<i>}$, defined in \eqref{eq:def_rank_TT}, of a TTD $X=X_1 \cdots X_d$ can be rewritten as:
\begin{equation} \label{eq:LR_TTD}
\begin{split}
&X^{<i>} = \left( X_1 \cdots X_{i} \right)^\mathrm{R} \left( X_{i+1} \cdots X_{d} \right)^\mathrm{L}.
\end{split}
\end{equation}
From \eqref{eq:def_rank_TT} and \eqref{eq:LR_TTD} it can be deduced that
\begin{equation} \label{eq:rank_factors_TTD1}
\begin{split}
r_i &= \rank \left( X_1 \cdots X_{i} \right)^\mathrm{R} = \rank \left( X_{i+1} \cdots X_{d} \right)^\mathrm{L},
\end{split}
\end{equation}
which can be used to prove that the left and right unfolding of each factor $X_i$ has full rank $r_{i-1}$ and $r_i$ respectively.

\item Orthogonality between TTDs is exploited frequently in the rest of this paper, e.g., in the parametrization of the tangent cone discussed in \Cref{sec:tangent_cone}. Orthogonality is defined with respect to the inner product \eqref{eq:inner_product}.
Remark that the vectorization of an unfolding is equal to that of the tensor. Thus, if $Y = Y_1 \cdots Y_d$ and $Z = Z_1 \cdots Z_d$, and by using \eqref{eq:LR_TTD}, the inner product $\langle Y,Z \rangle$ is zero if at least one of the following equalities holds:
\begin{align} \label{eq:inner_prod_zero}
\left( \left( Y_1 \cdots Y_{i} \right)^\mathrm{R} \right)^\top \left( Z_1 \cdots Z_i \right)^\mathrm{R} &= 0, &\left( Y_{i+1} \cdots Y_d \right)^\mathrm{L} \left( \left( Z_{i+1} \cdots Z_d \right)^\mathrm{L} \right)^\top &= 0,
\end{align}
for $i=1, \dots, d-1$. 
\end{itemize}

\subsection{Compact notations} \label{sec:notation}
To simplify the expressions in the rest of the paper we introduce a compact notation for the reshape operator of a higher-order tensor to a third-order tensor:
\begin{align*}
Y^{<i,j>} &:= \left[ Y \right]^{n_1 \cdots n_{i} \times n_{i+1} \cdots n_j \times n_{j+1} \cdots n_d} := \mathrm{reshape}\left(Y,n_1 \cdots n_{i} \times n_{i+1} \cdots n_j \times n_{j+1} \cdots n_d \right),
\end{align*}
where $1 \leq i \leq j \leq d$. Remark that if $i=j$ or $j=d$ this notation is equal to $Y^{<i>}$ defined in \eqref{eq:def_rank_TT}.

Additionally, we let $X^{\mathrm{L},\top} := \left( X^{\mathrm{L}}\right)^{\top}$, and similarly for $X^{\mathrm{R},\top}$. Lastly, we define
\begin{align*}
X_{i:j} &:= X_i \cdots X_j \in \mathbb{R}^{r_{i-1} \times n_i \times \cdots \times n_j \times r_j}, &  1 \leq i &< j \leq d,
\end{align*}
Similarly,
\begin{align*}
X'_{i:j} &:= X_i' \cdots X_j' \in \mathbb{R}^{r_{i-1} \times n_i \times \cdots \times n_j \times r_j}, & 1 \leq i &< j \leq d-1, \\ 
X''_{i:j} &:= X_i'' \cdots X_j'' \in \mathbb{R}^{r_{i-1} \times n_i \times \cdots \times n_j \times r_j}, & 2 \leq i &< j \leq d.
\end{align*}
Remark that $\left( X''_{i:j} \right)^{\mathrm{L},\top} \in \Stiefel \left(r_{i-1},n_i \cdots n_j r_j \right)$ and $\left( X'_{i:j} \right)^{\mathrm{R}} \in \Stiefel \left(r_{j},r_{i-1} n_i \cdots n_j \right)$ for any $i$ and $j$ that satisfy the above bounds.

\subsection{Tangent cone}\label{sec:tangent_cone}
In the next lemma, we recall the parametrization of the tangent cone to the variety of TTDs of bounded TT-rank \cite{kutschan2018} with modified orthogonality conditions.

\begin{lemma}
\label{lemma:TangentConeLowRankVariety_dim_d}
Let $X \in \mathbb{R}_{(r_1, \dots, r_{d-1})}^{n_1 \times \cdots  \times n_d}$ as in \eqref{eq:i-orth}.
Then, $T_X \mathbb{R}_{\le (k_1, \dots, k_{d-1})}^{n_1 \times \cdots \times n_d}$ is the set of all tensors $G \in \mathbb{R}^{n_1 \times \cdots  \times n_d}$ that can be decomposed as
\begin{equation}
\label{eq:param_g_dim_d}
G = \begin{bmatrix} X_1'& U_1 & W_1 \end{bmatrix} \cdot
\begin{bmatrix}
X_2' & U_2 & W_2 \\ 
0 & Z_2 & V_2 \\
0 & 0 & X_2'' \\
\end{bmatrix} \cdots \begin{bmatrix}
X_{d-1}' & U_{d-1} & W_{d-1} \\ 
0 & Z_{d-1} & V_{d-1} \\
0 & 0 & X_{d-1}'' \\
\end{bmatrix} \cdot \begin{bmatrix} W_d \\ V_d \\ X_d'' \end{bmatrix},
\end{equation}
where $U_i \in \mathbb{R}^{r_{i-1} \times n_i \times s_i}$, $s_i := k_i-r_i$, for $i=1,\dots,{d-1}$, $W_i \in \mathbb{R}^{r_{i-1} \times n_i \times r_i}$, for $i=1,\dots,{d}$, $Z_i \in \mathbb{R}^{s_{i-1} \times n_i \times s_i}$, for $i=2,\dots,{d-1}$, $V_i \in \mathbb{R}^{s_{i-1} \times n_i \times r_i}$, for $i=2,\dots,{d}$, and
\begin{align}
\label{eq:orth_cond_mod_dim_d}
(U_i^{\mathrm{R}})^\top X_i'^{\mathrm{R}} &= 0, & (W_i^{\mathrm{R}})^\top X_i'^{\mathrm{R}} &= 0, & i&=1,\dots, d-1, \nonumber \\
V_i{^\mathrm{L}} (X_i''^{\mathrm{L}})^\top &= 0, &&& i&=2 ,\dots, d.
\end{align}
\end{lemma}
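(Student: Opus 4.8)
\emph{The plan} is to obtain the statement from Kutschan's parametrization of the tangent cone \cite{kutschan2018}, which already establishes that the block-triangular family \eqref{eq:param_g_dim_d} \emph{is} the tangent cone $T_X\mathbb{R}_{\le(k_1,\dots,k_{d-1})}^{n_1\times\cdots\times n_d}$, but states the admissibility of the blocks $U_i,W_i,V_i$ through orthogonality against the (non-orthonormal) cores $\dot X_i$ of a single orthogonalized TTD. Since the block shape and the sizes of all free factors coincide with ours, the entire content of the lemma is to show that the \emph{modified} conditions \eqref{eq:orth_cond_mod_dim_d}, phrased through the Stiefel factors $X_i'$ and $X_i''$, cut out exactly the same set of tensors $G$. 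I would therefore not reprove that the family is the tangent cone; I would only prove this equivalence and, along the way, make explicit that \eqref{eq:orth_cond_mod_dim_d} involves no matrix inverse.

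\emph{The key step} is a range/row-space identity supplied by \eqref{eq:dotXifvXiQ}. Because $\dot X_i^{\mathrm R}=X_i'^{\mathrm R}Q_i$ with $Q_i\in\mathrm{GL}(r_i)$, the columns of $\dot X_i^{\mathrm R}$ and of $X_i'^{\mathrm R}$ span the same subspace, so $(U_i^{\mathrm R})^\top X_i'^{\mathrm R}=0$ holds if and only if $U_i^{\mathrm R}$ is orthogonal to $\operatorname{range}\dot X_i^{\mathrm R}$, and likewise for $W_i$; dually, $\dot X_i^{\mathrm L}=R_iX_i''^{\mathrm L}$ with $R_i\in\mathrm{GL}(r_{i-1})$ shows that $V_i^{\mathrm L}(X_i''^{\mathrm L})^\top=0$ is equivalent to orthogonality of $V_i^{\mathrm L}$ against the row space of $\dot X_i^{\mathrm L}$. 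Hence the admissible blocks are unchanged, while the Stiefel form makes the associated projector $I-X_i'^{\mathrm R}(X_i'^{\mathrm R})^\top$ explicit, avoiding the Gram inverse that the condition stated through $\dot X_i$ would require. To match the \emph{values} of $G$ when Kutschan's diagonal blocks carry $\dot X_i$ rather than $X_i'$ and $X_i''$, I would absorb the invertible transition matrices $Q_i,R_i$ of \eqref{eq:def_Qi_Ri_orth_X_dim_d} --- which relate the two orthogonalizations of $X$ through \eqref{eq:X_Qi} --- into the free parameters $U_i,Z_i,V_i,W_i$. As this is an invertible linear reparametrization that respects the orthogonality subspaces identified above, it is a bijection of the two parameter tuples and therefore preserves the image, completing the reduction.

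\emph{A self-contained alternative} would prove the two inclusions directly. For the inclusion $\supseteq$, given $G$ of the form \eqref{eq:param_g_dim_d} I would exhibit sequences $Y_n\in\mathbb{R}_{\le(k_1,\dots,k_{d-1})}^{n_1\times\cdots\times n_d}$ with $t_n\downarrow 0$ and $(Y_n-X)/t_n\to G$, realizing each generating path of the block product; the cores of $Y_n$ have format $k_{i-1}\times n_i\times k_i$, so $\TTrank Y_n\le(k_1,\dots,k_{d-1})$ automatically. For the inclusion $\subseteq$, I would take an arbitrary $G=\lim_n(Y_n-X)/t_n$, write each $Y_n$ as a mixed left/right-orthogonal TTD of format $k$, split every core along the $r_i$--$s_i$ partition, and pass to the limit; differentiating the Stiefel relations $(X_i'^{\mathrm R})^\top X_i'^{\mathrm R}=I$ and $X_i''^{\mathrm L}(X_i''^{\mathrm L})^\top=I$ together with \eqref{eq:rank_factors_TTD1} forces the block-triangular shape and the orthogonality relations \eqref{eq:orth_cond_mod_dim_d}.

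\emph{The main obstacle} lies in this self-contained route, specifically in the inclusion $\supseteq$. A rank-increasing direction, such as a path $U_jZ_{j+1}\cdots Z_{l-1}V_l$ opening the $s$-channel, is \emph{not} the derivative at $t=0$ of a smooth curve obtained by uniform $t$-scaling of the new blocks: such scaling makes every $s$-channel contribution quadratic, and it is annihilated by its rank-$r$ neighbours at the boundary, leaving only the fixed-rank tangent space. One must instead use the contingent-cone definition with asymmetric scaling --- turning on the \emph{entry} of the $s$-channel at order $t_n$ while keeping its content at order one --- and must simultaneously represent the base point $X$ consistently across the $X_i'$ and $X_i''$ orthogonalizations, which is again governed by the invertible factors $Q_i,R_i$. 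Because the reduction route via \cite{kutschan2018} bypasses exactly this construction, I would present that route as the proof and relegate the explicit sequences to a remark.
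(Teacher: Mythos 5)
Your proposal takes essentially the same route as the paper: quote Kutschan's parametrization of the tangent cone and absorb the invertible gauge matrices $Q_i$ from \eqref{eq:def_Qi_Ri_orth_X_dim_d} into the free parameters, which turns the lower diagonal blocks into the left-orthogonal cores $X_i''$ and makes the orthogonality conditions inverse-free. One caveat: Kutschan's third orthogonality condition is not stated against the single core $\dot X_i$ as you assume, but against the full tail products, namely $\bigl( \dot{V}_i \cdot X_{(i+1):(d-1)}' \cdot \dot{X}_d \bigr)^\mathrm{L} \bigl( \bigl( X_{i:(d-1)}' \cdot \dot{X}_d \bigr)^\mathrm{L} \bigr)^\top = 0$, so after the reparametrization you still need the (routine but necessary) step that the left-orthonormal tail $\bigl(X_{(i+1):d}''\bigr)^{\mathrm L}$ cancels against its transpose in the Kronecker-factored product, reducing the condition to $V_i^{\mathrm L}\bigl(X_i''^{\mathrm L}\bigr)^\top=0$; this is the main computation in the paper's proof and is missing from yours.
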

\begin{proof}
$T_X \mathbb{R}_{\le (k_1, \dots, k_{d-1})}^{n_1 \times \cdots \times n_d}$ is the set of all $G \in \mathbb{R}^{n_1 \times \cdots \times n_d}$ that can be decomposed as \cite[Theorem 2.6]{kutschan2018}
\begin{equation*}
\begin{split}
G &= \begin{bmatrix} X_1' & {U}_1 & \dot{W}_1 \end{bmatrix} \cdot
\begin{bmatrix}
X_2' & {U}_2 & \dot{W}_2 \\ 
0 & Z_2 & \dot{V}_2 \\
0 & 0 & X_2' \\
\end{bmatrix} \cdots \begin{bmatrix}
X_{d-1}' & {U}_{d-1} & \dot{W}_{d-1} \\ 
0 & {Z}_{d-1} & \dot{V}_{d-1} \\
0 & 0 & X_{d-1}' \\
\end{bmatrix} \cdot \begin{bmatrix} {W}_d \\ {V}_d \\ \dot{X}_d \end{bmatrix}, \\
\end{split}
\end{equation*}
with the orthogonality conditions \cite[Theorem 2.6]{kutschan2018}
\begin{align} \label{eq:orth_original_dim_d}
\left( X_i'^{\mathrm{R}} \right)^\top {U}_i^\mathrm{R}&= 0, & \left( X_i'^{\mathrm{R}} \right)^\top \dot{W}_i^\mathrm{R} &=0, & i&=1,\dots, d-1, \nonumber \\
\left( \dot{V}_i \cdot X_{(i+1):(d-1)}' \cdot \dot{X}_d \right)^\mathrm{L} \left( \left( X_{i:(d-1)}' \cdot \dot{X}_d \right)^\mathrm{L} \right)^\top &= 0, & & & i &= 2,\dots, d.
\end{align} 
The following invariances hold for all invertible matrices $B_i$ in $\mathrm{GL}(r_{i})$, for $i=1,\dots, d-1$,
\begin{equation*}
\begin{split}
G =\begin{bmatrix} X_1' & {U}_1 & \dot{W}_1 B_1 \end{bmatrix} \cdot
&\begin{bmatrix}
X_2' & {U}_2 & \dot{W}_2 \cdot B_2 \\ 
0 & {Z}_2 & \dot{V}_2 \cdot B_2 \\
0 & 0 & B_1^{-1} \cdot X_2' \cdot B_2 \\
\end{bmatrix} \cdots \\
&\begin{bmatrix} X_{d-1}' & {U}_{d-1} & \dot{W}_{d-1} \cdot B_{d-1} \\ 
0 & {Z}_{d-1} & \dot{V}_{d-1} \cdot B_{d-1} \\
0 & 0 & B_{d-2}^{-1} \cdot X_{d-1}' \cdot B_{d-1} \\
\end{bmatrix} \cdot \begin{bmatrix} {W}_d \\ {V}_d \\ B_{d-1}^{-1} \dot{X}_d \end{bmatrix}. 
\end{split}
\end{equation*}
Because this holds for all invertible $B_i$, it also holds for the matrices $Q_i$ defined in \eqref{eq:def_Qi_Ri_orth_X_dim_d}, and thus because of \eqref{eq:X_Qi}, $X_i'' := Q_{i-1}^{-1} \cdot X_i'\cdot Q_{i}$, for $i=2,\dots, d$, where $Q_d:=I_{n_d}$. Then, we define $W_i := \dot{W}_i Q_{i}$ and $V_i := \dot{V}_i Q_{i}$, for $i=1,\dots, d-1$, and obtain the parametrization \eqref{eq:param_g_dim_d}.
The second set of orthogonality conditions in \eqref{eq:orth_original_dim_d} changes to
\begin{align*}
\left( X_i'^{\mathrm{R}} \right)^\top \dot{W}_i^\mathrm{R} &= 0 \Leftrightarrow \left( X_i'^{\mathrm{R}} \right)^\top \dot{W}_i^\mathrm{R} Q_{i} =0 \Leftrightarrow \left( X_i'^{\mathrm{R}} \right)^\top {W}_i^\mathrm{R} = 0, & i&=1,\dots, d,
\end{align*}
and the third changes to
\begin{align*}
\left( \dot{V}_i \cdot X_{(i+1):(d-1)}' \cdot \dot{X}_d \right)^\mathrm{L} \left( \left( X_{i:(d-1)}' \cdot \dot{X}_d \right)^\mathrm{L} \right)^\top = 0 \Leftrightarrow \left( \dot{V}_i \cdot Q_i \cdot X_{(i+1):d}''\right)^\mathrm{L} \left( \left( Q_{i-1} \cdot X_{i:d}'' \right)^\mathrm{L} \right)^\top &= 0 \\
\Leftrightarrow \left( {V}_i \cdot X_{(i+1):d}''\right)^\mathrm{L} \left( \left(  X_{i:d}''\right)^\mathrm{L} \right)^\top Q_{i-1}^\top &= 0 \\
\Leftrightarrow {V}_i^\mathrm{L} \left( \left( X_{(i+1):d}''\right)^\mathrm{L} \otimes I_{n_i} \right) \left( \left( X_{(i+1):d}''\right)^{\mathrm{L},\top} \otimes I_{n_i} \right) \left(X_{i}''\right)^{\mathrm{L},\top} &= 0 \\
\Leftrightarrow {V}_i^\mathrm{L} \left(X_{i}''\right)^{\mathrm{L},\top} &= 0,
\end{align*} 
for $i = 2,\dots, d$. The first set of orthogonality conditions in \eqref{eq:orth_original_dim_d} is left unchanged.
\end{proof}

Thus, when \eqref{eq:param_g_dim_d} is expanded, a sum of $\frac{d(d+1)}{2}$ mutual orthogonal terms is obtained: 
\begin{align} \label{eq:G_expanded_d}
G = X_{1:(d-1)}' \cdot W_d ~+ &\cdots + W_1 \cdot X_{2:d}''~ + \nonumber \\
X_{1:(d-2)}' \cdot U_{d-1} \cdot V_d ~ + &\cdots + 
U_1 \cdot V_2 \cdot X_{3:d}''~+ \nonumber\\
 X_{1:(d-3)}' \cdot U_{d-2} \cdot Z_{d-1} \cdot V_d~ + &\cdots + U_1 \cdot Z_2 \cdot V_3 \cdot X_{4:d}''~+ \\
&~~ \vdots \nonumber \\
+~ U_1 \cdot Z_2 &\cdots Z_{d-1} \cdot V_d. \nonumber
\end{align}
The tangent space is then defined as
\begin{equation} \label{eq:tangent_space_dim_d}
\begin{split}
T_X\mathbb{R}_{(r_1, \dots r_{d-1})}^{n_1 \times \cdots \times n_d} := \Big\lbrace 
~&X_{1:(d-1)}' \cdot W_d ~+ \cdots ~ +~ W_1 \cdot X_{2:d}'' \Bigm\vert \mathrm{s.t. }~ W_i^{\mathrm{R},\top} {X_i'}^\mathrm{R} = 0,~ i=1, \dots, d-1~  \Big\rbrace.  \\
\end{split}
\end{equation}
Remark that the orthogonality conditions of the tangent space can easily be changed to 
\begin{align} \label{eq:orth_cond_tangent_space_dim_d}
W_i^{\mathrm{R},\top} {X_i'}^\mathrm{R} &= 0, & i&=1, \dots, j-1, \\
W_i^{\mathrm{L}} {X_i''}^\mathrm{L,\top}&= 0, &  i&=j+1, \dots, d, \nonumber
\end{align}
for any $j \in \lbrace 1,\dots,d \rbrace$. For example, we can decompose $W_d$ in \eqref{eq:tangent_space_dim_d} as $W_d = \dot{W}_d X_d'' + \hat{W}_d$, where $\hat{W}_d := W_d \left(I - {X_d''}^\top X_d''\right) $ and $\dot{W}_d := W_d {X_d''}^\top$. Consequently $X_d''\hat{W_d}^\top =0$, and we can regroup the terms involving $W_{d-1}$ and $W_{d}$ as:
\begin{equation*}
\begin{split}
&X_{1:(d-2)}' \cdot W_{d-1} \cdot X_d'' + X_{1:(d-1)}' \cdot W_{d} = X_{1:(d-2)}' \cdot \left( W_{d-1} + X_{d-1}' \cdot \dot{W}_d \right) \cdot X_d''+ X_{1:(d-1)}' \cdot \hat{W}_d .
\end{split}
\end{equation*}
And thus, if we define $\tilde{W}_{d-1}:= W_{d-1} + X_{d-1}' \cdot \dot{W}_d$, then the modified parameterization of the tangent space with parameters $W_1, \dots, W_{d-2},\tilde{W}_{d-1},\hat{W}_d$ satisfies \eqref{eq:orth_cond_tangent_space_dim_d} for $j=d-1$. This process can be applied recursively to obtain \eqref{eq:orth_cond_tangent_space_dim_d} for any $j$.

As for third-order tensors, the projection onto the tangent space is easy and well known \cite{Steinl_high_dim_TT_compl_2016,Lubich_TT_time_int_2015}. For the parametrization in \Cref{lemma:TangentConeLowRankVariety_dim_d}, the parameters $W_i$, $i=1,\dots, d$, of $\mathcal{P}_{T_X\mathbb{R}_{(r_1, \dots, r_{d-1})}^{n_1 \times \cdots \times n_d}} Y$ with $Y \in \mathbb{R}^{n_1 \times \cdots n_d}$ are:
\begin{align} \label{eq:param_tan_space_dim_d}
W_i^\mathrm{R} &= P_{{X_i'}^\mathrm{R}}^\perp \left(\left( X_{1:(i-1)}' \right)^{\mathrm{R},\top} \cdot Y ^{<i >} \cdot \left( X_{(i+1):d}'' \right)^{\mathrm{L},\top} \right)^\mathrm{R}, & i&=1,\dots, d-1 \nonumber \\
W_d &= \left( X_{1:(d-1)}'\right)^{\mathrm{R},\top} Y^\mathrm{R}. 
\end{align}
Or using the orthogonality conditions \eqref{eq:orth_cond_tangent_space_dim_d}:
\begin{align} \label{eq:param_tan_space_dim_d_gen}
W_i^\mathrm{R} &= P_{{X_i'}^\mathrm{R}}^\perp \left(\left( X_{1:(i-1)}' \right)^{\mathrm{R},\top} \cdot Y ^{<i >} \cdot \left( X_{(i+1):d}'' \right)^{\mathrm{L},\top} \right)^\mathrm{R}, & i&=1,\dots, j-1, \nonumber \\
W_j &= \left( X_{1:(j-1)}' \right)^{\mathrm{R},\top} \cdot Y ^{<j >} \cdot \left( X_{(j+1):d}'' \right)^{\mathrm{L},\top}, \\
W_i^\mathrm{L} &=  \left(\left( X_{1:(i-1)}' \right)^{\mathrm{R},\top} \cdot Y ^{<i >} \cdot \Big( X_{(i+1):d}'' \Big)^{\mathrm{L},\top} \right)^\mathrm{L} P_{{X_i''}^{\mathrm{L},\top}}^\perp, & i&=j+1,\dots, d \nonumber.
\end{align}

\subsection{Low-rank tensor approximation problem} \label{sec:LRTAP}

We define the auxiliary \emph{low-rank tensor approximation problem (LRTAP)} as:
\begin{align} \label{eq:low-rank_approx}
&\min_{X \in \mathbb{R}_{\le (r_1,\dots, r_{d-1})}^{n_1 \times \cdots  \times n_d}} \underbrace{ \frac{1}{2} \lVert X - A  \rVert^2}_{=: f(X)}, & &A \in \mathbb{R}^{n_1 \times \cdots  \times n_d},
\end{align}
This problem is related to the LRTCP \eqref{eq:min_completion_TT} because $f_\Omega(X) = f(X)$ for $\Omega = \lbrace 1, \dots, n_1 \rbrace \times \cdots \times \lbrace 1, \dots, n_d \rbrace$. Remark that, as for \eqref{eq:min_completion_TT}, a global minimizer is, in general, not unique because $\mathbb{R}_{\le (k_1,\dots, k_{d-1})}^{n_1 \times \cdots  \times n_d}$ is non-convex and NP-hard to obtain \cite{hillar2013most}. This problem is used in \Cref{sec:rank_est_dim_d,sec:rank_red_gen}.


\subsection{Approximate projection} \label{sec:approx_proj}
To develop a RRAM, we need to reduce the rank when the iterate comes close to a lower-rank set. Ideally, we want to solve:
\begin{align} \label{eq:proj_variety_general}
&\mathcal{P}_{\mathbb{R}^{n_1 \times \cdots  \times n_d}_{\leq (r_1,\dots, r_{d-1})}}Y := \argmin_{X \in \mathbb{R}^{n_1 \times \cdots  \times n_d}_{\leq (r_1,\dots, r_{d-1})}} \lVert Y - X \rVert^2, & Y &\in \mathbb{R}^{n_1 \times \cdots  \times n_d}_{(r_1',\dots, r_{d-1}')},
\end{align}
where $\lVert \cdot \rVert$ is again the norm induced by \eqref{eq:inner_product} and $r_{i} \leq r_i'$, for $i=1,\dots,d-1$. Since the low-rank variety is a closed cone, it holds that \cite[Proposition~A.6]{LevinKileelBoumal2022}
\begin{align} \label{eq:nec_cond_approx_proj}
\big\langle Y-\hat{Y},\hat{Y} \big\rangle=0 \Leftrightarrow \big\langle Y,\hat{Y} \big\rangle = \big\lVert \hat{Y} \big\rVert^2, 
\end{align}
for all $Y \in \mathbb{R}^{n_1 \times \cdots  \times n_d}_{(r_1',\dots, r_{d-1}')}$ and $\hat{Y} \in \mathcal{P}_{\mathbb{R}^{n_1 \times \cdots  \times n_d}_{\leq (r_1,\dots, r_{d-1})}} Y$.
Thus, \eqref{eq:proj_variety_general} can be rewritten as
\begin{equation*} 
\begin{split}
\mathcal{P}_{\mathbb{R}^{n_1 \times \cdots  \times n_d}_{\leq (r_1,\dots, r_{d-1})}} Y
= \argmin_{\substack{X \in \mathbb{R}^{n_1 \times \cdots  \times n_d}_{\leq (r_1,\dots, r_{d-1})} \\ \langle Y, X \rangle = \|X\|^2}} \|Y-X\|^2 &= \argmin_{\substack{X \in \mathbb{R}^{n_1 \times \cdots  \times n_d}_{\leq (r_1,\dots, r_{d-1})} \\ \langle Y, X \rangle = \|X\|^2}} - 2 \langle Y, X \rangle + \|X\|^2 \\
&= \argmax_{\substack{X \in \mathbb{R}^{n_1 \times \cdots  \times n_d}_{\leq (r_1,\dots, r_{d-1})} \\ \langle Y, X \rangle = \|X\|^2}} \|X\| \\
&= \argmax_{\substack{X \in \mathbb{R}^{n_1 \times \cdots  \times n_d}_{\leq (r_1,\dots, r_{d-1})} \\ \left\langle Y, X \right\rangle = \|X\|^2}} \left\langle Y, \frac{X}{\|X\|} \right\rangle,
\end{split}
\end{equation*}
or
\begin{align}
\label{eq:proj_max_reform_gen2}
\big\lVert \hat{Y} \big\rVert = \max_{\substack{X \in \mathbb{R}^{n_1 \times \cdots  \times n_d}_{\leq (r_1,\dots, r_{d-1})} \\ \left\langle Y, X \right\rangle = \|X\|^2}} \left\langle Y, \frac{X}{\|X\|} \right\rangle = \max_{\substack{X \in \mathbb{R}^{n_1 \times \cdots  \times n_d}_{\leq (r_1,\dots, r_{d-1})} \\ \left\langle Y, X \right\rangle = \|X\|^2}} \|X\|,
\end{align}
for all $\hat{Y} \in \mathcal{P}_{\mathbb{R}^{n_1 \times \cdots  \times n_d}_{\leq (r_1,\dots, r_{d-1})}} Y$, and thus all elements of $\mathcal{P}_{\mathbb{R}^{n_1 \times \cdots  \times n_d}_{\leq (r_1,\dots, r_{d-1})}} Y$ have the same norm. However, as both varieties are highly non-convex, solving \eqref{eq:proj_max_reform_gen2} is non-trivial. That is why we search for an \emph{approximate projection}, i.e., a set-valued mapping:
$$\tilde{\mathcal{P}}_{\mathbb{R}^{n_1 \times \cdots  \times n_d}_{\leq (r_1,\dots, r_{d-1})}} : \mathbb{R}^{n_1 \times \cdots  \times n_d}_{(r_1',\dots, r_{d-1}')} \multimap \mathbb{R}^{n_1 \times \cdots  \times n_d}_{\leq (r_1,\dots, r_{d-1})},$$ 
such that there exists $\omega \in (0, 1]$ such that, for all $Y \in \mathbb{R}^{n_1 \times \cdots  \times n_d}_{(r_1',\dots, r_{d-1}')}$ and all $\tilde{Y} \in \tilde{\mathcal{P}}_{\mathbb{R}^{n_1 \times \cdots  \times n_d}_{\leq (r_1,\dots, r_{d-1})}} Y$,
\begin{equation}
\label{eq:ApproximateProjectionAngleCondition}
\left\langle Y, \frac{\tilde{Y}}{\big\lVert \tilde{Y} \big\rVert} \right\rangle \ge \omega \Big\lVert \mathcal{P}_{\mathbb{R}^{n_1 \times \cdots  \times n_d}_{\leq (r_1,\dots, r_{d-1})}}Y \Big\rVert .
\end{equation}
Inequality~\eqref{eq:ApproximateProjectionAngleCondition} is called an \emph{angle condition} \cite[Definition~2.5]{schneider2015convergence}; it is well defined since, as $\mathbb{R}^{n_1 \times \cdots  \times n_d}_{\leq (r_1,\dots, r_{d-1})}$ is a closed cone, all elements of $\mathcal{P}_{\mathbb{R}^{n_1 \times \cdots  \times n_d}_{\leq (r_1,\dots, r_{d-1})}} Y$ have the same norm. 
If the approximate projection is chosen such that all $\tilde{Y} \in \tilde{\mathcal{P}}_{\mathbb{R}^{n_1 \times \cdots  \times n_d}_{\leq (r_1,\dots, r_{d-1})}} Y$ satisfy $\big\langle Y,\tilde{Y}\big\rangle = \big\lVert \tilde{Y} \big\rVert^2$, then the angle condition simplifies to
\begin{equation}
\label{eq:ApproximateProjectionAngleCondition2}
\big\lVert \tilde{Y} \big\rVert \ge \omega \Big\lVert \mathcal{P}_{\mathbb{R}^{n_1 \times \cdots  \times n_d}_{\leq (r_1,\dots, r_{d-1})}}Y \Big\rVert.
\end{equation}
We use this simplified angle condition in \Cref{sec:rank_red_gen}.

\section{Methodology}
In this section, our main contributions to the RRAM -- the method to increase and decrease the rank -- are given. For both methods numerical experiments are given to illustrate the use and added value of the methods. 

\subsection{Rank increase} \label{sec:rank_incr_sec}

In \Cref{sec:approx_proj_gen}, we propose a simplified projection onto the tangent cone to determine the search direction. Afterwards, in \Cref{sec:rank_est_dim_d} we propose a method to determine an adequate value for the upper bound on the rank for the LRTCP. This involves a theoretical result for the LRTAP. We define an \emph{estimated rank} to extend this result to the LRTCP. Then in \Cref{sec:exp_rank_est_gen}, some experiments are given to illustrate the use of the rank estimation method. Lastly, in \Cref{sec:rank_incr} we give the full method that is used in the RRAM to increase the rank.
 
\subsubsection{Search direction in the tangent cone} \label{sec:approx_proj_gen}
Inspired by \cite{gao2022riemannian}, the proposed RRAM uses a search direction selected in the normal part of the tangent cone  with respect to the tangent space, which is the closed cone
\begin{equation*}
T_X^\perp\mathbb{R}_{\le (k_1, \dots, k_{d-1})}^{n_1 \times \cdots \times n_d}  = T_X\mathbb{R}_{\le (k_1, \dots ,k_{d-1})}^{n_1 \times \cdots \times n_d}  \cap \left(T_X\mathbb{R}_{(r_1, \dots, r_{d-1})}^{n_1 \times \cdots \times n_d} \right)^\perp,
\end{equation*}
to increase the rank. However, as can be seen from \eqref{eq:G_expanded_d}, the terms in the normal part of the tangent cone are highly non-linear and non-convex. In \cite{approx_proj_3D}, an approximate projection onto the tangent cone for third-order tensors was proposed. However, for higher-order tensors, such an approximate projection gets increasingly complex and computationally expensive. Thus, we propose to project iteratively onto the subcones
\begin{equation*}
\begin{split}
T_X^\perp \mathbb{R}_{\le (r_1, \dots, r_{i-1}, k_i, r_{i+1}, \dots, r_{d-1})}^{n_1 \times \cdots \times n_d} 
= \Big\lbrace& X_{1:(i-1)}' \cdot U_{i} \cdot V_{i+1} \cdot X_{(i+2):d}'' \Bigm\vert \mathrm{s.t.}~ U_i^{\mathrm{R},\top}X_i'^{\mathrm{R}}=0,~V_{i+1}^{\mathrm{L}}{X_{i+1}''}^{\mathrm{L},\top}=0 \Big\rbrace,
\end{split}
\end{equation*} 
for $i=1, \dots ,d-1$. This tangent cone is written more shortly as $T_X^\perp \mathbb{R}_{\le (r_{1:(i-1)}, k_i, r_{(i+1):d})}^{n_1 \times \cdots \times n_d}$. The projection is much easier as each element is only a second-order function in the parameters. The projection is given in the following proposition.

\begin{proposition} \label{prop:proj_normal_part} Let $X \in \mathbb{R}_{\le (r_1, \dots ,r_{d-1})}^{n_1 \times \cdots \times n_d}$ as in \eqref{eq:i-orth} and $Y \in \mathbb{R}^{n_1 \times \cdots \times n_d}$. A projection onto the tangent cone:
\begin{align*}
\mathcal{P}_{T_X^\perp \mathbb{R}_{\le (r_{1:(i-1)}, k_i, r_{(i+1):d} )}^{n_1 \times \cdots \times n_d}}&: \mathbb{R}^{n_1 \times \cdots \times n_d} \multimap T_X^\perp \mathbb{R}_{\le (r_{1:(i-1)}, k_i, r_{(i+1):d})}^{n_1 \times \cdots \times n_d}: Y \rightarrow \hat{Y},
\end{align*}
where $i \in \lbrace 1,\dots,d-1 \rbrace$, and 
$$\hat{Y} = X_{1:(i-1)}' \cdot U_i \cdot V_{i+1} \cdot X_{(i+2):d}'' \in \mathcal{P}_{T_X^\perp \mathbb{R}_{\le (r_{1:(i-1)}, k_i, r_{(i+1):d})}^{n_1 \times \cdots \times n_d}} Y,$$
is given by 
\begin{align} \label{eq:trunc_SVD_Pi}
\big[ U_i^\mathrm{R},S,V \big] &= \mathrm{SVD}_{s_i} \left( P_i \left( X,Y \right) \right), & V_{i+1}^\mathrm{L} &= S V^\top,
\end{align}
where $s_i:=k_i - r_i$, and
\begin{align} \label{eq:Pi}
&P_i(X,Y) :=  P_{X_i'{^\mathrm{R}}}^\perp \left[\left(X_{1:(i-1)}'\right)^{\mathrm{R},\top} Y^{<i-1,i+1>} \left( X_{(i+2):d}'' \right)^{\mathrm{L},\top} \right]^{r_{i-1}n_i \times n_{i+1} r_{i} } P_{{X_{i+1}''}^{\mathrm{L},\top}}^\perp.
\end{align}
\end{proposition}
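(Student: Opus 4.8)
The plan is to reduce the minimization of $\lVert Y-\hat Y\rVert^2$ over the subcone $T_X^\perp \mathbb{R}_{\le (r_{1:(i-1)}, k_i, r_{(i+1):d})}^{n_1 \times \cdots \times n_d}$ to a single matrix nearest-point problem in the ``core'' matrix $C := U_i^\mathrm{R} V_{i+1}^\mathrm{L} \in \mathbb{R}^{r_{i-1}n_i \times n_{i+1}r_{i+1}}$ and then invoke the Eckart--Young--Mirsky theorem. Abbreviate the outer factors by $L := (X_{1:(i-1)}')^\mathrm{R} \in \Stiefel(r_{i-1},n_1\cdots n_{i-1})$ and $R := (X_{(i+2):d}'')^\mathrm{L}$, for which $L^\top L = I_{r_{i-1}}$ and $RR^\top = I_{r_{i+1}}$ by the Stiefel properties in \Cref{sec:notation}. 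By the definition of the contraction ``$\cdot$'', the central factor $U_i\cdot V_{i+1}$ reshaped to $r_{i-1}n_i\times n_{i+1}r_{i+1}$ is exactly $C$, so a generic cone element $\hat Y = X_{1:(i-1)}'\cdot U_i\cdot V_{i+1}\cdot X_{(i+2):d}''$ is the expansion of the core $C$ by $L$ on the left and by $R$ on the right.

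First I would prove that $C\mapsto\hat Y$ is an isometry. Using \eqref{eq:LR_TTD} with the split at position $i$, $\hat Y^{<i>} = (X_{1:(i-1)}'\cdot U_i)^\mathrm{R}(V_{i+1}\cdot X_{(i+2):d}'')^\mathrm{L}$; a short computation using $L^\top L = I$ shows $(X_{1:(i-1)}'\cdot U_i)^{\mathrm{R},\top}(X_{1:(i-1)}'\cdot U_i)^\mathrm{R} = U_i^{\mathrm{R},\top}U_i^\mathrm{R}$, and symmetrically $(V_{i+1}\cdot X_{(i+2):d}'')^\mathrm{L}(V_{i+1}\cdot X_{(i+2):d}'')^{\mathrm{L},\top} = V_{i+1}^\mathrm{L}V_{i+1}^{\mathrm{L},\top}$. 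Substituting into $\lVert\hat Y\rVert^2 = \lVert\hat Y^{<i>}\rVert_F^2 = \trace\big((X_{1:(i-1)}'\cdot U_i)^{\mathrm{R},\top}(X_{1:(i-1)}'\cdot U_i)^\mathrm{R}(V_{i+1}\cdot X_{(i+2):d}'')^\mathrm{L}(V_{i+1}\cdot X_{(i+2):d}'')^{\mathrm{L},\top}\big)$ gives $\lVert\hat Y\rVert^2 = \lVert C\rVert_F^2$. For the linear term, the expansion map $C\mapsto\hat Y$ has as adjoint the contraction $Y\mapsto\Pi_i := \big[(X_{1:(i-1)}')^{\mathrm{R},\top}\,Y^{<i-1,i+1>}\,(X_{(i+2):d}'')^{\mathrm{L},\top}\big]^{r_{i-1}n_i\times n_{i+1}r_{i+1}}$, that is, the matrix inside \eqref{eq:Pi} before the two outer projections, whence $\langle Y,\hat Y\rangle = \langle\Pi_i,C\rangle_F$.

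Next I would translate the orthogonality constraints into the core. The condition $U_i^{\mathrm{R},\top}{X_i'}^\mathrm{R}=0$ places the columns of $U_i^\mathrm{R}$ in $\mathrm{range}\,P_{{X_i'}^\mathrm{R}}^\perp$, so $P_{{X_i'}^\mathrm{R}}^\perp C = C$, and $V_{i+1}^\mathrm{L}{X_{i+1}''}^{\mathrm{L},\top}=0$ gives $CP_{{X_{i+1}''}^{\mathrm{L},\top}}^\perp = C$; together with $\rank C\le s_i$ (the inner dimension of $U_i^\mathrm{R}V_{i+1}^\mathrm{L}$), the feasible cores are exactly the matrices of rank at most $s_i$ satisfying $C = P_{{X_i'}^\mathrm{R}}^\perp C P_{{X_{i+1}''}^{\mathrm{L},\top}}^\perp$. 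By self-adjoint idempotence of the projections, $\langle\Pi_i,C\rangle_F = \langle P_{{X_i'}^\mathrm{R}}^\perp\Pi_iP_{{X_{i+1}''}^{\mathrm{L},\top}}^\perp,C\rangle_F = \langle P_i(X,Y),C\rangle_F$ for every feasible $C$, so completing the square yields
\[
\lVert Y-\hat Y\rVert^2 = \lVert Y\rVert^2 - 2\langle P_i(X,Y),C\rangle_F + \lVert C\rVert_F^2 = \lVert Y\rVert^2 - \lVert P_i(X,Y)\rVert_F^2 + \lVert C - P_i(X,Y)\rVert_F^2 .
\]

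It then remains to minimize $\lVert C - P_i(X,Y)\rVert_F^2$ over matrices of rank at most $s_i$, which by Eckart--Young--Mirsky is solved by the truncated SVD $C = \mathrm{SVD}_{s_i}(P_i(X,Y))$; writing this as $U_i^\mathrm{R}SV^\top$ and setting $V_{i+1}^\mathrm{L}:=SV^\top$ reproduces \eqref{eq:trunc_SVD_Pi}. Feasibility is then automatic: since $P_i(X,Y) = P_{{X_i'}^\mathrm{R}}^\perp P_i(X,Y) P_{{X_{i+1}''}^{\mathrm{L},\top}}^\perp$, its leading left and right singular vectors lie in $\mathrm{range}\,P_{{X_i'}^\mathrm{R}}^\perp$ and $\mathrm{range}\,P_{{X_{i+1}''}^{\mathrm{L},\top}}^\perp$ respectively, so the Eckart--Young minimizer already satisfies both orthogonality conditions and hence lies in the cone. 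I expect the only real obstacle to be the bookkeeping in the isometry step, namely tracking the TT unfoldings and reshape conventions to certify $\lVert\hat Y\rVert^2=\lVert C\rVert_F^2$ and $\langle Y,\hat Y\rangle=\langle\Pi_i,C\rangle_F$; once these two identities are secured, the reduction to Eckart--Young is routine.
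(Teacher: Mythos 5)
Your proof is correct and follows essentially the same route as the paper's: both exploit the orthonormality of the fixed outer factors to reduce the problem to the core matrix $U_i^{\mathrm{R}} V_{i+1}^{\mathrm{L}}$, identify $P_i(X,Y)$ as the only component of $Y$ visible to feasible cores, and conclude with a truncated SVD of $P_i(X,Y)$. The difference is only in packaging: you complete the square and invoke Eckart--Young directly, whereas the paper routes the same reduction through the cone-projection reformulation \eqref{eq:proj_max_reform_gen2} and an explicit orthogonal decomposition of $Y^{<i-1,i+1>}$ into five mutually orthogonal terms.
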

\begin{proof}
Because of \eqref{eq:proj_max_reform_gen2}, $\hat{Y} \in \mathcal{P}_{T_X^\perp \mathbb{R}_{\le (r_{1:(i-1)}, k_i, r_{(i+1):d})}^{n_1 \times \cdots \times n_d}}$ maximizes the following inner product
\begin{align*}
\Big\lbrace U_i, V_{i+1} \Big\rbrace &= \argmin_{\substack{U,V \\ }}
\Big\langle Y, X_{1:(i-1)}' \cdot U \cdot V \cdot X_{(i+2):d}'' \Big\rangle,
\end{align*}
s.t. $\big\langle Y, X_{1:(i-1)}' \cdot U \cdot V \cdot X_{(i+2):d}'' \big\rangle = \big\lVert X_{1:(i-1)}' \cdot U \cdot V \cdot X_{(i+2):d}'' \big\rVert^2$ for all $X \in \mathbb{R}_{\le (r_1, \dots, r_{d-1})}^{n_1 \times \cdots \times n_d}$. 
Furthermore, any tensor $Y$ can be decomposed as
\begin{equation} \label{eq:Y_4terms_dim_d}
\begin{split}
Y^{<i-1,i+1>} =~ &P_{\left(X_{1:(i-1)}'\right)^{\mathrm{R}}} Y^{<i-1,i+1>} P_{\left(X_{(i+2):d}''\right)^{\mathrm{L},\top}} + P_{\left(X_{1:(i-1)}'\right)^{\mathrm{R}}}^\perp Y^{<i-1,i+1>} P_{\left(X_{(i+2):d}''\right)^{\mathrm{L},\top}} \\
&+ Y^{<i-1,i+1>} P_{\left(X_{(i+2):d}''\right)^{\mathrm{L},\top}}^\perp.
\end{split}
\end{equation}
The first term can further be decomposed as 
\begin{equation*}
\begin{split}
&P_{\left(X_{1:(i-1)}'\right)^{\mathrm{R}}} Y^{<i-1,i+1>} P_{\left(X_{(i+2):d}''\right)^{\mathrm{L},\top}} = \left(X_{1:(i-1)}'\right)^\mathrm{R} \Bigg[\\
  P_{X_i'{^\mathrm{R}}}^\perp &\left[ \left(X_{1:(i-1)}'\right)^{\mathrm{R},\top} Y^{<i-1,i+1>}  \left(X_{(i+2):d}''\right)^{\mathrm{L},\top} \right]^{r_{i-1}n_i \times n_{i+1} r_{i+1} } P_{{X_{i+1}''}^{\mathrm{L},\top}}^\perp +\\
 P_{X_i'{^\mathrm{R}}} &\left[ \left(X_{1:(i-1)}'\right)^{\mathrm{R},\top} Y^{<i-1,i+1>} \left(X_{(i+2):d}''\right)^{\mathrm{L},\top} \right]^{r_{i-1}n_i \times n_{i+1} r_{i+1} } P_{{X_{i+1}''}^{\mathrm{L},\top}}^\perp+\\
 &\left[ \left(X_{1:(i-1)}'\right)^{\mathrm{R},\top} Y^{<i-1,i+1>} \left(X_{(i+2):d}''\right)^{\mathrm{L},\top} \right]^{r_{i-1}n_i \times n_{i+1} r_{i+1} }  P_{{X_{i+1}''}^{\mathrm{L},\top}} \Bigg]^{r_{i-1} \times n_i n_{i+1} \times r_{i+i}} \left(X_{(i+2):d}''\right)^\mathrm{L},
\end{split}
\end{equation*}
which is a sum of three mutually orthogonal terms. If we insert this in \eqref{eq:Y_4terms_dim_d}, $Y$ can be written as the sum of five mutually orthogonal terms. Thus, the inner product with $\hat{Y} = X_{1:(i-1)}' \cdot U_i \cdot V_{i+1} \cdot X_{(i+2):d}''$ is
\begin{equation*}
\begin{split}
\big\langle \hat{Y},Y \big\rangle &= \Big\langle \hat{Y}^{<i-1,i+1>} , \left(X_{1:(i-1)}'\right)^\mathrm{R} P_i(X,Y) \left(X_{(i+2):d}''\right)^{\mathrm{L},\top} \Big\rangle  = \Big\langle \left[ U_i \cdot V_{i+1} \right]^{r_{i-1}n_i \times n_{i+1} r_{i} } ,  P_i(X,Y) \Big\rangle,
\end{split}
\end{equation*}
where we used the fact that the multiplication with the orthogonal matrices $\big(X_{1:(i-1)}'\big)^\mathrm{R}$ and $\big(X_{(i+2):d}''\big)^{\mathrm{L},\top}$ on the left and right of both terms does not change the inner product.
Because $\left[ U_i \cdot V_{i+1} \right]^{r_{i-1}n_i \times n_{i+1} r_{i} }$ is just a matrix of rank $s_i$, the optimal $U_i$ and $V_{i+1}$ can for example be obtained by the truncated SVD of $P_i(X,Y)$ as in \eqref{eq:trunc_SVD_Pi}.
\end{proof}

\subsubsection{Rank estimation} \label{sec:rank_est_dim_d}
In this subsection, a method to estimate how much the rank should be increased is proposed. This method is a generalization of the rank estimation method proposed for third-order tensors in \cite{rank_est_3D}. First, a theoretical result for the LRTAP is given and afterwards, we discuss how we use this result in the LRTCP.

\begin{theorem} \label{thm:local_min_best_approx_gen}
Let $X \in \mathbb{R}_{(r_1, \dots, r_{d-1})}^{n_1 \times \cdots \times n_d}$ and $A \in \mathbb{R}_{(r_1', \dots, r_{d-1}')}^{n_1 \times \dots \times n_d}$ with orthogonalizations as in \eqref{eq:i-orth}. If $\mathcal{P}_{T_{X}\mathbb{R}_{(r_1, \dots, r_{d-1})}^{n_1 \times \cdots \times n_d }} \nabla f \left(X \right)=0$, then 
\begin{align} \label{eq:Xi_local_min_dim_d}
\dot{X}_i &= B_i^\top \cdot \dot{A}_i \cdot C_i, & i &= 1, \dots, d,
\end{align}
where $B_1=I_{n_1}$, $C_d=I_{n_d}$, and
\begin{align*}
B_i^\top &= \left( X_{1:(i-1)}' \right)^{\mathrm{R},\top} \left( A_{1:(i-1)}'  \right)^{\mathrm{R}}, & C_i &= \left( A_{(i+1):d}''  \right)^{\mathrm{L}} \left( X_{(i+1):d}''  \right)^{\mathrm{L},\top}.
\end{align*}
And using \eqref{eq:def_Qi_Ri_orth_X_dim_d} and \eqref{eq:A_Qi}:
\begin{align} \label{eq:XvsA_local_min_dim_d}
X_i' &= B_{i}^\top \cdot A_i' \cdot Q_i' C_i Q_i^{-1}, & i &= 1, \dots, d-1, \nonumber \\
X_i'' &= Q_{i-1}^{-1} B_{i}^\top Q_{i-1}' \cdot A_i'' \cdot C_i, & i &= 2, \dots, d.
\end{align}
Thus, if we define the matrices
\begin{align*}
D_i &:= C_i Q_i^{-1} B_{i+1}^\top Q_{i}', & E_i &:= Q_i' C_i Q_i^{-1} B_{i+1}^\top, & i&=1,\dots,d-1,
\end{align*}
$X$ can be written as a function of the factors of $A$ as follows
\begin{align*}
X &= {A}'_1 E_1 \cdots A_{i-1}' \cdot E_{i-1} \cdot \dot{A}_{i} \cdot D_{i} \cdot A_{i+1}'' \cdots D_{d-1} A_d'', & i & \in \lbrace 1, \dots,d \rbrace.
\end{align*}
Furthermore,
$B_2 = Q_1' C_1 Q_1^{-1} \in \Stiefel(r_1,r_1')$, $C_{d-1} = \left( Q_{d-1}^{-1} B_{d}^\top Q_{d-1}' \right)^\top \in \Stiefel(r_{d-1},r_{d-1}')$, and $D_i$ and $E_i$ have $r_{i}$ times eigenvalue 1 and the other $r_{i}' - r_{i}$ eigenvalues are zero for all i. More specifically, 
\begin{align} \label{eq:Di_Ci_I}
B_{i+1}^\top Q_{i}' C_i Q_i^{-1} = Q_i^{-1} B_{i+1}^\top Q_i' C_i  = I_{r_i},
\end{align}
and thus $B_{i+1}^\top Q_{i}'$ and $C_i Q_i^{-1}$ are respectively right and left eigenvectors of $D_i$ corresponding to eigenvalue 1 and the same holds for $Q_i^{-1} B_{i+1}^\top$, $Q_i' C_i$, and $E_i$. 
\end{theorem}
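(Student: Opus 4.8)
The plan is to read the hypothesis as a best-approximation statement and then peel off the structure core by core. Since $f(X)=\tfrac12\lVert X-A\rVert^2$, the Euclidean gradient is $\nabla f(X)=X-A$, so the assumption $\mathcal{P}_{T_X\mathbb{R}_{(r_1,\dots,r_{d-1})}^{n_1\times\cdots\times n_d}}\nabla f(X)=0$ states exactly that $X-A$ is orthogonal to the tangent space $T_X\mathbb{R}_{(r_1,\dots,r_{d-1})}^{n_1\times\cdots\times n_d}$. Because the fixed-rank manifold is a cone we have $X\in T_X\mathbb{R}_{(r_1,\dots,r_{d-1})}^{n_1\times\cdots\times n_d}$, so the condition is equivalent to $X=\mathcal{P}_{T_X\mathbb{R}_{(r_1,\dots,r_{d-1})}^{n_1\times\cdots\times n_d}}A$, i.e. $\langle X-A,G\rangle=0$ for every tangent vector $G$. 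This is the form I would exploit.

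To obtain \eqref{eq:Xi_local_min_dim_d} I would test this orthogonality against one core direction at a time. Fix $i$ and use the parametrization \eqref{eq:tangent_space_dim_d} in the gauge \eqref{eq:orth_cond_tangent_space_dim_d} with $j=i$, where the parameter $W_i$ is unconstrained and contributes the single term $X_{1:(i-1)}'\cdot W_i\cdot X_{(i+1):d}''$. Writing $X$ and $A$ in their $i$-orthogonal forms \eqref{eq:i-orth} and using that the frames $(X_{1:(i-1)}')^{\mathrm R}$ and $(X_{(i+1):d}'')^{\mathrm L,\top}$ are Stiefel, the two inner products collapse to $\langle \dot X_i,W_i\rangle$ and $\langle B_i^\top\dot A_i C_i,W_i\rangle$ with $B_i,C_i$ as defined; orthogonality for all $W_i$ forces $\dot X_i=B_i^\top\dot A_i C_i$, and the empty products give $B_1=I_{n_1}$, $C_d=I_{n_d}$. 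Equation \eqref{eq:XvsA_local_min_dim_d} then follows by rewriting this core identity for the right and left unfoldings via \eqref{eq:dotXifvXiQ} and the analogous relations \eqref{eq:A_Qi} for $A$ (substitute $\dot X_i^{\mathrm R}=X_i'^{\mathrm R}Q_i$, $\dot A_i^{\mathrm R}=A_i'^{\mathrm R}Q_i'$, and their left counterparts, then solve for $X_i'$ and $X_i''$); inserting the result into $X=X_{1:(i-1)}'\cdot\dot X_i\cdot X_{(i+1):d}''$ and collecting the gauge factors produces the $D_i,E_i$ representation. These steps are routine unfolding algebra.

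The technical heart is \eqref{eq:Di_Ci_I}, and I expect this to be the main obstacle: showing that $B_{i+1}$, $C_i$, $Q_i$, $Q_i'$ compose to the identity. My plan is to avoid a direct telescoping of the $X_{1:i}'$ frames (which does not simplify, since $B_iB_i^\top$ is only a projection for $i\ge 2$) and instead to recognize the relevant product as a Gram matrix. From \eqref{eq:XvsA_local_min_dim_d} one has $X_i'^{\mathrm R}=(B_i^\top\otimes I_{n_i})A_i'^{\mathrm R}(Q_i'C_iQ_i^{-1})$; expanding $B_{i+1}^\top=(X_{1:i}')^{\mathrm R,\top}(A_{1:i}')^{\mathrm R}$, peeling off the last factor, and using that $A_{1:(i-1)}'$ is right-orthogonal, one finds that $B_{i+1}^\top\,Q_i'C_iQ_i^{-1}$ equals $(X_i'^{\mathrm R})^\top X_i'^{\mathrm R}$, which is $I_{r_i}$ by the right-orthonormality of $X_i'$. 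Left-multiplying by $Q_i^{-1}$ and right-multiplying by $Q_i$ then gives the second identity in \eqref{eq:Di_Ci_I}. The delicate part is purely the bookkeeping of the Kronecker factors that makes this product visibly a Gram matrix.

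The eigenstructure is then a formal consequence. Writing $D_i=ab$ and $E_i=a'b'$ with $a=C_iQ_i^{-1}$, $b=B_{i+1}^\top Q_i'$, $a'=Q_i'C_i$, $b'=Q_i^{-1}B_{i+1}^\top$, identity \eqref{eq:Di_Ci_I} reads $ba=I_{r_i}$ and $b'a'=I_{r_i}$, whence $D_i^2=a(ba)b=D_i$ and $E_i^2=E_i$, so both are idempotent of rank exactly $r_i$ (as $ba$ is an invertible $r_i\times r_i$ block); hence each has eigenvalue $1$ with multiplicity $r_i$ and eigenvalue $0$ with multiplicity $r_i'-r_i$, and the one-eigenvectors are read off from $D_ia=a$, $bD_i=b$ (and the analogues for $E_i$). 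Finally, specializing to $i=1$, where $B_1=I_{n_1}$, collapses \eqref{eq:Di_Ci_I} to $B_2^\top B_2=I_{r_1}$, i.e. $B_2=Q_1'C_1Q_1^{-1}\in\Stiefel(r_1,r_1')$; the mirror computation starting from the left-orthonormality of $X_d''$ yields $C_{d-1}\in\Stiefel(r_{d-1},r_{d-1}')$, completing the proof.
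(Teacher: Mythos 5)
Your proposal is correct and follows essentially the same route as the paper: you extract $\dot X_i = B_i^\top\cdot\dot A_i\cdot C_i$ from the vanishing of the gradient's tangent-space component in the gauge $j=i$ (the paper does this via the explicit projection formula \eqref{eq:param_tan_space_dim_d_gen}, you do it variationally against arbitrary $W_i$ — the same computation), and you prove \eqref{eq:Di_Ci_I} exactly as the paper does, by telescoping $B_{i+1}^\top = X_i'^{\mathrm{R},\top}\left(B_i^\top\cdot A_i'\right)^{\mathrm{R}}$ against $X_i'^{\mathrm{R}} = \left(B_i^\top\cdot A_i'\right)^{\mathrm{R}}Q_i'C_iQ_i^{-1}$ to get the Gram matrix $X_i'^{\mathrm{R},\top}X_i'^{\mathrm{R}} = I_{r_i}$. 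Only cosmetic differences remain (your idempotency argument for the eigenstructure is a slightly cleaner packaging of the paper's rank argument, and the Kronecker factor in your recursion should be $I_{n_i}\otimes B_i^\top$ rather than $B_i^\top\otimes I_{n_i}$ under the paper's reshaping convention), so there is no substantive gap.
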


\begin{proof}
From \eqref{eq:param_tan_space_dim_d}, and because $\mathcal{P}_{T_{X}\mathbb{R}_{(r_1, \dots, r_{d-1})}^{n_1 \times \cdots \times n_d }} \nabla f \left(X \right)=0$:
\begin{align}  \label{eq:cond_rank_est_dim_d_grad_Wi}
P_{{X_i'}^\mathrm{R}}^\perp \left(\left( X_{1:(i-1)}' \right)^{\mathrm{R},\top} \cdot A^{<i >} \cdot \left( X_{(i+1):d}''  \right)^{\mathrm{L},\top} \right)^\mathrm{R} &= 0, & i&=1,\dots, d-1 \nonumber \\
\dot{X}_d - \left( X_{1:(d-1)}'  \right)^{\mathrm{R},\top} A^\mathrm{R} &= 0. 
\end{align}
Using the last equation, we have:
\begin{equation*}
\dot{X}_d = \left( X_{1:(d-1)}'  \right)^{\mathrm{R},\top} \left( A_{1:(d-1)}' \right)^{\mathrm{R}} \dot{A}_d.
\end{equation*}
The factor $B_{d}^\top := \left( X_{1:(d-1)}'  \right)^{\mathrm{R},\top} \left( A_{1:(d-1)}' \right)^{\mathrm{R}}$ is a full rank matrix of dimension $r_{d-1} \times r_{d-1}'$ and using \eqref{eq:def_Qi_Ri_orth_X_dim_d} and \eqref{eq:A_Qi}:
\begin{equation} \label{eq:Xd_Ad_orth}
X_d'' = Q_{d-1}^{-1} B_{d}^\top Q_{d-1}' A_d'',
\end{equation}
satisfying \eqref{eq:XvsA_local_min_dim_d} and because $X_d''$ are $A_d''$ are orthogonal matrices, it holds that $\left( Q_{d-1}^{-1} B_{d}^\top Q_{d-1}' \right)^\top \in \Stiefel (r_{d-1},r_{d-1}')$. 

Now, we make use of the fact that for any of the orthogonality conditions in \eqref{eq:orth_cond_tangent_space_dim_d}, the parameters $W_j$ in \eqref{eq:param_tan_space_dim_d_gen} have to be zero. Suppose that $j = d-1$, then
\begin{align*}
\dot{X}_{d-1} - \left( X_{1:(d-2)}' \right)^{\mathrm{R},\top} \left( A_{1:(d-2)}' \right)^{\mathrm{R}} \cdot \dot{A}_{d-1} \cdot A_d'' {X_d''}^{\top} &= 0, 
\end{align*}
where again the factor $B_{d-1}^\top :=\left( X_{1:(d-2)}' \right)^{\mathrm{R},\top} \left( A_{1:(d-2)}' \right)^{\mathrm{R}}$ is a full rank matrix of size $r_{d-2} \times r_{d-2}'$. And using \eqref{eq:Xd_Ad_orth}, this equation can be simplified to
\begin{align*}
\dot{X}_{d-1} = B_{d-1}^\top \cdot \dot{A}_{d-1} \cdot \left( Q_{d-1}^{-1} B_{d}^\top Q_{d-1}' \right)^\top.
\end{align*}
Thus, 
$C_{d-1}=\left( Q_{d-1}^{-1} B_{d}^\top Q_{d-1}' \right)^\top$ and since $\left( Q_{d-1}^{-1} B_{d}^\top Q_{d-1}' \right)^\top \in \Stiefel (r_{d-1},r_{d-1}')$, $C_{d-1} Q_{d-1}^{-1} B_{d}^\top Q_{d-1}'$ has indeed $r_{d-1}$ times eigenvalue 1 and $r_{d-1}'-r_{d-1}$ times eigenvalue zero. Also for the other values of $j$, it directly follows that \eqref{eq:Xi_local_min_dim_d} holds by setting $W_j$ to zero in \eqref{eq:param_tan_space_dim_d_gen}. 

To prove that $D_i$ and $E_i$ have $r_{i}$ times eigenvalue 1 for all $i$, we use the fact that
\begin{align} \label{eq:Bi_ifv_Bi-1}
B_i^\top &= {X_{i-1}'}^{\mathrm{R},\top} \left( I_{n_{i-1}} \otimes \left(X_{1:(i-2)}'  \right)^{\mathrm{R},\top} \right) \left(  I_{n_{i-1}} \otimes \left(A_{1:(i-2)}'  \right)^{\mathrm{R}} \right) {A_{i-1}'}^{\mathrm{R}} \nonumber \\
&= {X_{i-1}'}^{\mathrm{R},\top} \left( I_{n_{i-1}} \otimes  B_{i-1}^\top \right) {A_{i-1}'}^{\mathrm{R}} = {X_{i-1}'}^{\mathrm{R},\top} \left( B_{i-1}^\top \cdot {A_{i-1}'} \right)^{\mathrm{R}}.
\end{align}
And thus,
\begin{align*}
D_{i-1} &=
C_{i-1} Q_{i-1}^{-1} {X_{i-1}'}^{\mathrm{R},\top} \left( B_{i-1}^\top \cdot {A_{i-1}'} \right)^{\mathrm{R}} Q_{i-1}'.
\end{align*}
If we multiply this matrix on the right with $C_{i-1} Q_{i-1}^{-1}$, we obtain
\begin{align*}
D_{i-1} C_{i-1} Q_{i-1}^{-1} =~&
C_{i-1} Q_{i-1}^{-1} {X_{i-1}'}^{\mathrm{R},\top} \left( B_{i-1}^\top \cdot {A_{i-1}'} \right)^{\mathrm{R}} Q_{i-1}' C_{i-1} Q_{i-1}^{-1} \\
=~& C_{i-1} Q_{i-1}^{-1} {X_{i-1}'}^{\mathrm{R},\top} {X_{i-1}'}^{\mathrm{R}} = C_{i-1} Q_{i-1}^{-1},
\end{align*}
and thus $C_{i-1} Q_{i-1}^{-1}$ are $r_{i-1}$ eigenvectors of $D_{i-1}$ with eigenvalue $1$. Since the matrix $D_{i-1}$ has rank $r_{i-1}$, the other eigenvalues have to be zero. Lastly, the eigenvalues of 
$$Q_{i-1}' D_{i-1} \left(Q'_{i-1}\right)^{-1} = Q_{i-1}' C_{i-1} Q_{i-1}^{-1} B_{i}^\top =E_{i-1}$$
are the same as those of $D_{i-1}$.
\end{proof}

The following proposition gives a formula for the rank difference between $X^{<i>}$ and $A^{<i>}$ in the case where the Riemannian gradient of the LRTAP at $X$ is zero.  This rank difference thus enables to determine the TT-rank of $A$.

\begin{proposition} \label{prop:rank_est_dim_d}
If the same conditions as in \Cref{thm:local_min_best_approx_gen} hold, then
\begin{align*}
&r_i'-r_i = \rank \left(\left[\left( X_{1:(i-1)}' \right)^{\mathrm{R},\top} \cdot \nabla f(X)^{<i-1,i+1 >} \cdot \left( X_{(i+2):d}''  \right)^{\mathrm{L},\top} \right]^{r_{i-1}n_i \times n_{i+1} r_{i+1}} \right),
\end{align*}
for $i=2,\dots,d-2$, and where $r_0:= r_d:=1$.
\end{proposition}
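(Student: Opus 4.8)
The plan is to write $\nabla f(X)=X-A$ and to evaluate the central two-core block
$$M_i := \left[(X_{1:(i-1)}')^{\mathrm{R},\top}(X-A)^{<i-1,i+1>}(X_{(i+2):d}'')^{\mathrm{L},\top}\right]^{r_{i-1}n_i\times n_{i+1}r_{i+1}}$$
directly, splitting it as $M_i=M_i^X-M_i^A$. Since $(X_{1:(i-1)}')^{\mathrm{R}}$ and $\big((X_{(i+2):d}'')^{\mathrm{L}}\big)^\top$ have orthonormal columns, contracting $X$ against them leaves only the product of the two central cores, so in the $i$-orthogonal form $M_i^X=\dot X_i^{\mathrm{R}}X_{i+1}''^{\mathrm{L}}$; this is a product of a full-column-rank factor and a full-row-rank factor, hence has rank exactly $r_i$. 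Writing $A$ in the same left/right orthogonal form produces the overlap matrices $B_i^\top=(X_{1:(i-1)}')^{\mathrm{R},\top}(A_{1:(i-1)}')^{\mathrm{R}}$ and $C_{i+1}=(A_{(i+2):d}'')^{\mathrm{L}}(X_{(i+2):d}'')^{\mathrm{L},\top}$ of \Cref{thm:local_min_best_approx_gen}, yielding $M_i^A=\hat B_i\,\dot A_i^{\mathrm{R}}A_{i+1}''^{\mathrm{L}}\hat C_{i+1}$, where the reshape turns $B_i^\top$ and $C_{i+1}$ into the Kronecker factors $\hat B_i=B_i^\top\otimes I_{n_i}$ and $\hat C_{i+1}=I_{n_{i+1}}\otimes C_{i+1}$.

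Next I would feed in the stationarity relations of \Cref{thm:local_min_best_approx_gen}. Equations~\eqref{eq:Xi_local_min_dim_d}--\eqref{eq:XvsA_local_min_dim_d} give $\dot X_i^{\mathrm{R}}=\hat B_i\dot A_i^{\mathrm{R}}C_i$ and $X_{i+1}''^{\mathrm{L}}=(Q_i^{-1}B_{i+1}^\top Q_i')A_{i+1}''^{\mathrm{L}}\hat C_{i+1}$, so that $M_i^X=\hat B_i\dot A_i^{\mathrm{R}}\,C_iQ_i^{-1}B_{i+1}^\top Q_i'\,A_{i+1}''^{\mathrm{L}}\hat C_{i+1}=\hat B_i\dot A_i^{\mathrm{R}}D_iA_{i+1}''^{\mathrm{L}}\hat C_{i+1}$ with $D_i=C_iQ_i^{-1}B_{i+1}^\top Q_i'$. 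Subtracting, $M_i=\hat B_i\dot A_i^{\mathrm{R}}(D_i-I_{r_i'})A_{i+1}''^{\mathrm{L}}\hat C_{i+1}$. From \eqref{eq:Di_Ci_I} one reads off $(Q_i^{-1}B_{i+1}^\top Q_i')C_i=I_{r_i}$, whence $D_i^2=D_i$; thus $D_i$ is an idempotent of rank $r_i$ and $D_i-I_{r_i'}=-(I_{r_i'}-D_i)$ is an idempotent of rank $r_i'-r_i$. This already gives the upper bound $\rank M_i\le r_i'-r_i$.

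For the matching lower bound I would argue that none of the flanking factors collapses the rank of the central idempotent. The inner factors are harmless: $\dot A_i^{\mathrm{R}}$ has full column rank $r_i'$ and $A_{i+1}''^{\mathrm{L}}$ full row rank $r_i'$ (full-rank TT cores, \eqref{eq:rank_factors_TTD1}, together with the $\Stiefel$ property), so $N:=\dot A_i^{\mathrm{R}}(D_i-I_{r_i'})A_{i+1}''^{\mathrm{L}}$ has rank exactly $r_i'-r_i$. It then remains to show $\rank(\hat B_i N\hat C_{i+1})=\rank N$, i.e.\ that $\hat B_i$ is injective on $\mathrm{col}(N)\subseteq\dot A_i^{\mathrm{R}}\ker D_i$ and that $\hat C_{i+1}$ does not annihilate the row space of $N$. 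I expect this to be the \emph{main obstacle}, because $\hat B_i$ and $\hat C_{i+1}$ are built from the non-square overlaps $B_i^\top$ (rank $r_{i-1}$) and $C_{i+1}$ (rank $r_{i+1}$) and are not partial isometries in general. The way through is to exploit that $\hat B_i\dot A_i^{\mathrm{R}}$ reproduces the full-rank unfolding $X_i'^{\mathrm{R}}=\hat B_i\dot A_i^{\mathrm{R}}C_iQ_i^{-1}$ on $\mathrm{range}(D_i)$, and to combine this with the $\Stiefel$ conclusions $B_2\in\Stiefel(r_1,r_1')$, $C_{d-1}\in\Stiefel(r_{d-1},r_{d-1}')$ and the alignment forced by stationarity, so as to extend injectivity of $\hat B_i$ to the complementary directions $\ker D_i$; here the hypothesis $i\in\{2,\dots,d-2\}$ is exactly what guarantees that $X_{1:(i-1)}'$, $X_{(i+2):d}''$ and $D_i$ are all defined, so no boundary term intervenes.

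A cleaner alternative for the lower bound is to invoke the stationarity conditions \eqref{eq:cond_rank_est_dim_d_grad_Wi} at centers $i$ and $i+1$, which force the mixed blocks $P_{X_i'^{\mathrm{R}}}M_iP_{X_{i+1}''^{\mathrm{L},\top}}^{\perp}$ and $P_{X_i'^{\mathrm{R}}}^{\perp}M_iP_{X_{i+1}''^{\mathrm{L},\top}}$ to vanish. Then $M_i$ splits into the two blocks $P_{X_i'^{\mathrm{R}}}^{\perp}M_iP_{X_{i+1}''^{\mathrm{L},\top}}^{\perp}$ and $P_{X_i'^{\mathrm{R}}}M_iP_{X_{i+1}''^{\mathrm{L},\top}}$, which have mutually orthogonal row and column spaces, so their ranks add; since $M_i^X$ lies entirely in the second block, the bookkeeping reduces to comparing $\rank M_i^A=r_i'$ with $\rank M_i^X=r_i$ through $\rank(M_i^A-M_i^X)\ge\rank M_i^A-\rank M_i^X$, which together with the upper bound closes the argument once $\rank M_i^A=r_i'$ has been secured — the latter being again the same rank-preservation fact identified above.
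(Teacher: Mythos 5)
Your proposal follows essentially the same route as the paper's proof: write $\nabla f(X)=X-A$, contract with the orthonormal left and right interface matrices, invoke the stationarity relations of \Cref{thm:local_min_best_approx_gen} to express the resulting two-core block as $\hat B_i\, \dot A_i^{\mathrm{R}}\left(D_i-I_{r_i'}\right){A_{i+1}''}^{\mathrm{L}}\hat C_{i+1}$ with $D_i$ idempotent of rank $r_i$, and read off $\rank\left(I_{r_i'}-D_i\right)=r_i'-r_i$. The only substantive difference is that you explicitly flag the need to verify that the flanking factors do not collapse the rank of the central idempotent, a step the paper's proof asserts without justification when it drops those factors, so your account is, if anything, more careful on that point.
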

\begin{proof}
Using \Cref{thm:local_min_best_approx_gen}, it holds that
\begin{equation*}
\begin{split}
&\mathrm{rank} \left(\left[\left( X_{1:(i-1)}' \right)^{\mathrm{R},\top} \cdot \nabla f(X)^{<i-1,i+1 >} \cdot \left( X_{(i+2):d}''  \right)^{\mathrm{L},\top} \right]^{r_{i-1}n_i \times n_{i+1} r_{i+1}} \right) \\
&= \mathrm{rank} \left( \Big[ X_i' \cdot \dot{X}_{i+1} - B_i^\top \cdot A_i' \cdot \dot{A}_{i+1} \cdot C_{i+1} \Big]^{r_{i-1}n_i \times n_{i+1} r_{i+1}} \right) \\
&= \mathrm{rank} \left( \Big[ B_{i}^\top \cdot A_i' \cdot E_i \cdot \dot{A}_{i+1} \cdot C_{i+1} - B_i^\top \cdot A_i' \cdot \dot{A}_{i+1} \cdot C_{i+1} \Big]^{r_{i-1}n_i \times n_{i+1} r_{i+1}} \right) \\
&= \mathrm{rank} \left( \left[ B_{i}^\top \cdot A_i' \cdot \left( E_i - I_{r_i'} \right) \cdot \dot{A}_{i+1} \cdot C_{i+1} \right]^{r_{i-1}n_{i} \times n_{i+1} r_{i+1} } \right)\\
&= \mathrm{rank} \left( E_i - I_{r_i'}  \right) = r_i'-r_i.
\end{split}
\end{equation*}
Remark that the last equality holds because $E_i E_i = E_i$ and thus the space spanned by $E_i$ lies in the null space of $\left( E_i - I_{r_i'}  \right)$, which reduces the full rank $r_i'$ to $r_i'-r_i$.
\end{proof}

Remark that when $Y$ is the gradient $\nabla f(X)$, and the orthogonality conditions in \eqref{eq:param_tan_space_dim_d_gen} hold for $j=i+1$, then 
\begin{equation*}
\begin{split}
&\left[\left(X_{1:(i-1)}'\right)^{\mathrm{R},\top} \nabla f(X)^{<i-1,i+1>} \left(X_{(i+2):d}''\right)^{\mathrm{L},\top} \right]^{r_{i-1}n_i \times n_{i+1} r_{i} } =~ \\
&P_{X_i'{^\mathrm{R}}}^\perp \left[\left(X_{1:(i-1)}'\right)^{\mathrm{R},\top} \nabla f(X)^{<i-1,i+1>} \left(X_{(i+2):d}''\right)^{\mathrm{L},\top} \right]^{r_{i-1}n_i \times n_{i+1} r_{i} } P_{{X_{i+1}''}^{\mathrm{L},\top}}^\perp + \\
& P_{X_i'{^\mathrm{R}}} \left[\left(X_{1:(i-1)}'\right)^{\mathrm{R},\top} \nabla f(X)^{<i-1,i+1>} \left(X_{(i+2):d}''\right)^{\mathrm{L},\top} \right]^{r_{i-1}n_i \times n_{i+1} r_{i} } +  \\
& P_{X_i'{^\mathrm{R}}}^\perp \left[\left(X_{1:(i-1)}'\right)^{\mathrm{R},\top} \nabla f(X)^{<i-1,i+1>} \left(X_{(i+2):d}''\right)^{\mathrm{L},\top} \right]^{r_{i-1}n_i \times n_{i+1} r_{i} } P_{{X_{i+1}''}^{\mathrm{L},\top}} \\ &= P_i \left(X,\nabla f(X) \right)  + {X_i'}^\mathrm{R} W_{i+1}^\mathrm{L} + W_i^\mathrm{R} {X_{i+1}''}^\mathrm{L}.
\end{split}
\end{equation*}
Thus, when the Riemannian gradient is zero, it holds that
\begin{equation*}
\begin{split}
&\left[\left(X_{1:(i-1)}'\right)^{\mathrm{R},\top} \nabla f(X)^{<i-1,i+1>} \left(X_{(i+2):d}''\right)^{\mathrm{L},\top} \right]^{r_{i-1}n_i \times n_{i+1} r_{i} } = P_i \left(X,\nabla f(X) \right).
\end{split}
\end{equation*} 
Consequently, if the gradient is not exactly zero, it might be a good idea in practice to use $P_i(X,\nabla f(X))$ to estimate the rank $A$ instead of the expression proposed in \Cref{prop:rank_est_dim_d}.

To extend the result of \Cref{prop:rank_est_dim_d} to the LRTCP, we use the following definition of \emph{estimated rank} of a matrix inspired by \cite{gao2022riemannian}.

\begin{definition}[Estimated rank]
Given $B \in \mathbb{R}^{m\times n}$ and $ s< \mathrm{rank} \left( B \right)$, the estimated rank is defined as
\begin{equation} \label{eq:num_rank_increase}
\begin{split}
&\tilde{r}_s \left( B \right) := \begin{cases} 
0 & \text{if } B = 0 \\
\argmax_{\substack{j\le s}} \frac{\sigma_j\left( B\right) - \sigma_{j+1}\left( B \right)}{\sigma_j \left( B\right)}& \text{otherwise}
\end{cases},
\end{split}
\end{equation}
where $\sigma_{j}\left( B \right)$, $j= 1, \dots, \rank(B)$, denote the singular values of $B$ in decreasing order, i.e., $\sigma_i(B) \geq \sigma_j(B)$ for $i \leq j$. 
\end{definition}

The upper bound $s$ in the definition above prevents the estimated rank from being too high and should be chosen by the user. 

To obtain a stationary point on the manifold we optimize:
\begin{equation} \label{eq:fixed_rank_LRTCP}
\min_{X \in \mathbb{R}_{ (r_1,\dots, r_{d-1})}^{n_1 \times \cdots \times n_d}} {\frac{1}{2} \lVert X_{\Omega} - A_{\Omega} \rVert^2},
\end{equation}
using a Riemannian conjugate gradient algorithm developed for tensor completion in the TT format \cite{Steinl_high_dim_TT_compl_2016}.

An overview of the method to estimate the rank of a tensor $A$ based on the sparse tensor $A_\Omega $, is given in \Cref{alg:rank_est_dim_d}. The next subsection shows some numerical experiments using this algorithm.

\begin{algorithm}[h]
\setstretch{1.3} 
\caption{Rank estimation method for the LRTCP \eqref{eq:min_completion_TT}.} \label{alg:rank_est_dim_d}
\begin{algorithmic}[1]
\Require $\Omega \subseteq \lbrace 1, \dots, n_1 \rbrace \times \cdots \times \lbrace 1, \dots, n_d \rbrace,A_\Omega, r_i \in \mathbb{N}_0~ \forall i\in \lbrace 1,\dots,d-1 \rbrace$
\State 
$X^* \gets \argmin_{X \in \mathbb{R}_{ (r_1,\dots, r_{d-1})}^{n_1 \times \cdots \times n_d}} {\frac{1}{2} \lVert X_{\Omega} - A_{\Omega} \rVert^2}$;
\For{$i=1,\dots,d-1$}
\State $k_i \gets r_i + \tilde{r}_s \big( P_i \left( X^*, \nabla f_{\Omega}(X^*) \right) \big)$ \eqref{eq:Pi},\eqref{eq:num_rank_increase};
\EndFor
\Ensure $(k_1,\dots,k_{d-1})$
\end{algorithmic}
\end{algorithm}

\subsubsection{Experiments} \label{sec:exp_rank_est_gen}

In this section, we show the results of some numerical experiments to illustrate the use of \Cref{alg:rank_est_dim_d}. 
For these experiments, $A \in \mathbb{R}_{(r_1', \dots, r_{d-1}')}^{n_1 \times \cdots  \times n_d}$ is generated as follows:
\begin{equation} \label{eq:A_randn_d}
\begin{split}
A = ~&\texttt{randn} \left( n_1 \times r_1' \right) \cdot \texttt{randn} \left( r_1' \times n_2 \times r_2' \right) \cdots \texttt{randn} \left( r_{d-2}' \times n_{d-1} \times r_{d-1}' \right) \cdot \texttt{randn} \left( r_{d-1}' \times n_{d}\right).
\end{split}
\end{equation}
It can be shown that $A$, generated in this way, has approximately standard deviation $\sqrt{r_1' \cdots r_{d-1}'}$. To obtain $A_\Omega$, we randomly sample $A$ using \texttt{randperm} in {\Matlab}. We solve \eqref{eq:fixed_rank_LRTCP} using a Riemannian CG algorithm \cite{Steinl_high_dim_TT_compl_2016,manopt}. The starting point $X_0$ was generated randomly in $\mathbb{R}_{(r_1, \dots ,r_{d-1})}^{n_1 \times \cdots  \times n_d}$ in the same way as $A$ in \eqref{eq:A_randn_d}.

The results of the first experiment are shown in \Cref{fig:rank_est_d4_ni15_rAi3_rxi1_87it_gradR10-11_0_3prodn}. 
For this experiment, we choose $d:=4$, $n_i:=15$, for $i=1,\dots, d$, $r_i':=3$ and $r_i:=1$, for $i=1,\dots, d-1$, and $\rho_{\Omega} = 0.3$. The value of the Riemannian gradient at $X^*$ that is obtained for this experiment is approximately $10^{-11}$ after 87 iterations. The leftmost column of subfigures shows the first seven singular values of the matrices $P_i \left(X^*,\nabla f_\Omega(X^*)\right)$, for $i=1,\dots,d-1$. The second column shows the relative gap between these singular values, which is used the compute the estimated rank. As can be seen, the combination of the estimated ranks of these matrices indeed equals the difference between the TT-rank of $A$ and $X^*$. On the other hand, in the third and fourth column the singular values and relative gap of the unfoldings of $A_{\Omega}$ are respectively shown. As can be seen, based on the estimated rank of these unfoldings, the rank of $A$ can not be estimated. Furthermore, computing the singular values of the unfoldings of $A_{\Omega}$ is computationally more expensive ($\mathcal{O}\left(\max_i(n_i)^d \right)$ instead of $\mathcal{O}\left(\max_i(n_i)^3 \right)$). As a reference, in the rightmost column the singular values of the unfoldings of $A$ are shown to illustrate the true TT-rank of $A$. 

\begin{figure}[H]
\centering
\includegraphics[width= 0.85\linewidth]{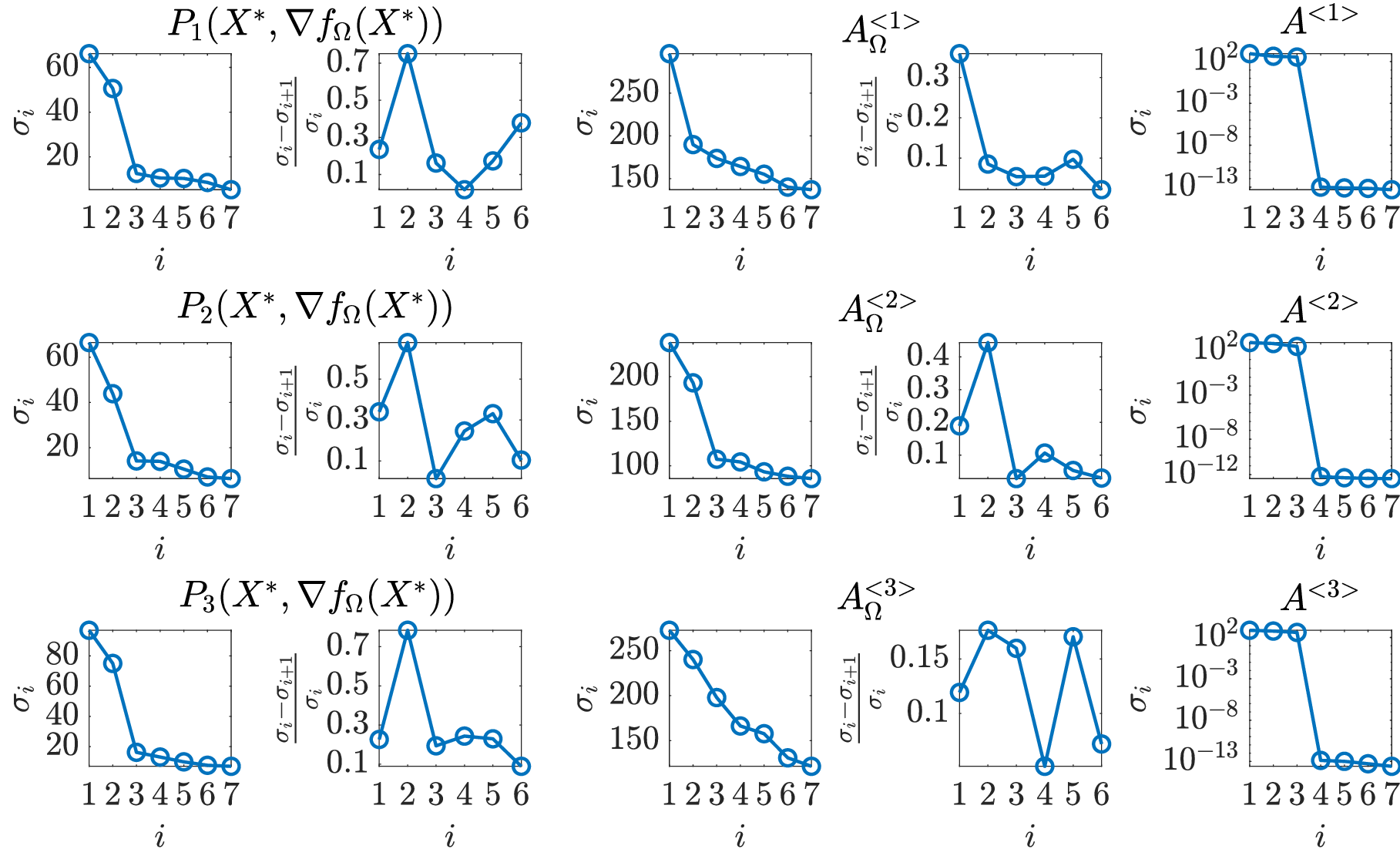}
\caption{The first 7 singular values of  
the matrices in \eqref{eq:Pi}, the relative gap between these singular values, the singular values of the unfoldings of $A_\Omega$ and $A$, where $X^*$ is obtained after 87 iterations of the CG algorithm, and with $d:=4$, $n_i:=15$, for $i=1,\dots,d$, $r'_i:=3$ and $r_i=1$, for $i=1,\dots,d-1$, and $\rho_{\Omega}:= 0.3$. The norm of the Riemannian gradient that is obtained at $X^*$ is approximately $10^{-11}$. }
\label{fig:rank_est_d4_ni15_rAi3_rxi1_87it_gradR10-11_0_3prodn}
\end{figure}

A second example is shown in \Cref{fig:rank_est_d4_ni15_rAi3_rxi1_15it_gradRn_31_0_3prodn}, where the number of iterations is lowered to $15$. The norm of the gradient that is obtained in this case is approximately $31$. The results are very similar and the estimated ranks of the matrices $P_i \left(X^*,\nabla f_\Omega(X^*)\right)$ still form the TT-rank of $A$ minus the TT-rank of $X^*$. A low norm of the gradient and consequently a high number of iterations is in practice thus not necessary in \Cref{alg:rank_est_dim_d} to estimate the TT-rank of $A$. Remark that the singular values of the unfoldings of $A_{\Omega}$ and $A$ have not changed compared to the previous experiment. 

\begin{figure}[H]
\centering
\includegraphics[width= 0.85\linewidth]{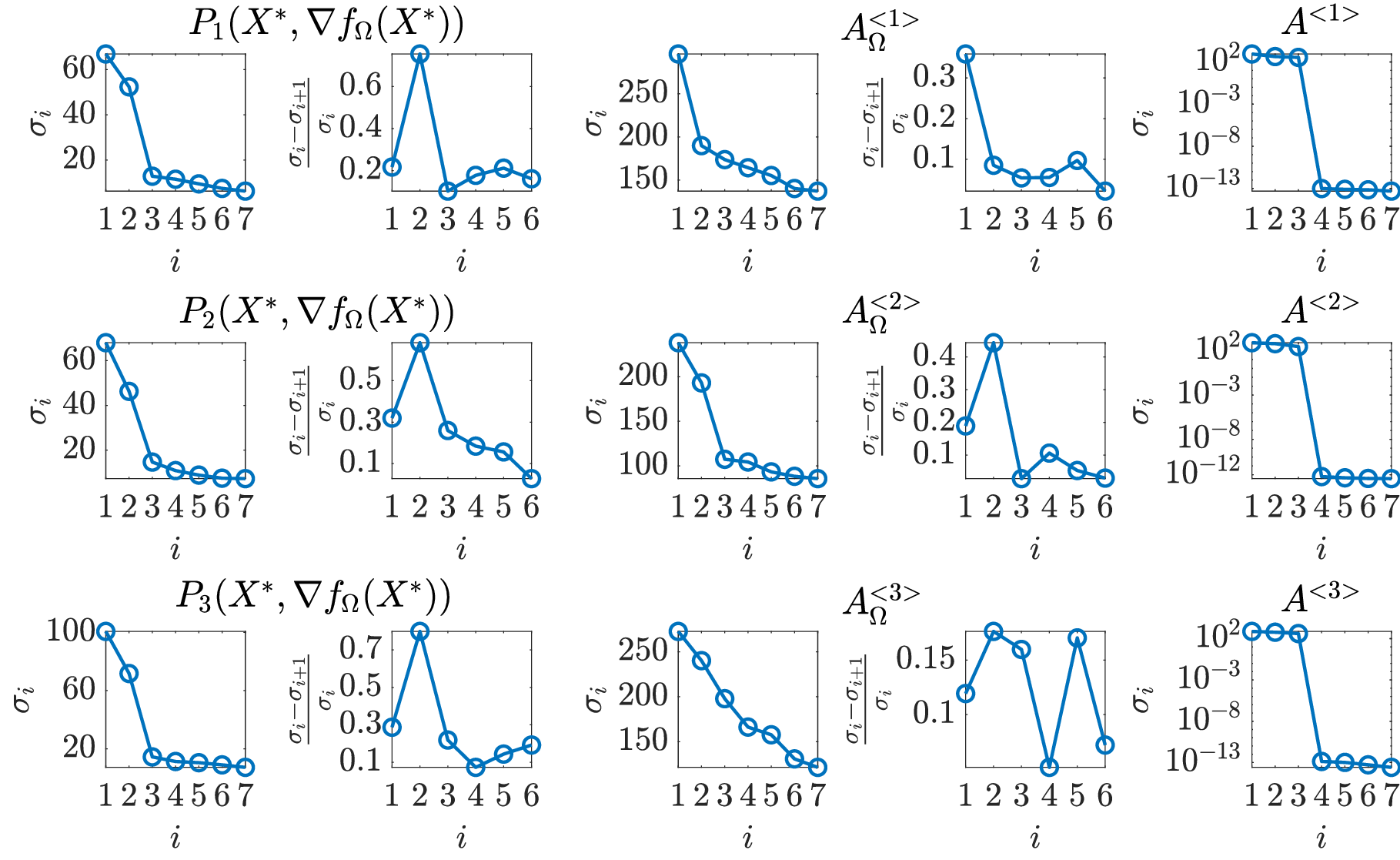}
\caption{The first 7 singular values of 
the matrices in \eqref{eq:Pi}, the relative gap between these singular values, the singular values of the unfoldings of $A_\Omega$ and $A$, where $X^*$ is obtained after 15 iterations of the CG algorithm, and with $d:=4$, $n_i:=15$, for $i=1,\dots,d$, $r'_i:=3$, $r_i:=1$, for $i=1,\dots,d-1$, and $\rho_{\Omega}:= 0.3$. The norm of the Riemannian gradient that is obtained at $X^*$ is approximately $31$. }
\label{fig:rank_est_d4_ni15_rAi3_rxi1_15it_gradRn_31_0_3prodn}
\end{figure}

In a third experiment the rank of $A$ is changed to $[2,5,3]$ and the results are shown in \Cref{fig:rank_est_d4_ni15_rAi253_rxi1_15it_gradRn13_0_3prodn}. 
The other parameters and maximal number of iterations were the same as in the previous experiment. The norm of the Riemannian gradient that is obtained is approximately 13.
Again only the matrices in \eqref{eq:Pi} can be used to determine the TT-rank of $A$. 

\begin{figure}[H]
\centering
\includegraphics[width= 0.85\linewidth]{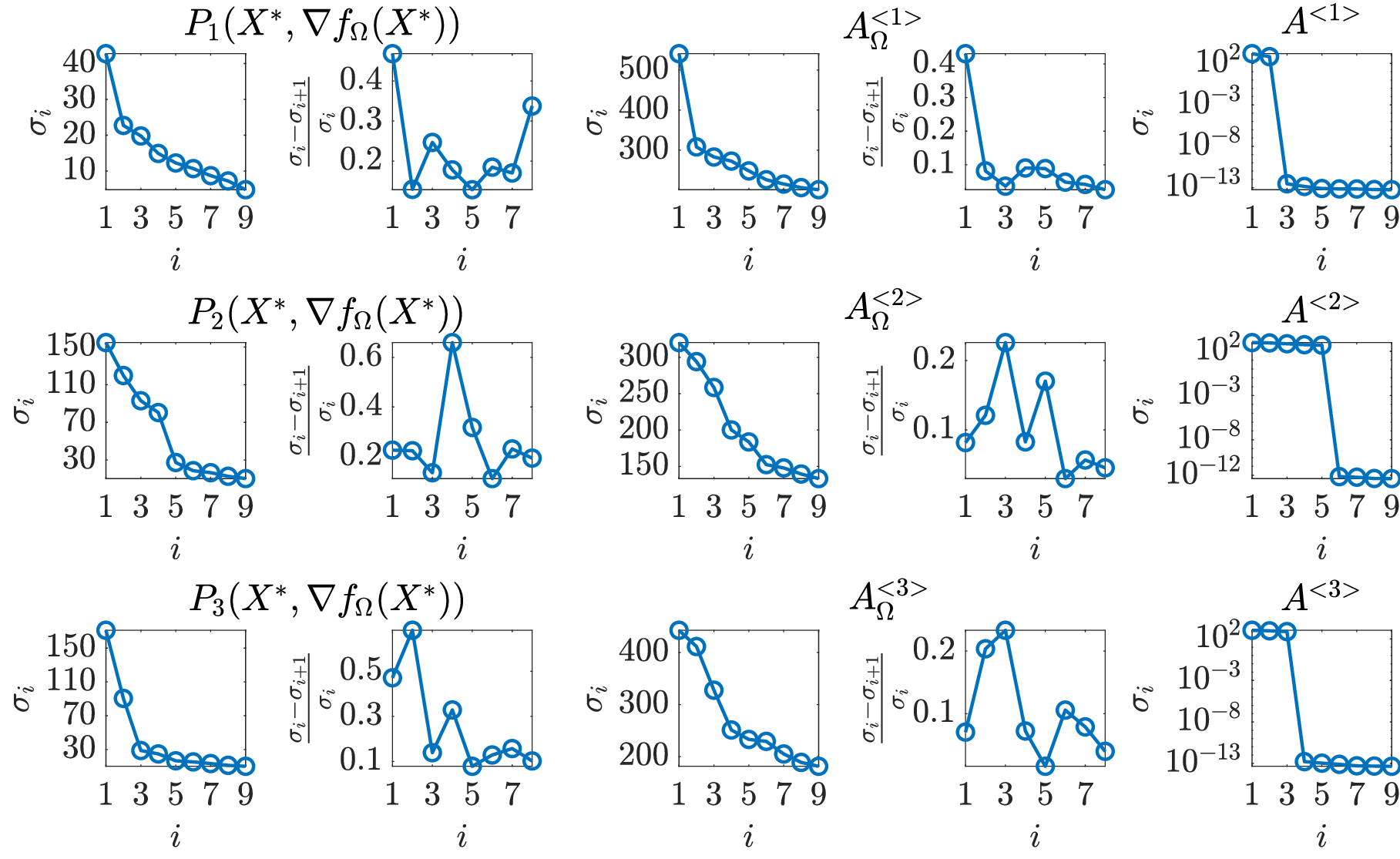}
\caption{The first 7 singular values of 
the matrices in \eqref{eq:Pi}, the relative gap between these singular values, the singular values of the unfoldings of $A_\Omega$, and $A$, where $X^*$ is obtained after 15 iterations of the CG algorithm, and with $d:=4$, $n_i:=15$, for $i=1,\dots,d$, $r':=[2,5,3]$, $r_i:=1$, for $i=1,\dots,d-1$, and $\rho_{\Omega}:= 0.3$. The norm of the Riemannian gradient that is obtained at $X^*$ is approximately $13$. }
\label{fig:rank_est_d4_ni15_rAi253_rxi1_15it_gradRn13_0_3prodn}
\end{figure}

In a fourth experiment, the dimension is increased to $d:=5$ and the value of $n_i$ is lowered to 10 for all $i=1,\dots,d$. The other parameters were the same as in the second experiment. The CG algorithm is run for 15 iterations and the norm of the Riemannian gradient is approximately $12$. The results are shown in \Cref{fig:rank_est_d5_ni10_rAi3_rxi1_15it_gradRn_12_0_3prodn}. Again, only \Cref{alg:rank_est_dim_d} is able to determine the exact TT-rank of $A$ based on the sampled elements.

\begin{figure}[H]
\centering
\includegraphics[width= 0.85\linewidth]{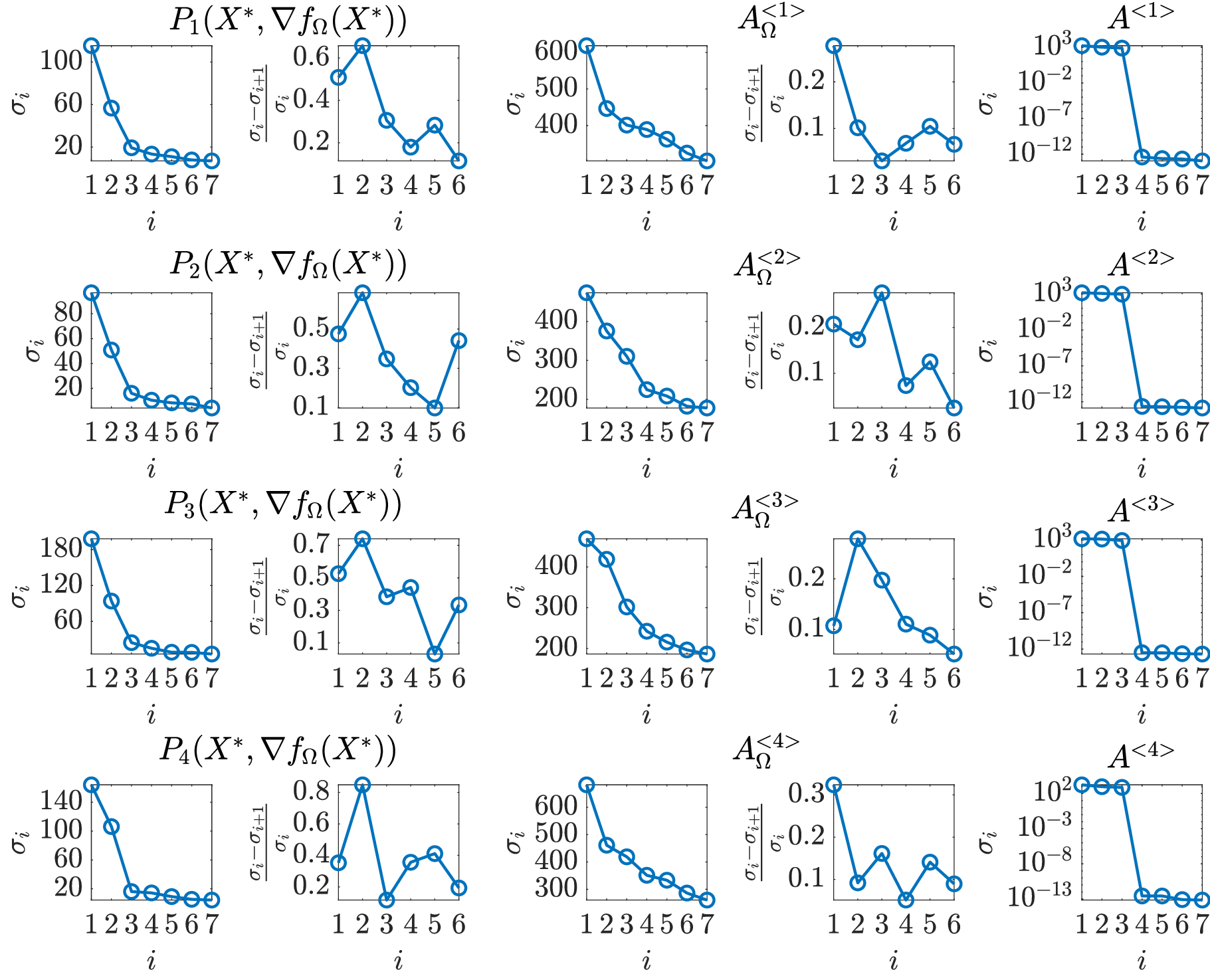}
\caption{The first 7 singular values of 
the matrices in \eqref{eq:Pi}, the relative gap between these singular values, the singular values of the unfoldings of $A_\Omega$, and $A$, where $X$ is obtained after 15 iterations of the CG algorithm, and with $d:=5$, $n_i:=10$, for $i=1,\dots,d$, $r'_i:=3$ and $r_i:=1$, for $i=1,\dots,d-1$, and $\rho_{\Omega}:= 0.3$. The norm of the Riemannian gradient that is obtained at $X^*$ is approximately $12$.}
\label{fig:rank_est_d5_ni10_rAi3_rxi1_15it_gradRn_12_0_3prodn}
\end{figure}

In a last experiment, noise is added to the data as follows:
\begin{equation} \label{eq:A_noise}
A_{\eta} := A + \eta~ \mlin{randn} \left(n_1 \times \cdots \times n_d \right),
\end{equation}
for $\eta := 10^{-1}$. The results are shown in \Cref{fig:rank_est_d5_ni10_rAi3_rxi1_15it_gradRn_77_0_3prodn_0_1noise}. As can be seen based on the rightmost column, $A$ is now indeed only approximately of low TT-rank but this does not affect the estimated TT-rank from \Cref{alg:rank_est_dim_d}.

\begin{figure}[H]
\centering
\includegraphics[width= 0.85\linewidth]{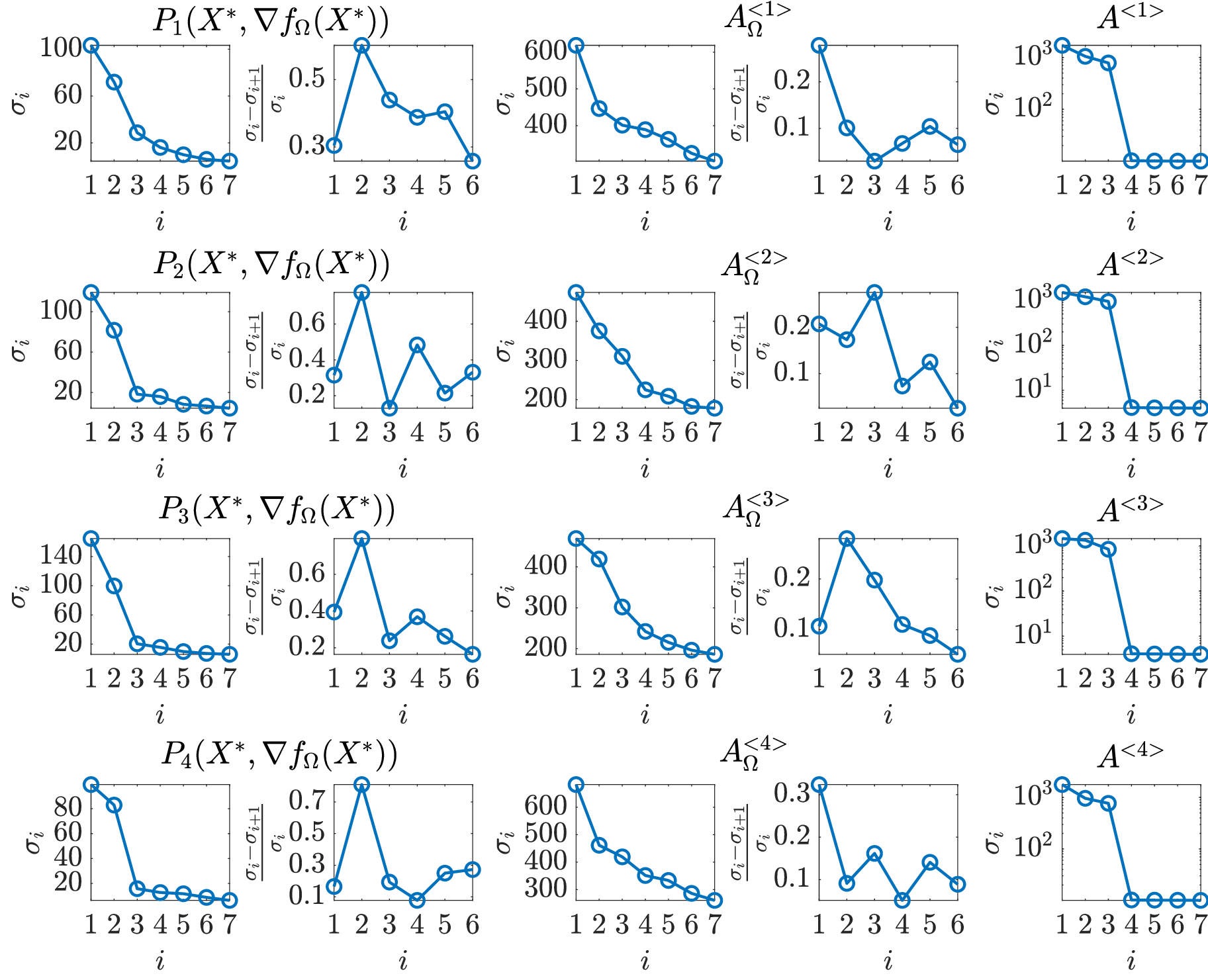}
\caption{The first 7 singular values of 
the matrices in \eqref{eq:Pi}, the relative gap between these singular values, the singular values of the unfoldings of $A_\Omega$, and $A$, where noise is added to the data as in \eqref{eq:A_noise} of size $10^{-1}$, and $X$ is obtained after 15 iterations of the CG algorithm, and with $d:=5$, $n_i:=10$, for $i=1,\dots,d$, $r'_i:=3$ and $r_i:=1$, for $i=1,\dots,d-1$, and $\rho_{\Omega}:= 0.3$. The norm of the Riemannian gradient that is obtained at $X^*$ is approximately $77$.}
\label{fig:rank_est_d5_ni10_rAi3_rxi1_15it_gradRn_77_0_3prodn_0_1noise}
\end{figure}

\subsubsection{Algorithm for rank increase} \label{sec:rank_incr}
When the search direction $\hat{Y}$ is determined, a line search along this direction is performed. For the LRTCP \eqref{eq:min_completion_TT}, we can perform an exact line search, i.e., we can compute the step size $t_k$ at iteration $k$ as $t_k = \argmin_{t>0} f_{\Omega} \big(X^{(k)} + t  \hat{Y} \big)$, which can be obtained as
\begin{equation} \label{eq:opt_step_size}
\begin{split}
\frac{\partial f_{\Omega} \big( X^{(k)} + t  \hat{Y} \big)}{\partial t} = \frac{\partial}{\partial t} \left( \frac{1}{2} \Big\lVert X^{(k)}_{\Omega} + t_k \hat{Y}_{\Omega} - A_{\Omega} \Big\rVert^2 \right) = 0 & \Leftrightarrow \Big\langle X^{(k)}_{\Omega} + t_k \hat{Y}_{\Omega} - A_{\Omega}, \hat{Y}_{\Omega} \Big\rangle = 0, \\
&\Leftrightarrow t_k = -\frac{\big\langle \nabla f_\Omega \big(X^{(k)} \big), \hat{Y}_{\Omega} \big\rangle}{\big\lVert \hat{Y}_{\Omega} \big\rVert^2},
\end{split}
\end{equation}
where $\hat{Y}_{\Omega}$ is defined by \eqref{eq:Z_Omega} and $X^{(k)}$ is the current best approximation at iteration $k$.

To increase the rank $r_i$, using the parameters $U_i$ and $V_{i+1}$ that we obtain from the projection in \Cref{prop:proj_normal_part}, we need the following retraction operator \cite{kutschan2018} which projects $X + t \hat{Y}$ to the manifold $ \mathbb{R}_{(r_1, \dots, r_{i-1}, k_i, r_{i+1}, \dots, r_{d-1})}^{n_1 \times \cdots \times n_d}$:
\begin{equation} \label{eq:retraction_high_dim}
\begin{split}
\mathcal{R}\left( X, t, \hat{Y} \right) =  X_{1:(i-1)}' \cdot \begin{bmatrix} X_i' & U_i \end{bmatrix} \cdot \begin{bmatrix} X_{i+1} \\ t V_{i+1} \end{bmatrix} \cdot X_{(i+2):d}'' ,
\end{split}
\end{equation}
where $\hat{Y} = X_{1:(i-1)}' \cdot U_i \cdot V_{i+1} \cdot X_{(i+2):d}'' \in {T_X^\perp \mathbb{R}_{\le (r_{1:(i-1)}, k_i, r_{(i+1):d})}^{n_1 \times \cdots \times n_d}}$.

We do this for $i=1,\dots, d-1$. For each $i$, the rank increase is determined by the estimated rank from \Cref{prop:rank_est_dim_d}. An overview of the algorithm is given in \Cref{alg:rank_incr_dim_d}. Remark that the rank is only increased if the direction produces a sufficient decrease in the cost and test function. This is controlled by a small parameter $\varepsilon$.

\begin{algorithm}[H]
\setstretch{1.3} 
\caption{Method to increase the rank of a TTD in a RRAM based on projections of the negative gradient onto subcones of the tangent cone.}
\label{alg:rank_incr_dim_d}
\begin{algorithmic}[1]
\Require
$\Omega, \Gamma, X \in \mathbb{R}_{\le (r_1,\dots, r_{d-1})}^{n_1 \times \cdots \times n_d}, s_{\max},r_{\max} \in \mathbb{N}_0^{d-1}, 0 \leq \varepsilon < 1$
\For{$i=1,\dots,d-1$}
\State $P \gets P_i\left(X,\nabla f_\Omega(X) \right)$ \eqref{eq:Pi};
\State $s \gets \min \left(r_{\max}(i)-r_i,s_{\max}(i) \right)$;
\State
$s_i \gets \tilde{r}_{s} \left( P \right)$  \eqref{eq:num_rank_increase};
\State
$[U_i^\mathrm{R},S,V] \gets \mathrm{SVD}_{s}\left( P \right)$; 
\State $V_{i+1}^\mathrm{L} \gets S V^\top$;
\State $\hat{Y} \gets X = X'_{1:(i-1)} \cdot X_i' \cdot \dot{X}_{i+1} \cdot X''_{(i+2):d}$; 
\State $\hat{Y}_i \gets U_i$; 
\State $\hat{Y}_{i+1} \gets V_{i+1}$;
\State $t \gets - \frac{\langle \hat{Y}_\Omega, \nabla f_\Omega \left( X\right) \rangle}{\lVert \hat{Y}_\Omega \rVert^2} $; 
\State $X_{\mathrm{new}} \gets \mathcal{R} ( X, t,\hat{Y} )$ \eqref{eq:retraction_high_dim};
\If{$\left( f_\Omega \left(X\right) -f_\Omega \left(X_{\mathrm{new}}\right)\right) > \varepsilon \And \left( f_\Gamma \left(X\right) -f_\Gamma \left(X_{\mathrm{new}}\right)\right) > \varepsilon$}
\State $X \gets X_{\mathrm{new}}$;
\EndIf
\EndFor
\Ensure
$X$
\end{algorithmic}
\end{algorithm}

\subsection{Rank reduction} \label{sec:rank_red_gen}

To reduce the rank, we propose to use the TT-rounding algorithm \cite[Algorithm 2]{Oseledets2011}, shown in \Cref{alg:TT_rounding}. The rank reduction is performed from left to right. Remark that we could also perform the reduction from right to left and compare the result but this would require double the computational effort and is thus not implemented. In the next theorem, we show that \Cref{alg:TT_rounding} can be considered as an approximate projection and satisfies a certain angle condition.

\begin{algorithm}[h]
\setstretch{1.3}
\caption{Rank reduction \cite[Algorithm 2]{Oseledets2011}}
\label{alg:TT_rounding}
\begin{algorithmic}[1] 
\Require
$A = \dot{A}_1 \cdot A_2'' \cdots A_d'' \in \mathbb{R}_{\le (r_1',\dots, r_{d-1}')}^{n_1 \times \cdots \times n_d},r_i \in \mathbb{N}_0~ \forall i \in \lbrace 1,\dots,d-1\rbrace$
\For{$i=1,\dots,d-1$}
\State $[{X_i'}^\mathrm{R},S_i,V_i] \gets \mathrm{SVD}_{r_i} \left(\dot{A}_i^\mathrm{R} \right)$;
\State $\dot{A}_{i+1} \gets S_i V_i^\top \cdot A_{i+1}''$;
\EndFor
\State $\dot{X}_d \gets \dot{A}_{d}$;
\Ensure $X = X_1' \cdots X_{d-1}' \cdot \dot{X}_d \in \mathbb{R}_{\le (r_1,\dots, r_{d-1})}^{n_1 \times \cdots \times n_d}$
\end{algorithmic}
\end{algorithm}

\begin{theorem}[Angle condition \Cref{alg:TT_rounding}] \label{thm:angle_cond_rank_red_gen}
Let $A \in \mathbb{R}_{\le (r_1',\dots, r_{d-1}')}^{n_1 \times \cdots  \times n_d}$. \Cref{alg:TT_rounding} can be considered as an approximate projection:
\begin{align*}
\tilde{\mathcal{P}}_{\mathbb{R}_{\le (r_1, \dots, r_{d-1})}^{n_1 \times \cdots \times n_d}}&: \mathbb{R}_{\le (r_1',\dots, r_{d-1}')}^{n_1 \times \cdots \times n_d} \multimap \mathbb{R}_{\le (r_1, \dots, r_{d-1})}^{n_1 \times \cdots \times n_d},
\end{align*}
that satisfies the necessary condition \eqref{eq:nec_cond_approx_proj} and thus satisfies the angle condition \eqref{eq:ApproximateProjectionAngleCondition2} with
\begin{equation} \label{eq:omega_rank_red_gen}
\omega = \sqrt{\frac{\prod_{i=2}^{d-1} r_i}{\prod_{i=2}^{d-1} r_i'}}.
\end{equation}
\end{theorem}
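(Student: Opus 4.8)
The plan is to verify the two requirements separately. First I would show that the output $X$ of \Cref{alg:TT_rounding} satisfies the necessary condition \eqref{eq:nec_cond_approx_proj}, i.e.\ $\langle A-X,X\rangle=0$; this is exactly the hypothesis under which the general angle condition collapses to the simplified form \eqref{eq:ApproximateProjectionAngleCondition2}. Second, I would establish \eqref{eq:ApproximateProjectionAngleCondition2} by bounding $\|X\|$ from below by $\omega\,\|\hat{A}\|$, where I write $\hat{A}$ for an element of the exact projection $\mathcal{P}_{\mathbb{R}^{n_1\times\cdots\times n_d}_{\le(r_1,\dots,r_{d-1})}}A$; recall from \eqref{eq:proj_max_reform_gen2} that all such $\hat{A}$ share the common norm that appears on the right-hand side.

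For the necessary condition I would introduce the intermediate iterates $X^{(0)}:=A,\ldots,X^{(d-1)}:=X$, where $X^{(i)}$ is the tensor obtained after the first $i$ truncations, and telescope $A-X=\sum_{i=1}^{d-1}\bigl(X^{(i-1)}-X^{(i)}\bigr)$. The point is that $X^{(i-1)}$ and $X^{(i)}$ share the already-orthogonalized left factors $X_1',\dots,X_{i-1}'$, while the $i$-th factor of their difference is precisely the discarded tail $E_i$ of the truncated SVD performed at step $i$, whose right unfolding satisfies $({X_i'}^{\mathrm{R}})^\top E_i^{\mathrm{R}}=0$. Writing both $X^{(i-1)}-X^{(i)}$ and $X$ via \eqref{eq:LR_TTD} and using that $\bigl(X_{1:(i-1)}'\bigr)^{\mathrm{R}}$ has orthonormal columns, the first orthogonality test in \eqref{eq:inner_prod_zero} evaluated at index $i$ reduces to $E_i^{\mathrm{R},\top}{X_i'}^{\mathrm{R}}=0$. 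Hence each telescoping term is orthogonal to $X$, and summing yields $\langle A-X,X\rangle=0$, which is \eqref{eq:nec_cond_approx_proj}.

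For the angle condition I would track Frobenius norms through \Cref{alg:TT_rounding}. Let $\sigma_j^{(i)}$ denote the singular values of the core's right unfolding $\dot A_i^{\mathrm{R}}$ at the moment it is truncated (there are at most $r_i'$ of them, since the left truncations cannot increase the right ranks). Because the trailing blocks $A_j''$ are left-orthogonal, the norm of the core produced at step $i$ equals the retained energy $E_i:=\sum_{j=1}^{r_i}(\sigma_j^{(i)})^2$, so that $\|X\|^2=\|\dot X_d\|^2=E_{d-1}$. The core entering step $i{+}1$ is exactly the one retained at step $i$, so its total energy $F_{i+1}$ equals $E_i$, and an averaging argument—the mean of the largest $r_i$ among at most $r_i'$ nonnegative values is at least their overall mean—gives $E_i\ge (r_i/r_i')F_i$. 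Chaining these bounds from $i=d-1$ down to $i=2$ yields $\|X\|^2\ge\bigl(\prod_{i=2}^{d-1}r_i/r_i'\bigr)E_1$ with $E_1=\sum_{j=1}^{r_1}\sigma_j(A^{<1>})^2$. Finally, since $\hat A$ has first-unfolding rank at most $r_1$ and satisfies $\langle A,\hat A\rangle=\|\hat A\|^2$, von Neumann's trace inequality combined with Cauchy--Schwarz gives $\|\hat A\|^2\le\sum_{j=1}^{r_1}\sigma_j(A^{<1>})^2=E_1$, closing the chain and producing exactly the constant \eqref{eq:omega_rank_red_gen}.

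I expect the main obstacle to be pinning down the \emph{exact} constant rather than the cruder $\prod_{i=1}^{d-1}r_i/r_i'$ that a naive chaining would give. Bounding $\|\hat A\|$ by $\|A\|$ would waste the factor $r_1/r_1'$; the trick is to instead bound $\|\hat A\|$ by the top-$r_1$ singular-value energy of the first unfolding through the trace inequality, which is precisely what absorbs that factor. A secondary, more technical care point is the bookkeeping in the orthogonality step: one must check that contracting the error factor $E_i$ with the left-orthogonal tail $A_{i+1}''\cdots A_d''$, and with the orthonormal head $X_{1:(i-1)}'$, leaves the relation $({X_i'}^{\mathrm{R}})^\top E_i^{\mathrm{R}}=0$ intact so that \eqref{eq:inner_prod_zero} genuinely applies to every telescoping term.
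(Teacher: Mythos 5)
Your proof is correct, and while the overall architecture matches the paper's (orthogonality of the discarded SVD tails to establish \eqref{eq:nec_cond_approx_proj}, then a step-by-step energy-retention bound $E_i \ge (r_i/r_i')F_i$ chained over $i=2,\dots,d-1$), you anchor the chain in a genuinely different way. The paper bounds $\big\lVert \hat{X} \big\rVert$ by $\big\lVert \dot{A}_1 P_{V_1} \big\rVert$ by invoking the stationary-point structure of the exact projection from \Cref{thm:local_min_best_approx_gen} — writing $\hat{X} = \dot{A}_1 \cdot D_1 \cdot A_2'' \cdots D_{d-1}\cdot A_d''$, establishing the monotone chain \eqref{eq:ineq_hatX}, and then applying the Eckart--Young optimality \eqref{eq:ineqs_svd_trunc1} of the first truncation against $P_{\hat{X}_1'}$. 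You instead bound $\big\lVert \hat{A} \big\rVert^2 \le \sum_{j=1}^{r_1}\sigma_j(A^{<1>})^2 = E_1$ directly via von Neumann's trace inequality and Cauchy--Schwarz, using only that $\hat{A}^{<1>}$ has rank at most $r_1$ and that $\langle A, \hat{A}\rangle = \big\lVert \hat{A}\big\rVert^2$. Both routes absorb the would-be factor $r_1/r_1'$ and yield the same constant \eqref{eq:omega_rank_red_gen}. Your version is more elementary and self-contained: it does not require the exact projection to admit the smooth-stratum stationarity characterization of \Cref{thm:local_min_best_approx_gen} (which implicitly assumes $\hat{X} \in \mathbb{R}^{n_1\times\cdots\times n_d}_{(r_1,\dots,r_{d-1})}$), whereas the paper's version yields along the way the structural identity $\hat{X} = \dot{A}_1\cdot D_1\cdot A_2''\cdots D_{d-1}\cdot A_d''$, which is reused elsewhere. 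Your telescoping decomposition $A - X = \sum_i\bigl(X^{(i-1)}-X^{(i)}\bigr)$ for the first part is just a regrouping of the paper's $2^{d-1}$-term expansion of $\dot{A}_1(P_{V_1}+P_{V_1}^\perp)\cdots$, so no substantive difference there; only mind the notational clash between your error core $E_i$ and your retained energy $E_i$.
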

\begin{proof}
We first prove that every output $X$ of \Cref{alg:TT_rounding} satisfies the necessary condition \eqref{eq:nec_cond_approx_proj}. In the first iteration, it holds that $X_1' = \dot{A}_1 V_1 S_1^{-1}$ and in every subsequent iteration $i=2, \dots, d-1$ it holds that ${X_i'} = S_{i-1} V_{i-1}^\top \cdot {A''_{i}} \cdot V_i S_i^{-1}$,
and in the last iteration $\dot{X}_d = S_{d-1} V_{d-1} A_d''$. Thus, the $X$ that is given to the output can be written as
\begin{align*}
X = X_1' \cdots X_{d-1}' \cdot \dot{X}_d &= \dot{A}_1 V_1 S_1 S_{1}^{-1} V_1^\top \cdot A_2'' \cdot V_2 S_2 S_{2}^{-1} V_2^\top \cdot A_3'' \cdots V_{d-1} S_{d-1} S_{d-1}^{-1} V_{d-1}^\top A_d'' \\
&= \dot{A}_1 P_{V_1} \cdot A''_2 \cdot P_{V_2} \cdot A''_3 \cdots P_{V_{d-1}} A_d''.
\end{align*}
Furthermore, every tensor $A$ can be written as
\begin{align*}
A &= \dot{A}_1 \left( P_{V_1} + P_{V_1}^\perp\right) \cdot A''_2 \cdot \left( P_{V_2} + P_{V_2}^\perp\right) \cdot A''_3 \cdots \left( P_{V_{d-1}} + P_{V_{d-1}}^\perp \right) \cdot A_d'',
\end{align*}
which can be expanded into a sum of $2^{d-1}$ terms. For every term, except for the one that equals $X$, it holds that there exists an index $i \in \lbrace 1,\dots, d-1 \rbrace$ such that the first $i$ factors are
\begin{align*}
&\dot{A}_1 P_{V_1} \cdot A''_2 \cdots P_{V_{i-1}} \cdot A_i'' \cdot P_{V_i}^\perp = X_1' \cdots X_{i-1}' \cdot S_{i-1} V_{i-1}^\top \cdot A_i'' \cdot P_{V_i}^\perp = X_1' \cdots X_{i-1}' \cdot \dot{A}_i \cdot P_{V_i}^\perp.
\end{align*}
Thus, it holds that
\begin{align*}
&\left(X_1' \cdots X_i' \right)^{\mathrm{R},\top} \left( X_1' \cdots X_{i-1}' \cdot \dot{A}_i \cdot P_{V_i}^\perp \right)^\mathrm{R} \\
 & = {X_i'}^{\mathrm{R},\top} \left(I_{n_i} \otimes \left( X_1' \cdots X_{i-1}' \right)^{\mathrm{R},\top} \right) \left( I_{n_i} \otimes \left( X_1' \cdots X_{i-1}' \right)^\mathrm{R} \right) \left( \dot{A}_i \cdot P_{V_i}^\perp \right)^\mathrm{R}\\
& = {X_i'}^{\mathrm{R},\top} {\dot{A}_i}^\mathrm{R} P_{V_i}^\perp  = S_i V_i^\top P_{V_i}^\perp=0,
\end{align*}
and because of \eqref{eq:inner_prod_zero}, the inner product of $A$ with $X$ is $\langle X,A \rangle = \langle X,X \rangle$, and the angle condition can be written in the form \eqref{eq:ApproximateProjectionAngleCondition2}. 

Let $\hat{X} = \hat{X}_1' \cdots \hat{X}_{d-1}' \cdot \hat{\dot{X}}_d \in \mathcal{P}_{\mathbb{R}_{\le (r_1, \dots, r_{d-1})}^{n_1 \times \cdots \times n_d}}A$, then
$\mathcal{P}_{T_{\hat{X}}\mathbb{R}_{(r_1, \dots, r_{d-1})}^{n_1 \times \cdots \times n_d }} \nabla f \left(\hat{X} \right)=0$ and $\hat{X}$ can be written as in \Cref{thm:local_min_best_approx_gen}:
\begin{equation*}
\hat{X} = \dot{A}_1 \cdot D_1 \cdot A_2'' \cdots D_{d-1} \cdot A_d'',
\end{equation*}
where based on \eqref{eq:Xi_local_min_dim_d} and \eqref{eq:XvsA_local_min_dim_d}: $\hat{X}_1' = \dot{A}_1 C_1 Q_1^{-1}$, $\hat{X}_i' = B_i^\top Q_{i-1}' \cdot {A}_i'' \cdot C_i Q_i^{-1}$, for $i=2,\dots,d-1$, and $\hat{\dot{X}}_d = B_d^\top Q_{d-1}' {A}_d''$.
Furthermore, $\hat{X}$ must by definition also satisfy the necessary condition $\left\langle \hat{X},A \right\rangle = \left\lVert \hat{X} \right\rVert^2$. Additionally, $\left\lVert \hat{X} \right\rVert \leq \lVert A\rVert$, and more specifically
\begin{align} \label{eq:ineq_hatX}
\left\lVert \hat{X} \right\rVert = \left\lVert \dot{A}_1 \cdot D_1 \cdot A_2'' \cdots D_{d-1} \cdot A_d''\right\rVert \nonumber \leq~ & \left\lVert \dot{A}_1 \cdot D_1 \cdot A_2'' \cdots D_{d-2} \cdot A_{d-1}'' \cdot A_d''\right\rVert \nonumber\\
&\vdots \nonumber \\
\leq~ & \left\lVert \dot{A}_1 \cdot D_1 \cdot A_2'' \cdots A_d''\right\rVert.
\end{align}
These inequalities hold because 
\begin{align*}
\left\lVert \dot{A}_1 D_1 \cdot A_2'' \cdots D_{i} \cdot A_{i+1}'' \cdots A_d'' \right\rVert &= \left\lVert \dot{A}_1  D_1 \cdot A_2'' \cdot D_2 \cdots A_{i}'' \cdot D_{i} \right\rVert \\
&= \left\lVert \dot{A}_1 C_1 Q_1^{-1} B_{2}^\top Q_{1}' \cdot A_2'' \cdot C_2 Q_2^{-1} B_{3}^\top Q_{2}' \cdots A_{i}'' \cdot C_i Q_i^{-1} B_{i+1}^\top Q_{i}' \right\rVert \\
&= \left\lVert \hat{X}_1' \cdots \hat{X}_{i}' \cdot B_{i+1}^\top Q_i' \right\rVert \\
&= \left\lVert B_{i+1}^\top Q_i' \right\rVert,
\end{align*}
and from \eqref{eq:Bi_ifv_Bi-1} we know that $B_{i+1}^\top = {X_{i}'}^{\mathrm{R},\top} \left( B_{i}^\top \cdot {A_{i}'} \right)^{\mathrm{R}}$. Thus,
\begin{align*}
\left\lVert B_{i+1}^\top Q_i' \right\rVert = \left\lVert {X_{i}'}^{\mathrm{R},\top} \left( B_{i}^\top \cdot {A_{i}'} \right)^{\mathrm{R}} Q_i' \right\rVert = \left\lVert {X_{i}'}^{\mathrm{R},\top} \left( B_{i}^\top Q_{i-1}' \cdot {A_{i}''} \right)^{\mathrm{R}} \right\rVert &= \left\lVert P_{{X_{i}'}^{\mathrm{R}}} \left( B_{i}^\top Q_{i-1}' \cdot {A_{i}''} \right)^{\mathrm{R}} \right\rVert \\
&\leq \left\lVert B_{i}^\top Q_{i-1}' \cdot {A_{i}''}^{\mathrm{L}}  \right\rVert = \left\lVert B_{i}^\top Q_{i-1}' \right\rVert,
\end{align*}
and consequently \eqref{eq:ineq_hatX} holds. 

We can now prove the angle condition \eqref{eq:ApproximateProjectionAngleCondition2} with $\omega$ as in \eqref{eq:omega_rank_red_gen}. In the first iteration, ${X_1'}^\mathrm{R}$ is obtained from the truncated SVD of $\dot{A}_1$ and thus it holds that 
\begin{align} \label{eq:ineq_PV1_hatX}
\left\lVert \dot{A}_1 P_{V_1} \right\rVert = \left\lVert \dot{A}_1 P_{V_1} \cdot A_2'' \cdots A_d'' \right\rVert = \left\lVert P_{X_1'} A^\mathrm{L} \right\rVert \overset{\eqref{eq:ineqs_svd_trunc1}}{\geq} \left\lVert P_{\hat{X}_1'} A^\mathrm{L} \right\rVert &= \left\lVert \dot{A}_1 C_1 Q_1^{-1} \left(C_1 Q_1^{-1}\right)^\top Q_1'^\top A_1'^\top A_1' \cdot \dot{A}_2 \right\rVert \nonumber\\
& = \left\lVert \dot{A}_1 C_1 Q_1^{-1} B_2^\top Q_1' \cdot {A}_2'' \right\rVert \nonumber\\
&= \left\lVert \dot{A}_1 D_1 \right\rVert \overset{\eqref{eq:ineq_hatX}}{\geq} \left\lVert \hat{X} \right\rVert,
\end{align}
where we used the fact that $B_2 = Q_1'C_1 Q_1^{-1}$ as in \Cref{thm:local_min_best_approx_gen}.
In the next iteration, $V_2$ is obtained from the truncated SVD of $\left( S_1 V_1^\top  \cdot A_2'' \right)^\mathrm{R}$ and thus
\begin{align*}
\left\lVert \left( S_1 V_1^\top  \cdot A_2'' \right)^\mathrm{R} P_{V_2} \right\rVert^2 = \left\lVert X_1' S_1 V_1^\top  \cdot A_2'' \cdot P_{V_2} \right\rVert^2 &= \left\lVert \dot{A}_1 P_{V_1}  \cdot A_2'' \cdot P_{V_2} \right\rVert^2  \\
& \overset{\eqref{eq:ineqs_svd_trunc2}}{\geq} \frac{r_2}{r_2'} \left\lVert S_1 V_1^\top  \cdot A_2'' \right\rVert^2 = \frac{r_2}{r_2'} \left\lVert \dot{A}_1 P_{V_1} \right\rVert^2.
\end{align*}
In the third iteration, we can apply this principle again to obtain
\begin{align*}
 \lVert \dot{A}_1 P_{V_1} \cdot A_2'' \cdot P_{V_2} \cdot A_3'' \cdot P_{V_3} \rVert^2  \overset{\eqref{eq:ineqs_svd_trunc2}}{\geq} \frac{r_3}{r_3'} \lVert \dot{A}_1 P_{V_1} \cdot A_2'' \cdot P_{V_2} \rVert^2.
\end{align*}
We can do this recursively and combine the inequalities to obtain
\begin{align*}
\lVert \dot{A}_1 P_{V_1} \cdot A_2'' \cdot P_{V_2} \cdot A_3'' \cdots P_{V_{d-1}} A_d'' \rVert^2  \geq \frac{\prod_{i=2}^{d-1} r_i}{\prod_{i=2}^{d-1} r_i'} \lVert \dot{A}_1 P_{V_1} \cdot A_2'' \cdots A_d'' \rVert^2.
\end{align*}
And thus using \eqref{eq:ineq_PV1_hatX}, we obtain
\begin{align*}
\lVert \dot{A}_1 P_{V_1} \cdot A_2'' \cdot P_{V_2} \cdot A_3'' \cdots P_{V_{d-1}} A_d'' \rVert^2  = \lVert X \rVert^2   \geq \frac{\prod_{i=2}^{d-1} r_i}{\prod_{i=2}^{d-1} r_i'} \lVert \hat{X} \rVert^2,
\end{align*}
from which the angle condition follows.
\end{proof}

\subsubsection{Numerical rank}
To determine how much the rank should be decreased, the following \emph{numerical $\Delta$-rank} is defined, inspired by \cite{gao2022riemannian}.

\begin{definition}[Numerical matrix rank] \label{def:num_rank}
Given $\Delta \in [0, 1]$, the \emph{$\Delta$-rank} of $B \in \mathbb{R}^{m \times n}$ is defined as
\begin{equation*}
\rank_\Delta B :=
\left\{\begin{array}{ll}
0 & \text{if } B = 0\\
\min\left\{j \in \{1, \dots, \rank B\} \mid \frac{\sigma_j(B)-\sigma_{j+1}(B)}{\sigma_j(B)} \geq \Delta\right\} & \text{otherwise}
\end{array}\right.,
\end{equation*}
where as in \eqref{eq:num_rank_increase} $\sigma_{j}(B)$, for $j=1,\dots, \rank B$, denote the singular values of $B$ in decreasing order and $\sigma_{j+1}(B) = 0$ for $j = \rank B$. 
\end{definition}

For $\Delta:=0$, the $\Delta$-rank equals one and for $\Delta:=1$, it equals the standard rank. The following definition gives a generalization to tensors.

\begin{definition}[Numerical tensor rank] \label{def:num_rank_gen}
Given $\Delta \in [0, 1]$, the \emph{$\Delta$-rank} of $X \in \mathbb{R}^{n_1 \times \cdots \times n_d}$ is defined as
\begin{equation*}
\mathrm{rank}_{\Delta} \left(X\right) := \left( \mathrm{rank}_{\Delta} X^{<1>},~ \mathrm{rank}_{\Delta} X^{<2>},~ \dots,~ \mathrm{rank}_{\Delta} X^{<d-1>} \right),
\end{equation*}
where the $\Delta$-rank for matrices was defined in \Cref{def:num_rank}.
\end{definition}

Remark that the computation of the SVD of these unfoldings can be computationally expensive for high-dimensional tensors. However, when $X$ is already given as a TTD, the cost can be reduced significantly by using the terms $\dot{X}_i$. The $\Delta$-rank then becomes:
\begin{equation} \label{eq:num_rank_TTD}
\mathrm{rank}_{\Delta} \left( X \right) = \left( \mathrm{rank}_{\Delta} \dot{X}_1,~ \mathrm{rank}_{\Delta} \dot{X}_2^\mathrm{R},~ \dots,~ \mathrm{rank}_{\Delta} \dot{X}_{d-1}^\mathrm{R} \right).
\end{equation}
This is possible because
\begin{equation*}
X^{<i>} = \left( I_{n_i} \otimes \left( X_{1}' \cdots X_{i-1}' \right)^\mathrm{R} \right) \dot{X}_i^\mathrm{R} \left( X_{i+1}'' \cdots X_d'' \right)^\mathrm{L},
\end{equation*}
and thus the singular values of $X^{<i>}$ and $\dot{X}_i^\mathrm{R}$ are the same.
\subsubsection{Experiment}

An illustration of the angle condition in \Cref{thm:angle_cond_rank_red_gen} is shown in \Cref{fig:test_angle_cond_d4_rAi4_ri2_TT-rounding_ni10_A_TTeMPS_randn}. For this experiment, $A$ is generated as in \eqref{eq:A_randn_d} with $d:=4$, $n_i:=10$, for $i=1,\dots,4$, and $r_i':=4$, for $i=1,\dots, 3$. The rank of the lower-rank set was chosen as $r_i:=2$, for $i=1,\dots,3$. We compute the value
\begin{equation} \label{eq:inner_prod_angle_TT2}
\Bigg\langle \frac{\tilde{X}}{\|\tilde{X}\|}, \frac{A}{\|A\|} \Bigg\rangle,
\end{equation}
for the different random tensors $A$ and where $\tilde{X}$ is obtained with \Cref{alg:TT_rounding} because the value $\lVert \mathcal{P}_{\mathbb{R}_{(r_1, \dots, r_{d-1})}^{n_1 \times \cdots \times n_d}}A \rVert$ is not known. Remark that
\begin{equation*} 
\Bigg\langle \frac{\tilde{X}}{\|\tilde{X}\|}, \frac{A}{\lVert \mathcal{P}_{\mathbb{R}_{(r_1, \dots, r_{d-1})}^{n_1 \times \cdots \times n_d}}A \rVert} \Bigg\rangle \ge \Bigg\langle \frac{\tilde{X}}{\|\tilde{X}\|}, \frac{A}{\|A\|} \Bigg\rangle.
\end{equation*}
Thus, if \eqref{eq:inner_prod_angle_TT2} is larger than $\omega$ the angle condition is also satisfied.
In the left subfigure in \Cref{fig:test_angle_cond_d4_rAi4_ri2_TT-rounding_ni10_A_TTeMPS_randn}, the box plot for this experiment is shown and on the right, the value of \eqref{eq:inner_prod_angle_TT2} for each tensor $A$ and corresponding image of the approximate projection $\tilde{X}$. As can be seen, the value in \eqref{eq:inner_prod_angle_TT2} is always larger than $\omega$, which equals 0.5 and proves that the angle condition is satisfied for this experiment.

\begin{figure}[H]
\centering
\includegraphics[width= 0.75\linewidth]{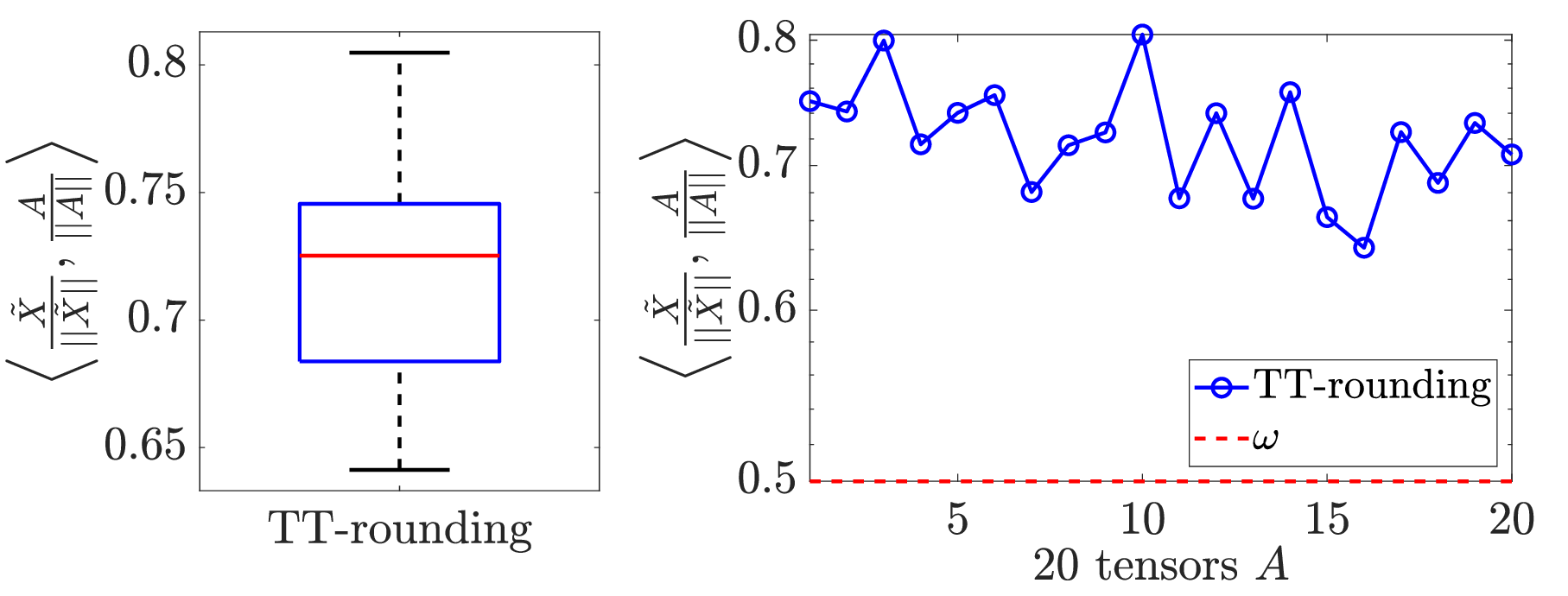}
\caption{Illustration of the angle condition of \Cref{alg:TT_rounding} for 20 randomly generated tensors $A$ as in \eqref{eq:A_randn_d} for $d:=4$, $n_i:=10$, for $i=1,\dots, 4$, $r_i':=4$ and $r_i:=2$, for $i=1,\dots,3$.}
\label{fig:test_angle_cond_d4_rAi4_ri2_TT-rounding_ni10_A_TTeMPS_randn}
\end{figure}

\section{Riemannian rank-adaptive method} \label{sec:RRAM_gen}
Now that all required elements are explained, the overall lay-out of the RRAM can be given and is shown in \Cref{alg:RRAM_ttcompl_dim_d}. The lay-out is inspired by the state-of-the-art methods for matrix and tensor completion \cite{gao2022riemannian,Zhou2016,Steinl_high_dim_TT_compl_2016}. The main contributions are the method to increase the rank using \Cref{alg:rank_incr_dim_d} on line 20 and the method to decrease the rank on line 14 with \Cref{alg:TT_rounding}.

\begin{algorithm}[H]
\setstretch{1.3}
\caption{$\mathrm{RRAM}^d_{\tilde{\mathcal{P}}}$ for higher-order tensor completion in the TT format.}
\label{alg:RRAM_ttcompl_dim_d}
\begin{algorithmic}[1] 
\Require
$\Omega, \Gamma, A_\Omega, A_\Gamma, X^{(0)}, \varepsilon_\Omega, \varepsilon_{\mathrm{R}} \in (0,1), \varepsilon_\Gamma \ge 0 , \Delta \in [0,1), j_{\max}, k_{\max} \in \mathbb{N}_0, r_{\max}, s_{\max} \in \mathbb{N}_0^{d-1}, \varepsilon > 0$
\State
$X_{\mathrm{new}} \gets X^{(0)}$;
\For{$k=1,\dots,k_{\max}$}
\State
$X^{(k)} \gets \mlin{Riem\_conjugate\_gradient} \left(X_{\mathrm{new}}, \varepsilon_\Omega,\varepsilon_{\mathrm{R}}, j_{\max} \right)$; \cite{Steinl_high_dim_TT_compl_2016}
\If{$\rho_{\Gamma} \left( X^{(k)} \right) > \varepsilon_\Gamma $}
\State $X^{(k)} \gets X^{(k-1)}$; 
\State \textbf{go to output}; 
\ElsIf{$\frac{\sqrt{2 f_\Omega (X^{(k)})}}{\lVert A_\Omega \rVert} < \varepsilon_\Omega$}
\State \textbf{go to output}; 
\EndIf
\If{$k< k_{\max}$}
\State $r \gets \rank X^{(k)}$; 
\State $r_\Delta \gets \rank X_\Delta$ \eqref{eq:num_rank_TTD};
\If{$\sum_{i=1}^{d-1} r_\Delta(i) < \sum_{i=1}^{d-1} r(i)$}
\State $X_\Delta \gets $ \Cref{alg:TT_rounding}$\left(X, r_\Delta \right)$;
\If{$f_\Omega \left( X_\Delta \right)< f_{\Omega} \left( X^{(k)} \right)$\Or $f_\Gamma \left( X_\Delta \right)< f_\Gamma \left( X^{(k)} \right)$}
\State $X_{\mathrm{new}} \gets X_\Delta$; 
\State \textbf{go to line 2 and continue for loop};
\EndIf 
\EndIf
\State
$X_{\mathrm{new}} \gets$ \Cref{alg:rank_incr_dim_d}$\left(\Omega,\Gamma,X^{(k)},r_{\max},s_{\max},\varepsilon \right)$;
\EndIf
\EndFor
\Ensure $X_k$
\end{algorithmic}
\end{algorithm}

The optimization on the smooth, fixed-rank manifold is displayed on line 3. We use a Riemannian conjugate gradient (CG) algorithm developed for tensor completion \cite{Steinl_high_dim_TT_compl_2016} for this purpose. This choice also allows us to make a good comparison with the state-of-the-art RRAM \cite{Steinl_high_dim_TT_compl_2016} in terms of rank adaptation. This RRAM and CG algorithm are both available in the \textsc{Manopt} toolbox \cite{manopt}.

\subsection{Overfitting}
As already briefly discussed in the introduction, overfitting is a common difficulty for the tensor completion problem \eqref{eq:min_completion_TT}. Overfitting occurs if $k$ is too high in at least one mode. In this case, $X$ can still approximate $A_{\Omega}$ well (small $f_\Omega(X)$), but $f(X)$ is large, assuming $A$ is known. That is why usually in the literature a test set $\Gamma$ is included \cite{Steinl_high_dim_TT_compl_2016}. The corresponding error function is $f_{\Gamma}(X)$. If 
\begin{equation} \label{eq:rho_Gamma}
\rho_\Gamma \left( X^{(k)} \right):= \frac{f_\Gamma \left( X^{(k)}\right)- f_\Gamma \left(X^{(k-1)}\right)}{f_\Gamma \left( X^{(k-1)}\right)}
\end{equation}
is larger than some positive constant $\varepsilon_{\Gamma}$ at iteration $k$, the RRAM is stopped and $X^{(k-1)}$ is considered as the best solution. The set $\Gamma$ is typically smaller than $\Omega$, e.g., $\lvert \Gamma \rvert = 0.25 \cdot \lvert \Omega \rvert$.

\subsection{Stopping criterion}

The inner Riemannian CG algorithm runs until one of the following conditions is satisfied:
\begin{itemize}\setlength\itemsep{1em} 
\item a maximal number of inner iterations $j_{\max}$ is reached,
\item the norm of the Riemannian gradient is smaller than $\varepsilon_{\mathrm{R}}$,
\item $\sqrt{2 f_\Omega (X^{(j)})}/\lVert A_\Omega \rVert$ is smaller than $\varepsilon_{\Omega}$,
\item $\frac{\sqrt{2 f_\Omega (X^{(j)})}-\sqrt{2 f_\Omega (X^{(j-1)})}}{\sqrt{2 f_\Omega (X^{(j)})}}$ is smaller than a certain tolerance, which is by default set to $10^{-8}$. We did not change this value.
\end{itemize} 
On the other hand, \Cref{alg:RRAM_ttcompl_dim_d} runs until
\begin{itemize}\setlength\itemsep{1em}
\item the maximal number of outer iterations $k_{\max}$ is reached,
\item $\sqrt{2 f_\Omega (X^{(k)})}/\lVert A_\Omega \rVert$ is smaller than $\varepsilon_{\Omega}$,
\item the relative error on the test set $\rho_\Gamma (X^{(k)})$ in \eqref{eq:rho_Gamma} has increased more than $\varepsilon_{\Gamma}$. 
\end{itemize} 

A convergence analysis is out of the scope of this paper but might be interesting to perform in future work.

\subsection{Experiments}

In this section, we compare the results that we obtain with the algorithm proposed in \Cref{alg:RRAM_ttcompl_dim_d} with the one from \cite{Steinl_high_dim_TT_compl_2016}. Both algorithms are denoted by $\mathrm{RRAM}^d_{\tilde{\mathcal{P}}}$ and $\mathrm{RRAM}_{\mathrm{randn}}$ respectively. In the first subsection $A$ is generated as in \eqref{eq:A_randn_d} of known low TT-rank and in the second subsection, $A$ is obtained by evaluating an exponential function in four variables in equidistant points.

\subsubsection{Synthetic data}
In this subsection, we generate $A$ and $X_0$ as in \eqref{eq:A_randn_d} for different values of $d$, $n$, and $r'$. The following input parameters for \Cref{alg:RRAM_ttcompl_dim_d} were chosen:
\begin{align*}
\varepsilon_{\Omega} &:= 10^{-8},& \varepsilon &:= 10^{-10}, & j_{\max} &:= 15,& s_{\max}(i)&:=8,\\
\varepsilon_{\mathrm{R}} &:=10^{-8},& \varepsilon_{\Gamma} &:=1,   & k_{\max} &:=15, & r_{\max}(i)&:=10, & \Delta &:=0.8,
\end{align*}
for $i=1,\dots,d-1$. Furthermore, $\mathrm{RRAM}_{\mathrm{randn}}$ needs as input an upper bound on the TT-rank. However, there is no possibility to use different values for each mode. Thus, we gave in all experiments in this section $\max_i(r_i')$ as input. This is in practice of course not possible when the rank is not known.

In the first experiment, we choose $d:=4$, $n_i:=20$, for $i=1,\dots,4$, $r_i':=4$, for $i=1,\dots,3$, and $\rho_\Omega := 0.1$. The samples are generated in {\Matlab} using \mlin{randperm} over all indices. The results are shown in \Cref{fig:comp_RRAM_rAi4_ni20_rho_omega_0_1_rng1_d4}. In the left subfigure, the convergence of the relative value of the cost function (w.r.t.\ the norm of $A_\Omega$) is shown for both algorithms. The x-axis here indicates the cumulative number of inner iterations of the CG algorithm. The middle subfigure shows the convergence of the relative value of the test function. Now the x-axis shows the time in seconds. The rightmost column of subfigures shows the evolution of the TT-rank. As explained in the introduction, the rank increase of $\mathrm{RRAM}_{\mathrm{randn}}$ is fixed and increases always by one. This algorithm thus needs 10 outer iterations or approximately five seconds to reach the rank of $A$ and obtain a numerically exact approximation. On the other hand, the proposed RRAM is able to detect the TT-rank of $A$ after the first outer iteration and converges to a numerically exact approximation in less than two seconds.

\begin{figure}[H]
\centering
\includegraphics[width= 0.8\linewidth]{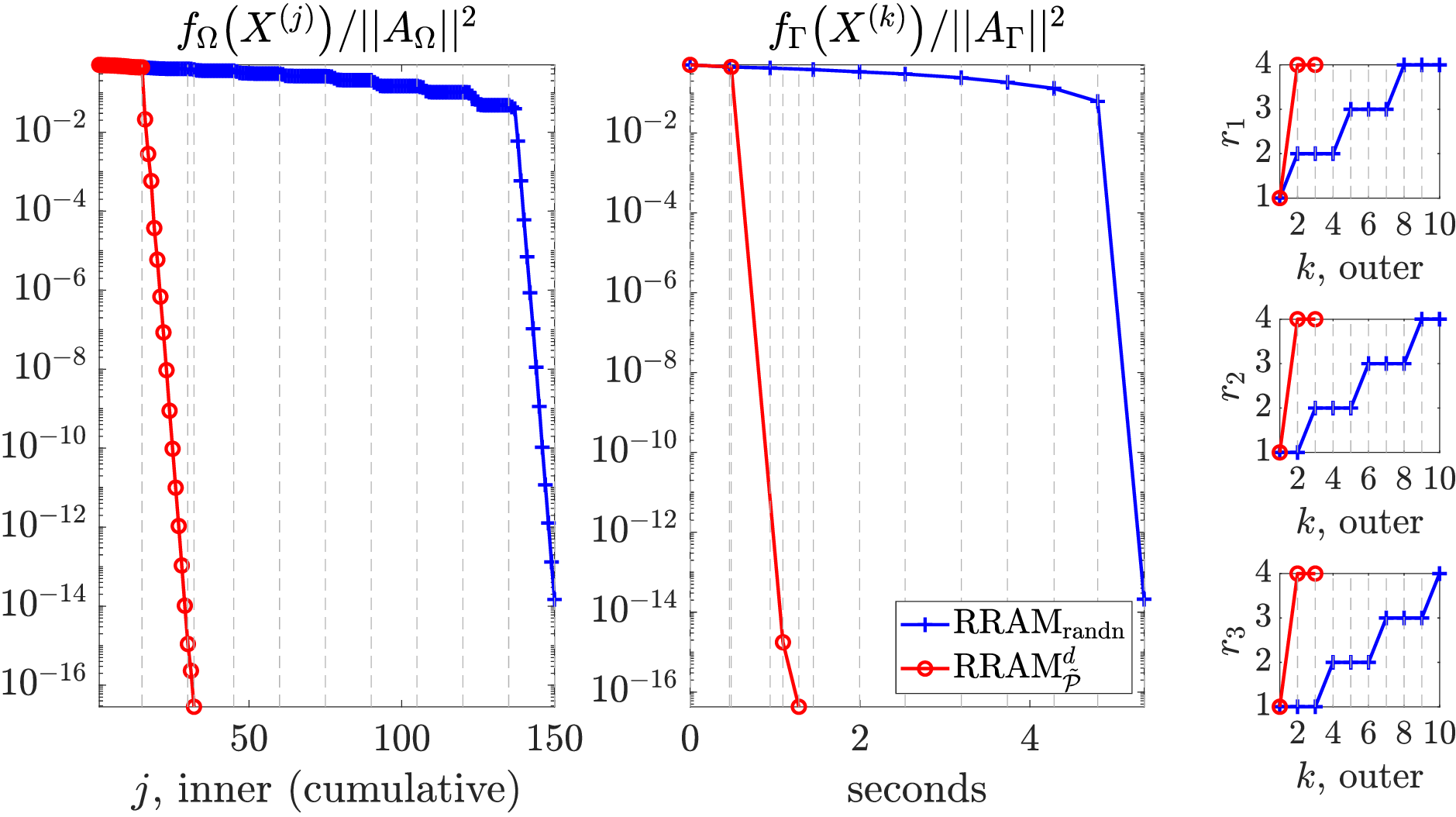}
\caption{Comparison of $\mathrm{RRAM}^d_{\tilde{\mathcal{P}}}$ proposed in \Cref{alg:RRAM_ttcompl_dim_d} and $\mathrm{RRAM}_{\mathrm{randn}}$ \cite{Steinl_high_dim_TT_compl_2016} for $d:=4$, $n_i:=20$, for $i=1,\dots,4$, $\rho_\Omega := 0.1$, $r_i':=4$, for $i=1,\dots,3$, and $A$ generated as in \eqref{eq:A_randn_d}.}
\label{fig:comp_RRAM_rAi4_ni20_rho_omega_0_1_rng1_d4}
\end{figure}

In a second experiment, we increase the order to $d:=5$. The other parameters remain the same. The results are shown in \Cref{fig:comp_RRAMs_rAi4_ni20_rhoOmega_0_1prodn_rng1_maxit15_d5}. As can be seen, due to the higher order $\mathrm{RRAM}_{\mathrm{randn}}$ needs approximately 100 seconds, 13 outer, and 200 inner iterations to converge to a numerically exact approximation whereas $\mathrm{RRAM}^d_{\tilde{\mathcal{P}}}$ is again able to detect the TT-rank of $A$ after one outer iteration and converges to a numerically exact approximation in approximately $10$ seconds and 25 inner iterations.

\begin{figure}[H]
\centering
\includegraphics[width= 0.8\linewidth]{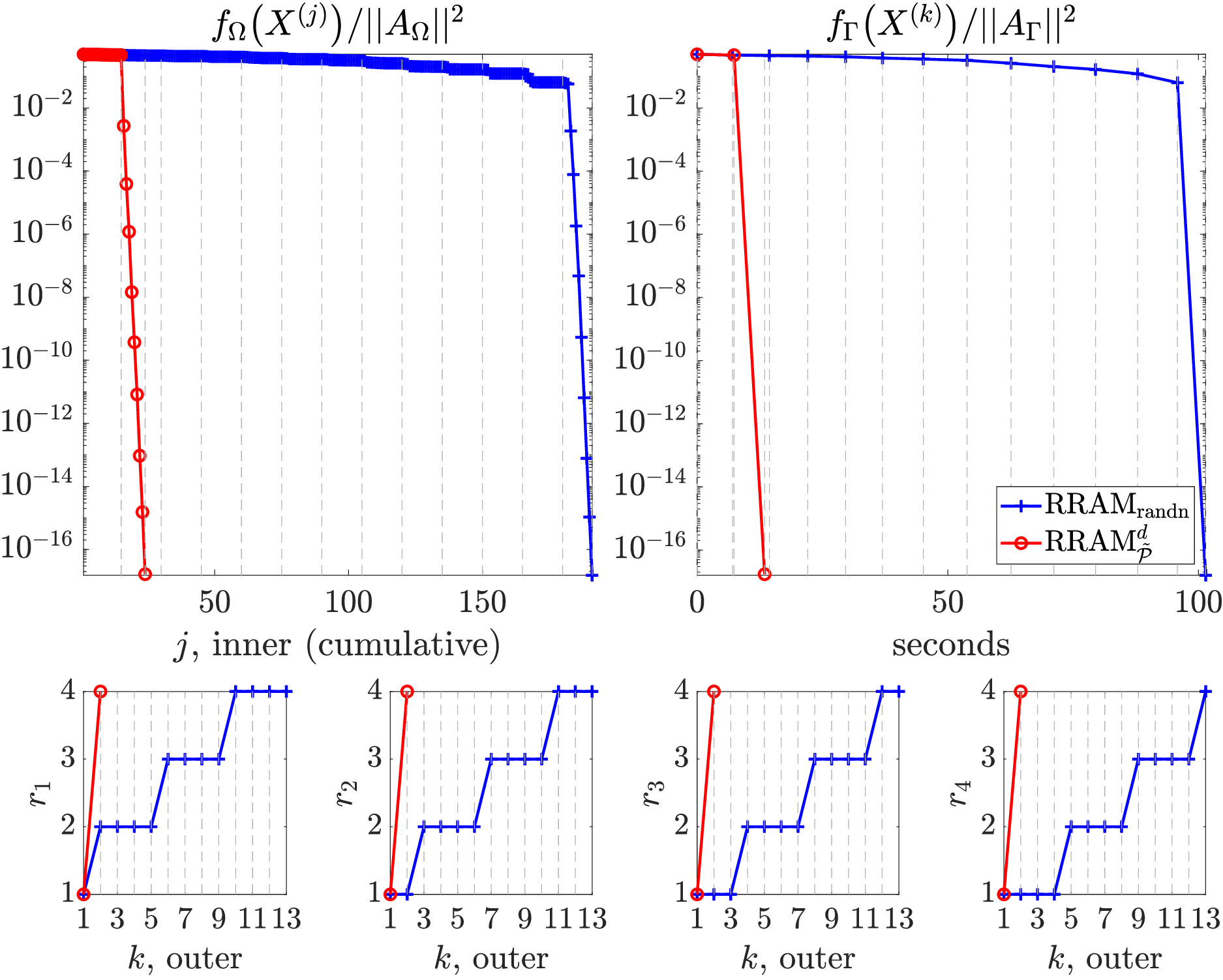}
\caption{Comparison of $\mathrm{RRAM}^d_{\tilde{\mathcal{P}}}$ proposed in \Cref{alg:RRAM_ttcompl_dim_d} and $\mathrm{RRAM}_{\mathrm{randn}}$ \cite{Steinl_high_dim_TT_compl_2016} for $d:=5$, $n_i:=20$, for $i=1,\dots,5$, $\rho_\Omega := 0.1$, $r_i':=4$, for $i=1,\dots,4$, and $A$ generated as in \eqref{eq:A_randn_d}.}
\label{fig:comp_RRAMs_rAi4_ni20_rhoOmega_0_1prodn_rng1_maxit15_d5}
\end{figure}

In a third experiment, noise is added to the tensor from the previous experiment as in \eqref{eq:A_noise} of size $10^{-1}$. The results are shown in \Cref{fig:comp_RRAMs_rAi4_ni20_rhoOmega_0_1prodn_rng1_maxit15_d5_noise_10-1}. Because of the noise, the most accurate value of the relative cost function that is obtained is approximately $2\cdot 10^{-5}$. After the first outer iteration, the estimated TT-rank is slightly too high but after the second outer iteration, the rank is reduced to the correct value. The tolerance on the cost function $\varepsilon_{\Omega}$ was lowered to $10^{-3}$. This value is however not reached and \Cref{alg:RRAM_ttcompl_dim_d} stops because the tolerance on the gradient, equal to $10^{-8}$, is reached. Still $\mathrm{RRAM}^d_{\tilde{\mathcal{P}}}$ obtains a solution of the same accuracy faster than $\mathrm{RRAM}_{\mathrm{randn}}$, both in number of iterations as computation time.

\begin{figure}[H]
\centering
\includegraphics[width= 0.8\linewidth]{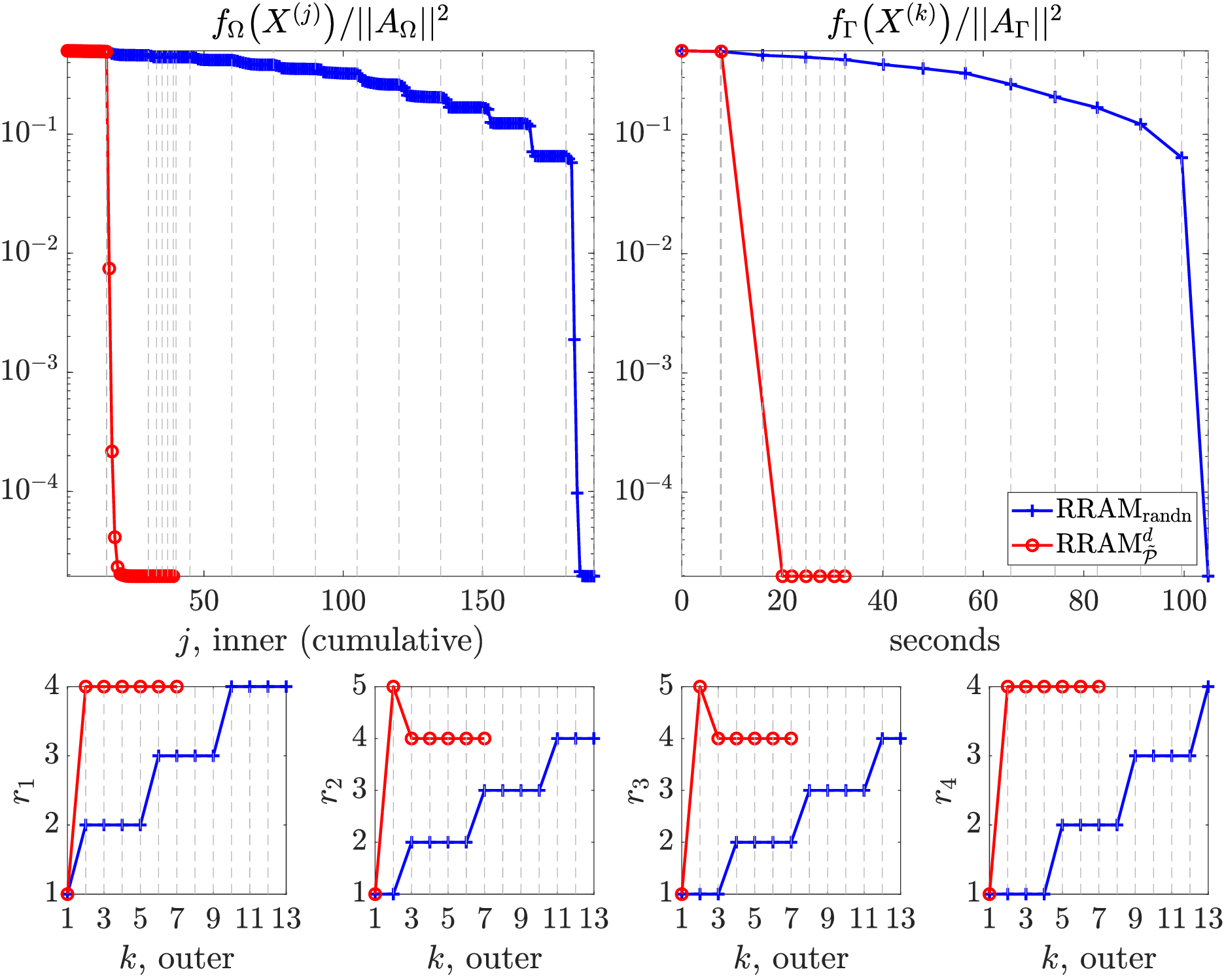}
\caption{Comparison of $\mathrm{RRAM}^d_{\tilde{\mathcal{P}}}$ proposed in \Cref{alg:RRAM_ttcompl_dim_d} and $\mathrm{RRAM}_{\mathrm{randn}}$ \cite{Steinl_high_dim_TT_compl_2016} for $d:=5$, $n_i:=20$, for $i=1,\dots,5$, $\rho_\Omega := 0.1$, $r_i':=4$, for $i=1,\dots,4$, and $A$ generated as in \eqref{eq:A_randn_d} with noise added of size $10^{-1}$ as in \eqref{eq:A_noise}.}
\label{fig:comp_RRAMs_rAi4_ni20_rhoOmega_0_1prodn_rng1_maxit15_d5_noise_10-1}
\end{figure}

In a last example, the order is further increased to $d:=6$ and the TT-rank of $A$ is chosen as $r:= [2,4,5,4,2]$. The size of $A$ in each dimension is lowered to 15. The results are shown in \Cref{fig:comp_RRAMs_rA24542_ni15_rhoOmega_0_1prodn_rng1_maxit15_d6}. Again, the proposed algorithm is significantly faster than the one from \cite{Steinl_high_dim_TT_compl_2016}, both in number of iterations and computation time. Furthermore, the rank is estimated correctly again after only one outer iteration.

\begin{figure}[H]
\centering
\includegraphics[width= 0.9\linewidth]{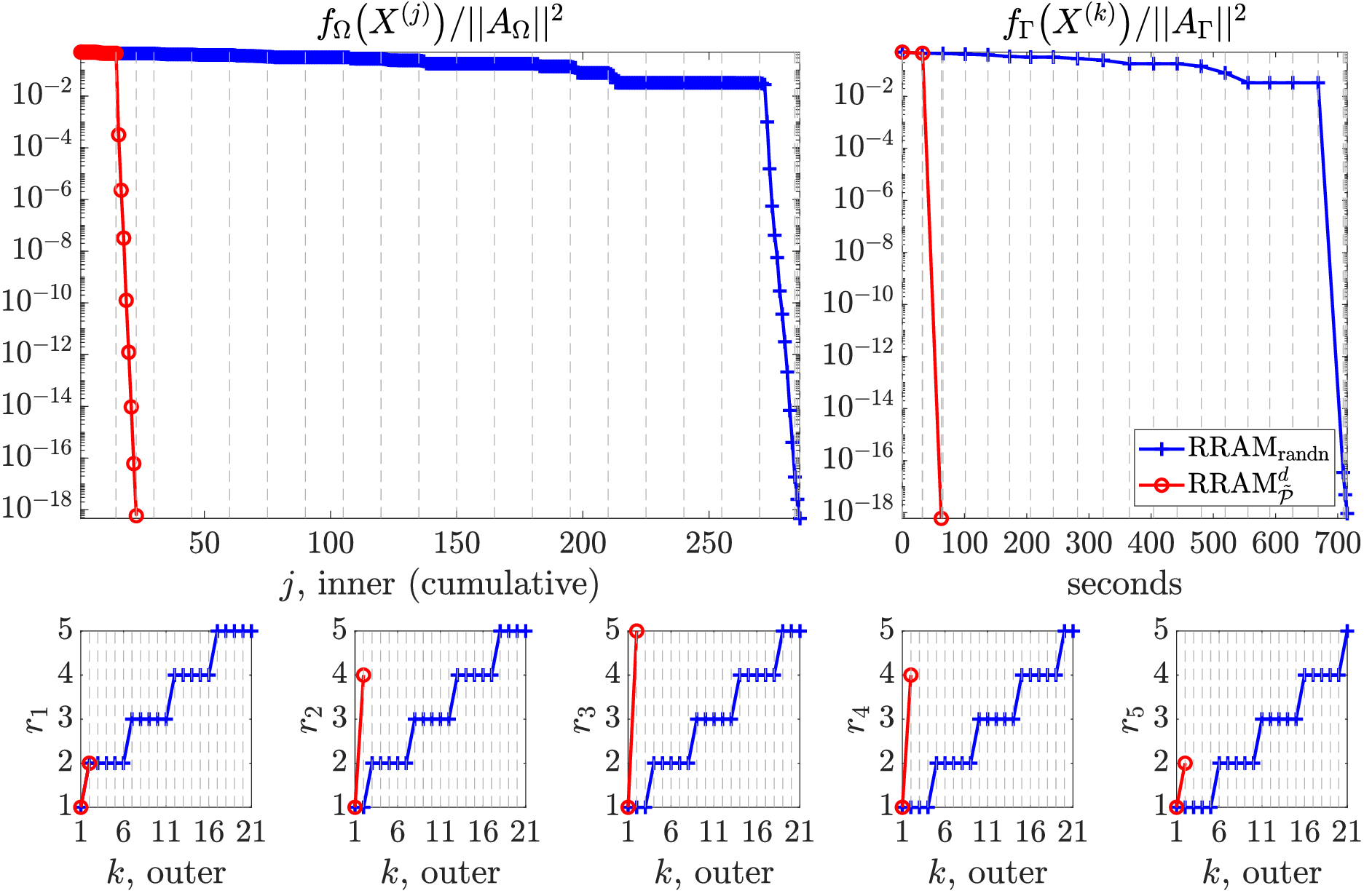}
\caption{Comparison of $\mathrm{RRAM}^d_{\tilde{\mathcal{P}}}$ proposed in \Cref{alg:RRAM_ttcompl_dim_d} and $\mathrm{RRAM}_{\mathrm{randn}}$ \cite{Steinl_high_dim_TT_compl_2016} for $d:=6$, $n_i:=15$, for $i=1,\dots,6$, $\rho_\Omega := 0.1$, $r':=[2,4,5,4,2]$, and $A$ generated as in \eqref{eq:A_randn_d}.}
\label{fig:comp_RRAMs_rA24542_ni15_rhoOmega_0_1prodn_rng1_maxit15_d6}
\end{figure}

\subsubsection{Function interpolation}
In this section, we reproduce one of the experiments in \cite{Steinl_high_dim_TT_compl_2016}. The following real function in four variables $x_1,x_2,x_3,x_4 \in [0,1]$ is considered:
\begin{equation*}
f(x_1,x_2,x_3,x_4) := \mathrm{exp}\left(-\left\lVert \begin{matrix}
x_1 & x_2 & x_3 & x_4
\end{matrix} \right\rVert\right).
\end{equation*}
The function is sampled in 20 uniformly distributed points in each variable interval to obtain the following tensor:
\begin{equation} \label{eq:A_exp}
A(i_1,i_2,i_3,i_4) := \mathrm{exp}\left(-\left\lVert \begin{matrix}
\frac{i_1-1}{19} & \frac{i_2-1}{19} & \frac{i_3-1}{19} & \frac{i_4-1}{19}
\end{matrix} \right\rVert\right),
\end{equation}
for $i_1,i_2,i_3,i_4=1,\dots,20$. The input parameters for \Cref{alg:RRAM_ttcompl_dim_d} were the same as in the previous subsection, except for $s_{\max}(i):=5$ and $r_{\max}(i):=5$, for $i=1,\dots,d-1$.
The maximal rank for $\mathrm{RRAM}_{\mathrm{randn}}$ was also set to 5 in each mode and $\rho_\Omega := 0.1$, both as in the experiment in \cite{Steinl_high_dim_TT_compl_2016}. The results are shown in \Cref{fig:comp_RRAMs_A_exp_ni20_rhoOmega_0_1prodn_rng1_maxit15_d4}. As can be seen, \Cref{alg:RRAM_ttcompl_dim_d} estimates the TT-rank after each outer iteration as 1 in each mode because the tensor samples an exponential function and consequently also the singular values of the unfoldings of $A$ are decaying exponentially as can be seen in \Cref{fig:svd_unfoldings_Aexp_d4}. The most accurate relative cost function that both methods are able to obtain is approximately $10^{-9}$. This is as expected as $A$ is only approximately of low rank. However, $\mathrm{RRAM}^d_{\tilde{\mathcal{P}}}$ is still able to reach the same accuracy in cost and test function faster both in terms of iterations as computation time.

\begin{figure}[H]
\centering
\includegraphics[width= 0.8\textwidth]{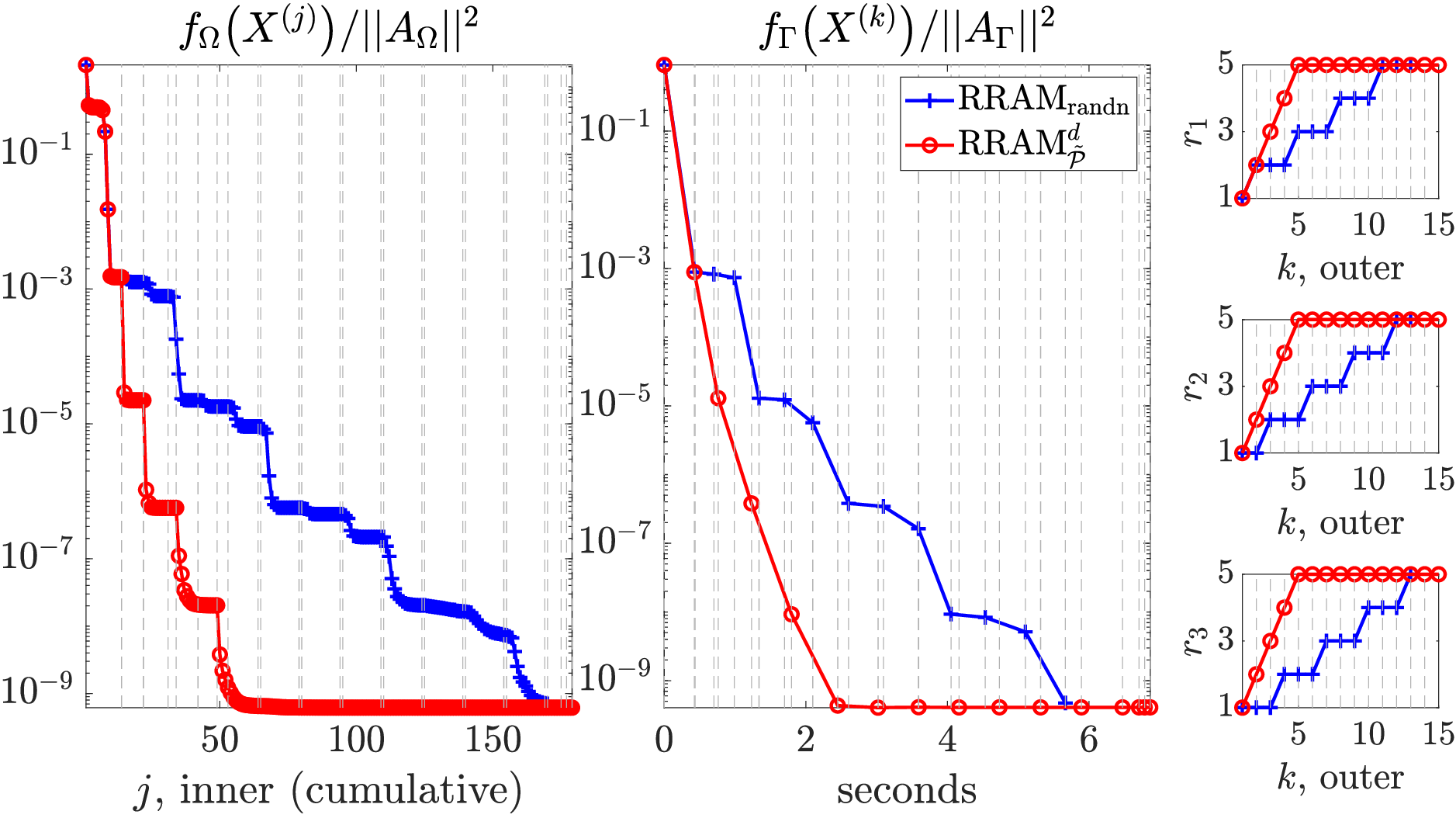}
\caption{Comparison of $\mathrm{RRAM}^d_{\tilde{\mathcal{P}}}$ proposed in \Cref{alg:RRAM_ttcompl_dim_d} and $\mathrm{RRAM}_{\mathrm{randn}}$ \cite{Steinl_high_dim_TT_compl_2016} for $d:=4$, $n_i:=20$, for $i=1,\dots,4$, $\rho_\Omega := 0.1$, and $A$ generated as in \eqref{eq:A_exp}.}
\label{fig:comp_RRAMs_A_exp_ni20_rhoOmega_0_1prodn_rng1_maxit15_d4}
\end{figure}

\begin{figure}[H]
\centering
\includegraphics[width= 0.6\textwidth]{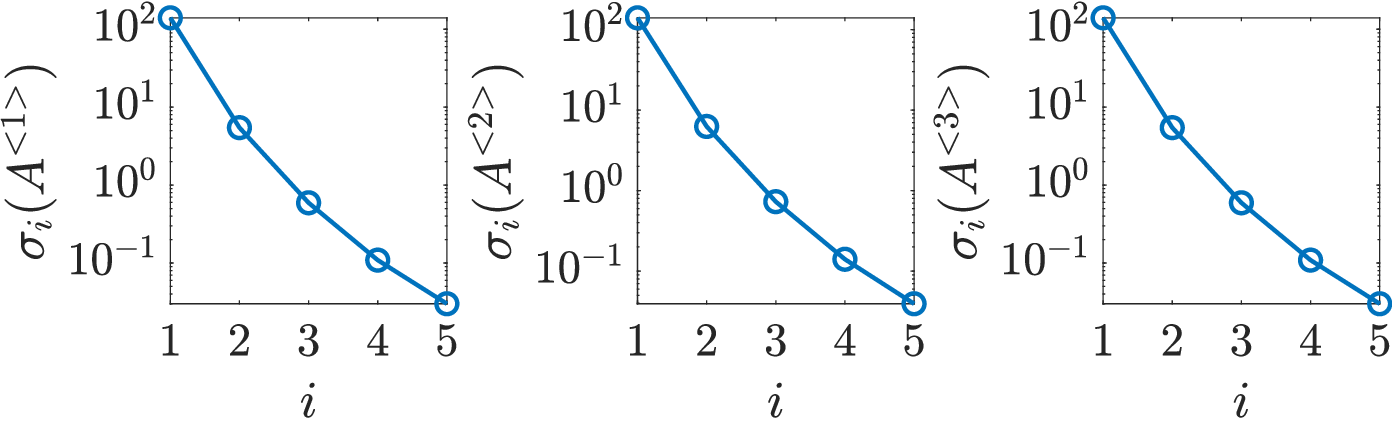}
\caption{The first 5 singular values of the unfoldings of $A$ generated as in \eqref{eq:A_exp}.}
\label{fig:svd_unfoldings_Aexp_d4}
\end{figure}

\section{Conclusions}\label{sec5}

A Riemannian rank-adaptive method is proposed for the tensor completion problem in the low-rank tensor-train format which improves the state-of-the-art RRAM by including a method to increase the rank based on successive projections of the negative gradient onto subcones in the normal part of the tangent cone to the variety of bounded tensor-train rank tensors. Furthermore, a rank estimation method is included to estimate a good value for the amount of rank increase. When the tensor to complete is of exact low-rank, the method is able to retrieve this rank. Additionally, when the algorithm converges to an element of a lower-rank set, the rank is reduced based on the TT-rounding algorithm \cite{Oseledets2011}, which can be considered as an approximate projection onto the lower-rank set and is proven to satisfy a certain angle condition to ensure that the image is sufficiently close to one of the true projection. Several numerical experiments with synthetic tensors and a tensor obtained from function evaluations were given. In all experiments, the proposed RRAM was able to recover the low-rank tensor or obtain a good approximation of the full rank tensor faster than the state-of-the-art RRAM, both in terms of iterations and computation time.
\bibliographystyle{acm}
\bibliography{references}
\section*{Data availability}

Our implementation of the RRAM -- \Cref{alg:RRAM_ttcompl_dim_d} -- and the scripts to regenerate the experiments are publicly available \footnote{\url{https://github.com/CharlotteVermeylen/RRAM_TT_completion}}.
\appendix

\section{Orthogonal projections\label{app1.1a}}

Projections onto vector spaces are frequently used in this work. More specifically, the proof of the angle condition in \Cref{thm:angle_cond_rank_red_gen} relies on the following basic result.

\begin{lemma}
Let $A \in \mathbb{R}^{n \times m}$ have rank $r$. If $\hat{A} = \hat{U} \hat{S} \hat{V}^\top$ is a truncated SVD of rank $s$ of $A$, with $s<r$, then, for all $U \in \Stiefel(s, n)$ and all $V \in \Stiefel(s, m)$,
\begin{align}
\label{eq:ineqs_svd_trunc1}
\left\|P_{\hat{U}} A \right\| \ge \left\|P_U A \right\|,&&
\left\|P_{\hat{U}} A \right\|^2 \ge \frac{s}{r} \|A\|^2,\\ \label{eq:ineqs_svd_trunc2}
\left\lVert A P_{\hat{V}} \right\rVert \ge \left\|A P_V \right\|,&&
\left\|A P_{\hat{V}} \right\|^2 \ge \frac{s}{r} \|A\|^2.
\end{align}
\end{lemma}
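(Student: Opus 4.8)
The plan is to express every quantity in the singular-value basis of $A$ and reduce the two genuinely non-trivial inequalities to a standard extremal principle for symmetric matrices. First I would write a thin SVD $A = \sum_{i=1}^r \sigma_i u_i v_i^\top$ with $\sigma_1 \ge \cdots \ge \sigma_r > 0$, so that $\hat{U} = [\,u_1 \cdots u_s\,]$ and $\hat{V} = [\,v_1 \cdots v_s\,]$. Since the $u_i$ are orthonormal, $P_{\hat{U}} A = \hat{U}\hat{U}^\top A = \sum_{i=1}^s \sigma_i u_i v_i^\top$, whence $\|P_{\hat{U}} A\|^2 = \sum_{i=1}^s \sigma_i^2$; symmetrically $\|A P_{\hat{V}}\|^2 = \sum_{i=1}^s \sigma_i^2$, and $\|A\|^2 = \sum_{i=1}^r \sigma_i^2$. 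These three identities reduce the whole lemma to two scalar facts about the squared singular values.

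For the first inequality in each line I would rewrite $\|P_U A\|^2 = \trace(U^\top A A^\top U)$, using $P_U = UU^\top$ and cyclicity of the trace. The matrix $M := AA^\top$ is symmetric positive semidefinite with eigenvalues $\sigma_1^2 \ge \cdots \ge \sigma_r^2 \ge 0$ (the remaining eigenvalues being zero). By the Ky Fan maximum principle (equivalently, the Rayleigh--Ritz characterization of partial eigenvalue sums), the maximum of $\trace(U^\top M U)$ over $U \in \Stiefel(s,n)$ equals the sum of the $s$ largest eigenvalues of $M$, namely $\sum_{i=1}^s \sigma_i^2$, and is attained at $U = \hat{U}$. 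This gives $\|P_U A\|^2 \le \|P_{\hat{U}} A\|^2$ for all $U \in \Stiefel(s,n)$, the first inequality in \eqref{eq:ineqs_svd_trunc1}. The inequality $\|A P_{\hat{V}}\| \ge \|A P_V\|$ in \eqref{eq:ineqs_svd_trunc2} then follows by the identical argument applied to $A^\top$, via $\|A P_V\|^2 = \trace(V^\top A^\top A V)$ and the eigenvalues $\sigma_i^2$ of $A^\top A$.

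The second inequality in each line is elementary. Because the singular values are ordered, the average of the largest $s$ squared singular values dominates the average of all $r$:
\[
\frac{1}{s}\sum_{i=1}^s \sigma_i^2 \;\ge\; \frac{1}{r}\sum_{i=1}^r \sigma_i^2 .
\]
Multiplying by $s$ and inserting the identities from the first paragraph yields $\|P_{\hat{U}} A\|^2 = \sum_{i=1}^s \sigma_i^2 \ge \frac{s}{r}\|A\|^2$, and identically for $\|A P_{\hat{V}}\|^2$.

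The only step that is not a one-line computation is the Ky Fan maximum principle invoked for the first inequality in each line; everything else is immediate once the quantities are written in the singular-value basis. I would either cite this extremal principle directly or, for a self-contained argument, bound $\trace(U^\top M U)$ by expanding $U^\top M U$ in the eigenbasis of $M$ and recognizing the result as a convex combination of the eigenvalues $\sigma_i^2$ whose weights sum to at most $s$, each weight lying in $[0,1]$; maximizing such a combination places all weight on the $s$ largest eigenvalues, giving $\sum_{i=1}^s \sigma_i^2$.
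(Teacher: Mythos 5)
Your proof is correct, but it follows a genuinely different route from the paper's. For the first inequality in each line, the paper invokes the Eckart--Young theorem together with the closed-cone projection characterization $\langle \hat{A}, A\rangle = \lVert \hat{A}\rVert^2$: it observes that every $P_U A$ with $U \in \Stiefel(s,n)$ satisfies this same condition and has rank at most $s$, hence lies in the feasible set of the maximization problem that $\hat{A} = P_{\hat{U}}A$ solves. You instead reduce $\lVert P_U A\rVert^2$ to $\trace(U^\top AA^\top U)$ and apply the Ky Fan maximum principle; this is equally valid and more classical, and your sketched self-contained argument (weights $w_j = e_j^\top P_U e_j \in [0,1]$ summing to $s$, so the convex combination of the $\sigma_j^2$ is maximized on the top $s$) is sound. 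For the $\tfrac{s}{r}$ bound, the paper runs a two-case analysis on whether $\sigma_s^2$ exceeds the mean of all $r$ squared singular values, which is really just an inline proof of the fact you cite directly, namely that the average of the $s$ largest terms of a decreasing sequence dominates the average of all $r$ terms; your version is the cleaner statement of the same content. What the paper's approach buys is thematic consistency -- it reuses the cone-projection machinery of \eqref{eq:nec_cond_approx_proj} that drives the rest of Section 3.2 -- whereas yours buys self-containedness and avoids appealing to Eckart--Young at the cost of citing (or proving) the Ky Fan extremal principle. Either is acceptable.
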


\begin{proof}
By the Eckart--Young theorem, $\hat{A}$ is a projection of $A$ onto 
$$\mathbb{R}_{\le s}^{n \times m} := \left\{X \in \mathbb{R}^{n \times m} \bigm\vert \rank(X) \le s \right\}.$$ 
Thus, since $\mathbb{R}_{\le s}^{n \times m}$ is a closed cone, \eqref{eq:nec_cond_approx_proj} holds. Moreover, since $\hat{S} \hat{V}^\top = \hat{U}^\top A$ and thus $\hat{A} = \hat{U} \hat{U}^\top A = P_{\hat{U}} A$, it holds that
\begin{equation*}
\lVert P_{\hat{U}} A \rVert^2 = \max \left\{\lVert A_1 \rVert^2 \bigm\vert A_1 \in \mathbb{R}_{\le s}^{n \times m},\, \langle A_1, A\rangle = \lVert A_1 \rVert^2\right\}.
\end{equation*}
Furthermore, for all $U \in \Stiefel(s, n)$,
$\left\langle P_{{U}}A, A \right\rangle = \left\langle P_{{U}} A, P_{{U}} A + P_{{U}}^\perp A \right\rangle = \left\lVert P_{{U}} A \right\rVert^2.$
Hence,
\begin{equation*}
\left\{ P_{U} A \mid U \in \Stiefel(s, n) \right\} \subseteq \left\{\lVert A_1 \rVert^2 \bigm\vert A_1 \in \mathbb{R}_{\le s}^{n \times m},\, \langle A_1, A\rangle = \lVert A_1 \rVert^2\right\}.
\end{equation*}
Thus,
$\lVert P_{\hat{U}} A \rVert^2 = \underset{U \in \Stiefel(s, n)}{\max} \lVert P_{U} A \rVert^2.$
The left inequality in \eqref{eq:ineqs_svd_trunc1} follows, and the one in \eqref{eq:ineqs_svd_trunc2} can be obtained similarly.

By orthogonal invariance of the Frobenius norm and by definition of $\hat{A}$,
\begin{align*}
\lVert A \rVert^2
= \sum_{i=1}^r \sigma_i^2,&&
\lVert \hat{A} \rVert^2
= \sum_{i=1}^s \sigma_i^2,
\end{align*}
where $\sigma_1, \dots, \sigma_r$ are the singular values of $A$ in decreasing order.
Moreover, either $\sigma_s^2 \ge \frac{1}{r} \sum_{i=1}^r \sigma_i^2$ or $\sigma_s^2 < \frac{1}{r} \sum_{i=1}^r \sigma_i^2$.
In the first case, we have
\begin{equation*}
\lVert \hat{A} \rVert^2
= \sum_{i=1}^s \sigma_i^2
\ge s \sigma_s^2
\ge s \frac{\sum_{i=1}^r \sigma_i^2}{r}
= \frac{s}{r} \lVert A \rVert^2.
\end{equation*}
In the second case, we have
\begin{align*}
\lVert \hat{A} \rVert^2
= \sum_{i=1}^r \sigma_i^2 - \sum_{i=s+1}^r \sigma_i^2
&\ge \lVert A \rVert^2 - (r-s) \sigma_s^2 > \lVert A \rVert^2 - (r-s) \frac{\sum_{i=1}^r \sigma_i^2}{r}
= \frac{s}{r} \lVert A \rVert^2.
\end{align*}
Thus, in both cases, the second inequality in \eqref{eq:ineqs_svd_trunc1} holds. The second inequality in \eqref{eq:ineqs_svd_trunc2} can be obtained in a similar way.
\end{proof}

\end{document}